\newtheorem{theorem}{Theorem}[section]
\newtheorem{lemma}[theorem]{Lemma}
\newtheorem{corollary}[theorem]{Corollary}
\newtheorem{conjecture}[theorem]{Conjecture}
\newtheorem{proposition}[theorem]{Proposition}
\theoremstyle{definition}
\newtheorem{example}[theorem]{Example}
\newtheorem{remark}[theorem]{Remark}
\DeclareMathOperator{\im}{Im}
\DeclareMathOperator{\sign}{sign}
\DeclareMathOperator{\PT}{PT}
\DeclareMathOperator{\inc}{inc}
\DeclareMathOperator{\sh}{sh}
\DeclareMathOperator{\iv}{inv}
\DeclareMathOperator{\heigt}{ht}
\newcommand\numberthis{\addtocounter{equation}{1}\tag{\theequation}}
\def\inv{^{-1}}
\newlength{\cellsize}
\newenvironment{Tableau}[1]{%
  \tikzpicture[scale=.6,draw/.append style={thick,black},
                      baseline=(current bounding box.center)]
    \tableauRow=-1.5
    \foreach \Row in {#1} {
       \tableauCol=0.5
       \foreach\k in \Row {
         \draw[thin](\the\tableauCol,\the\tableauRow)rectangle++(1,1);
         \draw[thin](\the\tableauCol,\the\tableauRow)+(0.5,0.5)node{$\k$};
         \global\advance\tableauCol by 1
       }
       \global\advance\tableauRow by -1
    }
}{\endtikzpicture}
\begin{document}
\author[Wang]{Shiyun Wang}
\address{Shiyun Wang: University of Southern California, Los Angeles, CA 90089-2532, USA}
\email{shiyunwa@usc.edu}
\title{The $e$-positivity of the chromatic symmetric functions and the inverse Kostka matrix}
\date{\today}

\maketitle

\begin{abstract}
We expand the chromatic symmetric functions for Dyck paths of bounce number three in the elementary symmetric function basis using a combinatorial interpretation of the inverse of the Kostka matrix studied in E\~{g}ecio\~{g}lu-Remmel (1990). We prove that certain coefficients in this expansion are positive. We establish the $e$-positivity of an extended class of chromatic symmetric functions for Dyck paths of bounce number three beyond the "hook-shape" case of Cho-Huh (2019).
\end{abstract}

\section{Introduction}
Chromatic symmetric functions were first introduced by Stanley \cite{STANLEY1995166} and have grown to be a fruitful area of research in relation to enumerative combinatorics, algebraic geometry, and representation theory. They can form numerous bases for the algebra of symmetric functions, $\Lambda= \oplus_n \Lambda^n$, and have a close relationship with Hessenberg varieties. Chromatic symmetric functions, as an algebraic tool to study graphs, are defined as follows:

Let $G$ be a finite graph with $V(G)=\{v_1, v_2, \cdots, v_n\}$ and $\mathbb{P}=\{1,2,3,\cdots\}$. The chromatic symmetric function $X_G(\mathbf{x})$ associated with $G$ is \[X_G(\mathbf{x})=X_G(x_1,x_2,\cdots)=\sum_{\kappa}x_{\kappa(v_1)}x_{\kappa(v_2)}\cdots x_{\kappa(v_n)},\] where the sum ranges over all proper colorings $\kappa: V \rightarrow \mathbb{P} $. 

Let $\lambda$ be the partition of the number of vertices of $G$, denoted by $\lambda\vdash |V|$. Let $l(\lambda)$ be the length of $\lambda$. Stanley \cite{STANLEY1995166} considered the expansion of $X_G(\mathbf{x})$ into various symmetric function bases. For example, the expansion of $X_G(\mathbf{x})$ in terms of the monomial symmetric functions $m_\lambda$ can be considered as a generating function for stable partitions into independent sets of the vertices of $G$. The coefficients of $X_G(\mathbf{x})$ in the expansion of the power sum symmetric functions $p_\lambda$ are related to the Mobius function of the lattice of contractions of $G$. If the chromatic symmetric function is expanded into the elementary symmetric function $e_\lambda$ as $X_G(\mathbf{x})=\sum_\lambda c_\lambda e_\lambda$, then the summation of the coefficients $\sum_{\lambda\vdash |V|, l(\lambda)=j}c_\lambda$ is the number of acyclic orientations of $G$ with $j$ sinks. Further, if $X_G(\mathbf{x})$ can be written as a nonnegative linear combination of a basis $b$, then we say $X_G(\mathbf{x})$ is $b$-positive. Since $X_G(\mathbf{x})$ is $m$-positive and the image of $X_G(\mathbf{x})$ is $p$-positive under the automorphism $\omega$ in symmetric functions, it is natural to explore the combinatorial interpretation of the coefficients of $e_\lambda$. 

Let $P$ be a finite poset and $\inc(P)$ be its incomparability graph, that is, the graph $G$ with vertices the elements of $P$ where two vertices are connected by an edge if and only if the elements are incomparable in $P$. We say $P$ is $(\bf{a}+\bf{b})$-free if $P$ does not contain an induced subposet isomorphic to a disjoint union of an $\bf{a}$-chain and a $\bf{b}$-chain. The famous Stanley-Stembridge conjecture is the following.

\begin{conjecture}\cite{STANLEY1995166}\cite{STANLEY1993261}\label{3+1epositive}
If $P$ is $(\bf{3}+\bf{1})$-free, then $X_{\inc(P)}$ is $e$-positive.
\end{conjecture}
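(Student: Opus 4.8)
The plan is to attack Conjecture \ref{3+1epositive} in two stages: first reduce the poset statement to a purely combinatorial statement about Dyck paths, and then prove $e$-positivity for every such path by extracting a signed formula for the $e$-coefficients and cancelling the negative terms by an involution. The opening move---standard in this area and not itself the obstacle---is Guay-Paquet's reduction of the $(\mathbf{3}+\mathbf{1})$-free case to \emph{unit interval orders} (posets that are simultaneously $(\mathbf{3}+\mathbf{1})$-free and $(\mathbf{2}+\mathbf{2})$-free). Natural unit interval orders correspond bijectively to Dyck paths, and under this correspondence $X_{\inc(P)}$ becomes the chromatic symmetric function attached to a Dyck path. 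Hence it suffices to prove that $X_{\inc(P)}$ is $e$-positive for every Dyck path, with the bounce number ranging over all positive integers; the present paper settles bounce number three, and the goal is to lift the argument to arbitrary bounce number.

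Next, starting from the manifestly nonnegative monomial expansion $X_{\inc(P)}=\sum_\mu a_\mu m_\mu$, where $a_\mu$ enumerates the stable partitions of $\inc(P)$ of type $\mu$, I would convert to the elementary basis through the transition governed by the inverse Kostka matrix of E\~{g}ecio\~{g}lu-Remmel. Because the entries of $K^{-1}$ carry signs and admit a combinatorial model in terms of special rim-hook tableaux, this produces a \emph{signed} formula $c_\lambda=\sum_{T}(-1)^{\sign(T)}\,w(T)$ for the coefficient of $e_\lambda$, in which each object $T$ couples a coloring datum of $\inc(P)$ with a rim-hook tableau determined by $\lambda$. Proving $c_\lambda\ge 0$ then becomes the task of exhibiting a sign-reversing, weight-preserving involution on these objects whose fixed points all carry positive sign.

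The crux is to construct this involution uniformly in the bounce number. For bounce number three the underlying Dyck path has shallow nesting, and the rim hooks meet only a few bounce levels, so the cancellations can be organized explicitly---this is exactly what makes the present theorem tractable. In general I would induct on the bounce number, decomposing a Dyck path into its topmost maximal staircase together with a sub-path of strictly smaller bounce number, and attempt to define the involution compatibly with this cut. The hard part, and the reason the full Stanley-Stembridge conjecture remains open, is that a single rim hook in the inverse-Kostka model can span several bounce levels at once, so the sign data and the color-multiplicity data do not decouple along the recursive decomposition; controlling their joint behavior for arbitrarily deep nesting is precisely the obstruction I expect to dominate the difficulty.
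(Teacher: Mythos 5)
There is a genuine gap, and it is the whole proof. The statement you were given is Conjecture \ref{3+1epositive} --- the Stanley--Stembridge conjecture --- which is \emph{open}: the paper does not prove it and only establishes special cases (Theorems \ref{e_{(n-2l,l,l)}} and \ref{e_{(n-2l-1,l+1,l)}} for certain coefficients at bounce number three, and Theorem \ref{epos1} for a restricted family of Dyck paths). Your proposal is likewise not a proof. Its first stage (Guay-Paquet's reduction to natural unit interval orders, hence to Dyck paths) is correct and is exactly the paper's setup, but everything after that is a program rather than an argument: the entire mathematical content would reside in the sign-reversing, weight-preserving involution ``uniform in the bounce number,'' which you do not construct, and which you yourself concede is the open obstruction. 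Nothing in your text shows that the negative terms cancel even for bounce number four, and the paper's Section 5 records that these involution methods have so far failed even for the remaining coefficients at bounce number three (the shapes $3^l2^j1^{n-3l-2j}$ with general $j$). Announcing an induction on bounce number without a mechanism that makes the inductive step go through does not close this gap.

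There is also a concrete technical error in your second stage. The E\~{g}ecio\~{g}lu--Remmel inverse Kostka matrix governs the transition from the \emph{Schur} basis to the elementary basis, via $s_{\mu}=\sum_{\lambda}K\inv_{\lambda,\mu'}e_{\lambda}$ (Equation \eqref{inv K exp} in the paper); it does not convert the monomial expansion $X_{\inc(P)}=\sum_\mu a_\mu m_\mu$ into the $e$-basis, since the $m$-to-$e$ transition is controlled by a different signed matrix. Consequently your proposed signed formula coupling ``coloring data'' (stable partitions) with special rim hook tabloids is not what the inverse Kostka model produces. The paper instead starts from the Gasharov/Shareshian--Wachs expansion $X_G(\mathbf{x},q)=\sum_{T\in\PT(G)}q^{\iv(T)}s_{\sh(T)}$ (Theorem \ref{quasi shur exp}), so the objects to be cancelled are pairs consisting of a $P$-tableau and a special rim hook tabloid, and the involutions act on $P$-tableaux by insertion moves that preserve the inversion statistic. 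If you repaired your stage two by substituting the $P$-tableau expansion for the monomial one, your plan would coincide with the paper's framework --- but the central involution for arbitrary bounce number would still be missing, so the conjecture would remain exactly as open as before.
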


Some critical results related to this conjecture have been discovered. Gasharov \cite{GASHAROV1996193} proved the incomparability graphs of $(\bf{3}+\bf{1})$-free posets are Schur-positive. In particular, $X_G(\mathbf{x})$ can be expanded as $X_G(\mathbf{x})=\sum_\lambda c_\lambda s_\lambda$, where $c_\lambda$ is the number of $P$-tableaux of shape $\lambda$ and $s_\lambda$ represents the Schur basis of symmetric functions. Since each $e_\lambda$ expands positively into the Schur basis, the Schur-positivity of $X_G(\mathbf{x})$ leads to more confidence in Conjecture \ref{3+1epositive}. Guay-Paquet \cite{guaypaquet2013modular} reduced the conditions in Conjecture \ref{3+1epositive} so that it is enough to prove the $e$-positivity of the incomparability graphs of natural unit interval orders, that is, posets which are both $(\bf{3}+\bf{1})$-free and $(\bf{2}+\bf{2})$-free.

Shareshian-Wachs \cite{SHARESHIAN2016497} introduced a quasisymmetric refinement of Stanley's chromatic symmetric function with a variable $q$ counting ascents in the colorings. For $G$ the incomparability graph of a natural unit interval order, this quasisymmetric function $X_G(\mathbf{x},q)$ turned out to be symmetric. Shareshian-Wachs generalized the expansion of $X_G(\mathbf{x},q)$ in terms of various symmetric function bases. For example, The coefficients $c_\lambda(q)$ of $X_G(\mathbf{x},q)$ in the expansion in terms of the $e_\lambda$ were generalized as $\sum_{\lambda\vdash |V|, l(\lambda)=j}c_\lambda(q)=\sum_{o\in O(G,j)}q^{\text{asc}(o)}$, where $O(G,j)$ is the set of acyclic orientations of $G$ with $j$ sinks and $\text{asc}(o)$ is the number of directed edges $(a,b)$ of $o$ such that $a<b$. The Schur-basis expansion also possessed its quasisymmetric version in \cite{SHARESHIAN2016497}, which can be reduced to the special case in \cite{GASHAROV1996193} with $q=1$. Furthermore, if $X_G(\mathbf{x},q)$ can be expanded into a basis $b$ such that all coefficients are in $\mathbb{N}[q]$, then we say $X_G(\mathbf{x},q)$ is $b$-positive. Shareshian-Wachs obtained the following version of the Stanley-Stembridge conjecture.

\begin{conjecture}\cite{SHARESHIAN2016497}\label{SWepositive}
Let $G$ be the incomparability graph of a natural unit interval order, then $X_G(\mathbf{x},q)$ is $e$-positive.
\end{conjecture}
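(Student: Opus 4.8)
The plan is to reduce $e$-positivity to a positivity statement about a signed sum of inverse Kostka numbers, and then to attack that signed sum combinatorially while inducting on the bounce number of the Dyck path attached to the natural unit interval order $P$. By the $q$-analogue of Gasharov's theorem due to Shareshian--Wachs, we may start from the Schur expansion $X_{\inc(P)}(\mathbf{x},q)=\sum_\lambda c_\lambda(q)\,s_\lambda$ with every $c_\lambda(q)\in\bN[q]$, where $c_\lambda(q)$ is a generating function over $P$-tableaux of shape $\lambda$ weighted by a suitable $q$-statistic. Since $\omega$ exchanges $h_\mu$ and $e_\mu$ and sends $s_\lambda$ to $s_{\lambda'}$, the inverse Kostka relation $s_\lambda=\sum_\mu K^{-1}_{\lambda'\mu}\,e_\mu$ gives
\begin{equation*}
X_{\inc(P)}(\mathbf{x},q)=\sum_\mu\Big(\sum_\lambda c_\lambda(q)\,K^{-1}_{\lambda'\mu}\Big)e_\mu ,
\end{equation*}
so the conjecture is equivalent to showing that each inner coefficient lies in $\bN[q]$. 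This is precisely the quantity the present paper controls when the bounce number is three; the goal is to control it for every bounce number.

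The second step is to make the signs explicit through the E\~{g}ecio\~{g}lu--Remmel model: $K^{-1}_{\lambda'\mu}=\sum_T \sign(T)$, where $T$ runs over special rim-hook tabloids of shape $\lambda'$ and type $\mu$, and $\sign(T)=\prod_{h}(-1)^{\heigt(h)-1}$ is the product over the rim hooks $h$ of $T$. Substituting this in, the coefficient of $e_\mu$ becomes a signed sum over pairs $(U,T)$, where $U$ is a $P$-tableau of some shape $\lambda$ and $T$ is a special rim-hook tabloid of shape $\lambda'$ and type $\mu$, each pair weighted by $\sign(T)$ times the $q$-weight of $U$. I would then organize an induction on the bounce number $b$ of the Dyck path of $P$: the cases $b=1$ (complete graphs), $b=2$, and $b=3$ are already available, and for the inductive step I would exploit the recursive structure of Dyck paths — removing the topmost bounce block produces a natural unit interval order of bounce number $b-1$ — together with the modular linear relations among chromatic symmetric functions of natural unit interval orders, so as to express $X_{\inc(P)}$ through strictly smaller cases.

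The hard part will be the cancellation of the negative terms in the signed sum over pairs $(U,T)$. For bounce number three the relevant shapes $\lambda$ are few and the rim-hook heights are bounded, so the negative contributions can be matched to positive ones by an explicit, local involution; for unbounded bounce number both the set of shapes with $c_\lambda(q)\neq 0$ and the possible rim-hook heights grow, and a purely local pairing no longer suffices. The crux is therefore to construct a single sign-reversing, $q$-weight-preserving involution on the set of \emph{bad} pairs $(U,T)$ — those contributing a $-1$ — whose fixed points are exactly the positively weighted pairs, and to show that this involution is compatible with the bounce decomposition so that it can be assembled inductively. I expect the combinatorics of this involution, rather than the bookkeeping with $\omega$ and $K^{-1}$, to be the decisive and most delicate ingredient.
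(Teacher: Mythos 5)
There is a genuine gap, and it is worth being precise about what it is. The statement you are proving is an open conjecture: the paper does not prove it either, it only cites it from Shareshian--Wachs and establishes partial cases (certain coefficients for bounce number three in Theorems \ref{e_{(n-2l,l,l)}} and \ref{e_{(n-2l-1,l+1,l)}}, and full $e$-positivity only for the special family $\mathbf{d}=(d_1,d_2,n-1,\dots,n-1,n,\dots,n)$ in Theorem \ref{epos1}). Your setup — Schur expansion via $P$-tableaux with the inversion statistic, the E\~{g}ecio\~{g}lu--Remmel expansion $K^{-1}_{\lambda'\mu}=\sum_T\sign(T)$, and sign-reversing maps on pairs — is exactly the paper's framework, so up to that point you have reproduced the reduction, not a proof. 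The proof content would have to be the involution itself, and your proposal explicitly defers it: ``I expect the combinatorics of this involution \ldots{} to be the decisive and most delicate ingredient'' is an acknowledgement that the decisive step is missing. Even in the bounce-three case, the paper's injections ($\alpha$, $f$, $g$, $\phi$, $\phi'$) required a delicate case analysis resting on Lemma \ref{lm1} (at most one admissible chain extension in a column, because chains have length at most $|\mathbf{m}|$), and the paper states in Section \ref{open1} that it could not push these insertions to the remaining coefficients even at bounce three; nothing in your sketch supplies the mechanism that would replace this for unbounded bounce number, where chains and rim-hook heights are no longer bounded by $3$.

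Two specific steps in your plan would fail as stated. First, your induction base is not available: for general $q$, bounce number three is \emph{not} settled — Cho--Hong's result covers $|\mathbf{m}|\le 3$ only at $q=1$, Cho--Huh covers bounce two and a hook-shape subcase of bounce three, and the present paper covers only the coefficients $[e_{(n-2l,l,l)}]$ and $[e_{(n-2l-1,l+1,l)}]$; so ``$b=3$ is already available'' is false in the quasisymmetric setting you are working in. Second, the inductive step has no supporting mechanism: removing the topmost bounce block of a Dyck path does give a smaller Dyck path, but there is no known identity expressing $X_{\inc(P(\mathbf{d}))}(\mathbf{x},q)$ in terms of the chromatic quasisymmetric functions of the truncated posets, and Guay-Paquet's modular relation (proved at $q=1$) relates paths of the \emph{same} size and does not monotonically decrease bounce number, so it cannot by itself drive a recursion on $b$. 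A smaller but real defect: an involution ``on the set of bad pairs whose fixed points are exactly the positively weighted pairs'' is incoherent as stated — positive pairs are not bad pairs; what you need is a sign-reversing, $q$-weight-preserving injection from negative pairs into positive pairs (equivalently an involution on all pairs with only positive fixed points), which is what the paper constructs coefficient by coefficient, and which is precisely the object your proposal does not exhibit.
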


The long-time open $(\bf{3}+\bf{1})$-free posets conjecture and its quasisymmetric refinement were widely studied and proved for some special cases, such as the {\em cycle graphs} \cite{ALEXANDERSSON20183453}, the {\em complete graphs} \cite{phdthesis}, the {\em lollipop graphs} \cite{doi:10.1137/17M1144805}, the {\em melting lollipop graphs} \cite{HUH2020111728}, and graphs generated by {\em abelian} Dyck paths \cite{Cho2019OnEA, ALCO_2019__2_6_1059_0, article} and some certain classes of {\em non-abelian} Dyck paths \cite{Cho2019OnEA, Cho2022PositivityOC}. In particular, since the unit interval orders on $[n]$ are in bijection with Dyck paths on an $n\times n $ board (Proposition \ref{UIO char}), Cho-Huh \cite{Cho2019OnEA} proved Conjecture \ref{SWepositive} for Dyck paths of bounce number two (abelian) and a "hook-shape" case with bounce number three (non-abelian). The results of Harada-Precup \cite{ALCO_2019__2_6_1059_0} on the cohomology of abelian Hessenberg varieties also imply that Conjecture \ref{SWepositive} holds in the abelian case. Cho-Hong \cite{Cho2022PositivityOC} showed Conjecture \ref{SWepositive} is true when the corresponding Dyck paths have bounce number $\leq 3$ and $q=1$. Recently, Abreu-Nigro \cite{https://doi.org/10.48550/arxiv.2212.13497} proved that for $q=1$ the coefficients of $e_\lambda$ for $\lambda$ of length two are positive. Hamaker-Sagan-Vatter \cite{brucesagantalk} showed that the coefficient of $e_n$ in $X_G(\mathbf{x},q)$ is in $\mathbb{N}[q]$ using sign-reversing involutions, which was firstly proved in \cite{SHARESHIAN2016497}.

This paper is organized as follows. In Section \ref{section2}, we start with introducing basic definitions and facts needed for chromatic symmetric functions. In Section \ref{section3}, using the {\em inverse Kostka numbers} \cite{doi:10.1080/03081089008817966} and Conjecture \ref{SWepositive}, we develop a new approach to expand $X_G(\mathbf{x},q)$ obtained from the non-abelian Dyck paths of bounce number three. From there, in the spirit of \cite{Cho2019OnEA} and \cite{Cho2022PositivityOC}, we will construct sign-reversing involutions in Theorem \ref{e_{(n-2l,l,l)}} and Theorem \ref{e_{(n-2l-1,l+1,l)}} to show that the new expansion is $e$-positive, that is, the inversions of all $P$-tableaux are preserved. In Section \ref{section4}, we will prove Conjecture \ref{SWepositive} for Dyck paths of bounce number three beyond the "hook-shape" case shown in \cite{Cho2019OnEA}. Denote the coefficient of $e_\lambda$ in the expansion of $X_G(\mathbf{x},q)$ in $e$-basis by $[e_\lambda]X_G(\mathbf{x},q)$. We state our \textbf{main results} as follows:

\begin{enumerate}[label={[\arabic*]}]
\item (Theorem \ref{e_{(n-2l,l,l)}} and Theorem \ref{e_{(n-2l-1,l+1,l)}}) For a Dyck path of bounce number three, $[e_\lambda]X_G(\mathbf{x},q)$ is in $\mathbb{N}[q]$ for $\lambda$ of length three where the last two parts are equal or differ by one. \\

\item (Theorem \ref{epos1}) For a Dyck path $\mathbf{d}=(d_1, d_2, n-1,\cdots,n-1,n,\cdots,n)$, $X_G(\textbf{x},q)$ is $e$-positive.
\end{enumerate}

\section{Background} \label{section2}
This section introduces definitions and background on the chromatic symmetric function and the inverse Kostka matrix needed for our results.

Following \cite{SHARESHIAN2016497}, for a finite graph $G=(V,E)$, the {\em chromatic quasisymmetric function} is defined as \[X_G(\mathbf{x}, q)=\sum_\kappa q^{\text{asc}(\kappa)}\mathbf{x}_\kappa,\]
where the sum ranges over all proper colorings $\kappa: V \rightarrow \mathbb{P}$ and $\text{asc}(\kappa):=|\{(v_i,v_j)\in E:i<j\,\, \text{and}\,\, \kappa(i)<\kappa(j)\}|$.

In \cite{GASHAROV1996193}, Gasharov defined the $P$-tableau to expand $X_G(\mathbf{x})$ in terms of the Schur basis $s_\lambda$. Let $a_{i,j}$ be the entry of a Young diagram at $i^{th}$ row and $j^{th}$ column. A {\em $P$-tableau of shape $\lambda$} is a filling of a Young diagram of shape $\lambda$ with elements of the poset $P=(P, \prec)$ satisfying the following.

\begin{enumerate}
    \item Each element in $P$ appears once in the diagram.
    \item $a_{i,j}\prec a_{i,j+1}$ for all $i,j$.
    \item $a_{i+1,j}\nprec a_{i,j}$ for all $i,j$.
\end{enumerate}

\begin{theorem}\cite{GASHAROV1996193}
 Let $P$ be a $(\bf{3}+\bf{1})$-free poset, and $G=\inc(P)$. If $X_G(\mathbf{x})=\sum_{\lambda}c_\lambda s_\lambda$, then $c_\lambda$ is the number of $P$-tableaux of shape $\lambda$.
\end{theorem}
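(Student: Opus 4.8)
The plan is to prove the symmetric-function identity $X_{\inc(P)}=\sum_\lambda \#\{P\text{-tableaux of shape }\lambda\}\,s_\lambda$ by converting the Schur expansion into a signed sum of products of complete homogeneous symmetric functions via the Jacobi--Trudi identity, and then cancelling the negative terms with a sign-reversing involution whose surviving objects are exactly the $P$-tableaux. First I would record the combinatorial content of the left-hand side: a proper coloring of $\inc(P)$ assigns colors so that each color class is an independent set of $\inc(P)$, i.e.\ a chain of $P$, so that $X_{\inc(P)}=\sum_\mu a_\mu m_\mu$, where $a_\mu$ is the number of partitions of the ground set of $P$ into chains whose sizes form the partition $\mu$.

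To reach the Schur basis I would invoke Jacobi--Trudi, $s_\lambda=\det(h_{\lambda_i-i+j})_{1\le i,j\le \ell}$ with $\ell=\ell(\lambda)$, so that comparing coefficients reduces the claim to the signed identity $\#\text{P-tab}(\lambda)=\sum_{\pi\in S_\ell}\sign(\pi)\,(\text{count of chain families filling rows of lengths }\lambda_i-i+\pi(i))$. I would model a single term by placing, for each $i$, a chain $C_i$ of $P$ of length $\lambda_i-i+\pi(i)$ into row $i$ of an array, left-justified, with the $C_i$ pairwise disjoint and exhausting $P$. The identity permutation together with the straight shape $\lambda$ then yields the genuine row-filled diagrams whose rows are chains, i.e.\ the row-filling part of a $P$-tableau.

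The heart of the argument is a sign-reversing involution in the spirit of Lindstr\"om--Gessel--Viennot. Scanning from the top row downward and from the left, I would locate the first cell $(i+1,j)$ at which the column condition $a_{i+1,j}\nprec a_{i,j}$ fails (equivalently, the first ``crossing'' of rows $i$ and $i+1$), exchange the tails of rows $i$ and $i+1$ from column $j$ onward, and simultaneously compose $\pi$ with the transposition $(i\ \,i+1)$; this reverses the sign, and, being keyed to the \emph{first} defect, is an involution. The fixed points are precisely the configurations with $\pi=\mathrm{id}$ and $a_{i+1,j}\nprec a_{i,j}$ throughout, that is, the $P$-tableaux of shape $\lambda$, each counted with sign $+1$, which would give $c_\lambda=\#\text{P-tab}(\lambda)$.

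The main obstacle, and the only place the hypothesis enters, is checking that the tail-swap is well defined, namely that after exchanging tails the two new rows are still chains of $P$. Propagating the comparabilities across the cut, one of the two needed relations, $a_{i+1,j-1}\prec a_{i,j}$, is free by transitivity from $a_{i+1,j-1}\prec a_{i+1,j}\prec a_{i,j}$, but the other, $a_{i,j-1}\prec a_{i+1,j}$, is not, and I expect this to be exactly where $(\mathbf{3}+\mathbf{1})$-freeness is indispensable: were it to fail, the incomparable pair together with a suitably chosen chain from the neighboring cells would form an induced $\mathbf{3}+\mathbf{1}$, contradicting the hypothesis. Making this local case analysis precise, and confirming that the map is genuinely fixed-point-free off the $P$-tableaux, is the crux. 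As an alternative route better fitting the machinery of this paper, I would instead expand $m_\mu$ in the Schur basis through the inverse Kostka numbers, writing $c_\lambda=\sum_\mu a_\mu K^{-1}_{\mu\lambda}$, and run the E\~{g}ecio\~{g}lu--Remmel special rim hook tabloid model as the signed set on which the same cancellation is performed.
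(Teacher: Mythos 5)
The paper does not prove this statement at all---it is quoted from Gasharov \cite{GASHAROV1996193}---so the relevant comparison is with the proof in that reference. Your overall architecture is in fact exactly Gasharov's: expand $X_{\inc(P)}$ in the monomial basis as counts of chain covers, use Jacobi--Trudi to write $c_\lambda=\sum_{\pi\in S_\ell}\sign(\pi)\,\#\{\text{chain covers with row lengths }\lambda_i-i+\pi(i)\}$, and cancel with a tail-swapping sign-reversing involution whose fixed points are the $P$-tableaux, with $(\mathbf{3}+\mathbf{1})$-freeness invoked to show the swapped rows are still chains. (The same scheme, run $q$-analytically, is how Shareshian--Wachs prove Theorem~\ref{quasi shur exp}.) So the route is the right one, but your central construction has a concrete flaw.

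The same-column tail swap is incompatible with composing $\pi$ by the transposition. In the signed set, row $i$ has length $\lambda_i-i+\pi(i)$ and row $i+1$ has length $\lambda_{i+1}-(i+1)+\pi(i+1)$; cutting both rows at column $j$ and exchanging tails simply exchanges these two lengths, whereas the term you must land on, for $\sigma=\pi\circ(i,i+1)$, has row lengths $\lambda_i-i+\pi(i+1)$ and $\lambda_{i+1}-(i+1)+\pi(i)$. These differ from the exchanged lengths by $\lambda_i-\lambda_{i+1}+1\geq 1$, so the image of your map is never an object of the signed set, and the ``involution'' is not defined on that set at all. The correct swap (as in the Lindstr\"om--Gessel--Viennot crossing argument, and in Gasharov's proof) cuts the two rows at columns offset by exactly $\lambda_i-\lambda_{i+1}+1$. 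This is not cosmetic: with the offset cut the two chain relations to be verified change---e.g.\ when $\lambda_i=\lambda_{i+1}$ the relation that is free by transitivity is $a_{i+1,j-2}\prec a_{i,j}$, and the one requiring $(\mathbf{3}+\mathbf{1})$-freeness plus minimality of the violation is the same-column relation $a_{i,j-1}\prec a_{i+1,j-1}$, not your $a_{i,j-1}\prec a_{i+1,j}$---so the case analysis must be redone at the correct junction. A second, related omission: when $\pi\neq\mathrm{id}$ the row lengths need not be weakly decreasing (e.g.\ $\lambda=(2,2)$, $\pi$ the transposition gives rows of lengths $3$ and $1$), and such an object can have no cell where $a_{i+1,j}\prec a_{i,j}$; unless ``crossing'' is defined to include these shape violations, these become uncancelled terms of sign $-1$ and the cancellation fails. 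Finally, your fallback via the inverse Kostka matrix states the correct identity $c_\lambda=\sum_\mu a_\mu K\inv_{\mu\lambda}$, but the required cancellation, now on pairs consisting of a chain cover and a special rim hook tabloid, is left entirely unconstructed, so it does not repair the gap.
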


This expansion has a version for $X_G(\mathbf{x},q)$. Let $\PT(G)$ be the set of all P-tableaux obtained from $G=\inc(P)$. Then for $T\in \PT(G)$, an {\em inversion} $(i,j)$ is a pair of entries $i$ and $j$ of $T$ such that $i$ and $j$ are incomparable in $P$, $i>j$ and $i$ appears above $j$ in $T$. Let $\iv(T)$ denote the number of inversions of $T$ and $\sh(T)$ be the shape of $T$. 

\begin{theorem}\cite{SHARESHIAN2016497} \label{quasi shur exp}
Let $P$ be a natural unit interval order and $G=\inc(P)$. Then \[X_G(\mathbf{x},q)=\sum_{T\in \PT(G)}q^{\iv(T)}s_{\sh(T)}.\]
\end{theorem}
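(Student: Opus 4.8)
The plan is to prove the refined statement that, for every partition $\lambda$, the coefficient of $s_\lambda$ in $X_G(\mathbf{x},q)$ equals $\sum_{T}q^{\iv(T)}$ summed over the $P$-tableaux $T$ with $\sh(T)=\lambda$. First I would use the fact that, because $P$ is a natural unit interval order, $X_G(\mathbf{x},q)$ is genuinely symmetric (this is part of the Shareshian--Wachs framework), so it admits a well-defined expansion $X_G(\mathbf{x},q)=\sum_\lambda c_\lambda(q)\,s_\lambda$ and it suffices to identify the $c_\lambda(q)$. To access these coefficients I would pass through the complete homogeneous basis by Jacobi--Trudi, writing
\[
s_\lambda=\det\!\big(h_{\lambda_i-i+j}\big)_{1\le i,j\le \ell(\lambda)}=\sum_{\sigma}\sign(\sigma)\prod_{i}h_{\lambda_i-i+\sigma(i)},
\]
which converts the desired identity into a \emph{signed} statement about fillings. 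Along the way one must reconcile the two statistics in play: $X_G(\mathbf{x},q)$ is defined through $\text{asc}(\kappa)$ on colorings, while the target is phrased through $\iv(T)$ on tableaux, so part of the bookkeeping is to track how the ascent count on colorings is carried to the inversion count on the combinatorial objects below.

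The core is a sign-reversing involution in the spirit of Gasharov's $q=1$ argument and of the involutions constructed later in this paper. I would introduce the auxiliary class of \emph{$P$-arrays}: fillings of a diagram of composition shape by the elements of $P$, each used exactly once, whose rows are strictly increasing in $\prec$ (so each row is a chain, matching the color classes of $\inc(P)$) but whose columns are unconstrained. Expanding each factor $h_{\lambda_i-i+\sigma(i)}$ as a sum over $\prec$-increasing rows and collecting the Jacobi--Trudi signs, I would express $X_G(\mathbf{x},q)$ as a signed, $q$-weighted sum over all $P$-arrays, where the monomial weight records the colors and $q$ records the induced ascent/inversion statistic. The genuine $P$-tableaux are exactly the $P$-arrays that additionally satisfy the column condition $a_{i+1,j}\nprec a_{i,j}$ and carry sign $+1$; everything else must cancel, and each surviving tableau must contribute precisely $q^{\iv(T)}s_{\sh(T)}$.

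To produce this cancellation I would locate, in a $P$-array that fails to be a $P$-tableau, the first position (in a fixed scanning order) where $a_{i+1,j}\prec a_{i,j}$ violates the column condition, and swap the tails of rows $i$ and $i+1$ from column $j$ onward, simultaneously transposing those two rows in the Jacobi--Trudi index so that $\sign(\sigma)$ flips. The routine steps to verify are that this map is an involution with no $P$-tableau as a fixed point, that the swapped rows remain $\prec$-increasing chains (this is where choosing the \emph{first} violation matters), and that the monomial weight $\mathbf{x}_\kappa$ is preserved so that paired terms genuinely cancel. The main obstacle, and the step I expect to be hard, is $q$-equivariance: in Gasharov's $q=1$ version only the weight and sign need tracking, whereas here one must show the ascent/inversion statistic is unchanged by the tail-swap, which demands a careful local accounting of the inversions created and destroyed between the four entries straddling the swap and the remaining entries of the diagram. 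This is exactly the point where the natural-unit-interval-order hypothesis---equivalently that $P$ is both $(\mathbf{3}+\mathbf{1})$-free and $(\mathbf{2}+\mathbf{2})$-free---must be invoked, since it constrains the comparability relations among $a_{i,j},a_{i,j+1},a_{i+1,j},a_{i+1,j+1}$ tightly enough to force the statistic to balance. Once these points are established, the only surviving terms are the $P$-tableaux, each contributing $q^{\iv(T)}s_{\sh(T)}$, giving the claim.
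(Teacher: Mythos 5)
You should know at the outset that the paper does not prove this statement at all: Theorem \ref{quasi shur exp} is quoted as background from Shareshian--Wachs \cite{SHARESHIAN2016497}, and the paper's own sign-reversing involutions (Theorems \ref{e_{(n-2l,l,l)}} and \ref{e_{(n-2l-1,l+1,l)}}) take it as an input. So there is no in-paper proof to compare against; the relevant comparison is with the original proof in \cite{SHARESHIAN2016497}, and your outline is essentially a reconstruction of that proof's strategy: it is the $q$-analogue of Gasharov's argument \cite{GASHAROV1996193}, passing through Jacobi--Trudi, interpreting $\langle X_G,h_\mu\rangle$ as a $q$-weighted count of $P$-arrays (row-chain fillings with no column condition), and cancelling non-tableaux by a tail-swapping, sign-reversing involution. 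You have therefore found the right road map rather than a different route.

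As a proof, however, the proposal leaves unproven exactly the steps where all of the substance lies, and two of the steps you label as routine are not. First, chain-preservation under the swap already fails for the naive rule ``leftmost violation, swap tails'': if $a_{i+1,j}\prec a_{i,j}$ is the leftmost violation, transitivity makes one new row a chain (since $a_{i+1,j-1}\prec a_{i+1,j}\prec a_{i,j}$), but the other new row needs $a_{i,j-1}\prec a_{i+1,j}$, and $(\mathbf{3}+\mathbf{1})$-freeness does not force this: the only available $3$-chain $a_{i+1,j-1}\prec a_{i+1,j}\prec a_{i,j}$ has its top element comparable to $a_{i,j-1}$, so no forbidden configuration arises, and in fact a four-element natural unit interval order (with $x\prec y\prec z$, $w\prec z$, $w$ incomparable to $x,y$, and rows $(w,z)$, $(x,y)$) gives a concrete failure. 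Gasharov's involution uses a more careful choice of violation and swap, and this is a lemma, not bookkeeping. Second, the $q$-equivariance you defer as ``the step I expect to be hard'' is precisely the new content of the Shareshian--Wachs upgrade; without it the argument yields nothing beyond $q=1$. Third, a concrete pitfall in reconciling the statistics: with the paper's conventions, the coefficient of $m_\mu$ in $X_G(\mathbf{x},q)$ is the generating function of $P$-arrays by the statistic counting incomparable pairs $u<v$ with $u$ in a higher row, and since every incomparable pair lies in distinct rows, this statistic and $\iv$ sum to $|E(G)|$ on every array. A naive setup therefore produces fixed points weighted by $q^{|E(G)|-\iv(T)}$ rather than $q^{\iv(T)}$; one must either orient the array rows oppositely (color class $1$ at the bottom) or invoke the nontrivial palindromicity of $X_G(\mathbf{x},q)$. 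All of these issues are resolved in \cite{SHARESHIAN2016497}, but they are the theorem, so the proposal as written is an outline with genuine gaps rather than a proof.
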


Note that this expansion is equivalent to $X_G(\textbf{x},q)=\sum_{\lambda}B_{\lambda}(q)s_{\lambda}$, where $B_{\lambda}=\displaystyle\sum_{\substack{T\in \PT(G)\\ \sh(T)=\lambda}}q^{\iv(T)}$.

Recall that we introduced the natural unit interval orders as posets both $(\bf{3}+\bf{1})$-free and $(\bf{2}+\bf{2})$-free. Shareshian-Wachs \cite{SHARESHIAN2016497} provided a helpful characterization of this family of posets.

Let $\mathbf{d}=(d_1,d_2,\cdots,d_{n-1})$ be a sequence of positive integers satisfying $d_1\leq d_2\leq\cdots\leq d_{n-1}\leq n$ and $d_i\geq i$ for all $i$. $\mathbf{d}$ is also called a {\em Dyck path} on an $n\times n$ board, taking a staircase walk from the bottom left corner $(0,0)$ to the top right corner $(n,n)$ that lies above the diagonal. It is connecting $(0,0),(0,d_1),(1,d_1),(1,d_2),(2,d_2),(2,d_3),(3,d_3),\cdots,(n-2,d_{n-1}),(n-1,d_{n-1}),(n-1,n),(n,n)$ with either horizontal or vertical steps.

For any Dyck path $\mathbf{d}$, we can obtain a poset $P(\mathbf{d})$ associated with $\mathbf{d}$. The poset relations in $P(\mathbf{d})$ are given by $i\prec j$ if $i<n$ and $j\in \{d_i+1,d_i+2,\cdots,n\}$. Equivalently, the poset relations $i\prec j$ in $P(\mathbf{d})$ are given by all cells with coordinates $(i,j)$ that lie above $\mathbf{d}$. Denote the Young diagram of all cells above $\mathbf{d}$ by $\tau$. 

A bijection exists between the natural unit interval orders and the Dyck paths.

\begin{proposition}\cite{SHARESHIAN2016497}\label{UIO char}
Let $P$ be a poset on $[n]$, then $P$ is a natural unit interval order iff $P\simeq P(\mathbf{d})$ for some $\mathbf{d}=(d_1,d_2,\cdots,d_{n-1})$.
\end{proposition}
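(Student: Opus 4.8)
The plan is to prove both implications, handling the backward direction ($\Leftarrow$) by a direct inequality argument and the forward direction ($\Rightarrow$) by passing to a unit-interval representation and relabeling. Since being $(\mathbf{3}+\mathbf{1})$- and $(\mathbf{2}+\mathbf{2})$-free is preserved under isomorphism, for $\Leftarrow$ it suffices to check $P(\mathbf{d})$ itself.

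For the backward direction I would fix a Dyck path $\mathbf{d}$ and verify that $P(\mathbf{d})$, whose relation is $i \prec j \iff i<n \text{ and } j > d_i$, is a natural unit interval order. Naturalness is immediate: $i \prec j$ forces $j > d_i \ge i$, so the identity is a linear extension of $P(\mathbf{d})$. The two forbidden patterns then reduce to the semiorder axioms, each a one-line consequence of $d$ being weakly increasing. For $(\mathbf{2}+\mathbf{2})$-freeness I would argue: if $a \prec b$ and $c \prec d$ with $a \not\prec d$, then $d \le d_a$ while $d > d_c$, forcing $d_c < d_a$ and hence $c <_{\mathbb{Z}} a$; monotonicity then gives $d_c \le d_a < b$, so $c \prec b$, which contradicts the induced $\mathbf{2}+\mathbf{2}$ pattern (in which $b$ and $c$ must be incomparable). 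For $(\mathbf{3}+\mathbf{1})$-freeness I would verify the axiom that $a \prec b \prec c$ makes every $w$ comparable to $a$ or to $c$: if $a \not\prec w$ then $w \le d_a < b$, so $d_w \le d_b < c$ and thus $w \prec c$; an element incomparable to all of $a,b,c$ would violate this, so no induced $\mathbf{3}+\mathbf{1}$ exists.

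For the forward direction I would start from a natural unit interval order $P$. Being $(\mathbf{3}+\mathbf{1})$- and $(\mathbf{2}+\mathbf{2})$-free, $P$ is a semiorder, so by the Scott--Suppes representation there is a map $a\colon P \to \bR$ with $x \prec y \iff a(y) > a(x)+1$. Relabeling the elements as $1,\dots,n$ so that $a(1) \le a(2) \le \cdots \le a(n)$ (breaking ties arbitrarily) yields an isomorphic poset on $[n]$ for which the identity is a linear extension; this sorted labeling is exactly what ``natural'' encodes. I would then set
\[
 d_i := \max\{\, j : a(j) \le a(i)+1 \,\}
\]
and check the defining conditions of a Dyck path together with the relation. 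Sortedness gives $a(j) > a(i)+1 \iff j > d_i$, hence $i \prec j \iff j > d_i$; the inclusion $\{1,\dots,i\} \subseteq \{\, j : a(j) \le a(i)+1 \,\}$ gives $d_i \ge i$; and $a(i) \le a(i')$ for $i \le i'$ gives $d_i \le d_{i'}$, so $\mathbf{d} = (d_1,\dots,d_{n-1})$ is weakly increasing with $d_i \le n$. Therefore $P \simeq P(\mathbf{d})$.

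The main obstacle is the forward construction: one must produce the labeling under which the relation takes the staircase form $i \prec j \iff j > d_i$, and this is precisely where the hypotheses on $P$ are used. I expect the delicate points to be (i) justifying that sorting by the representing values $a(\cdot)$ is compatible with the order, so that the relabeled poset is genuinely isomorphic to $P$, and (ii) confirming that this same sorting forces $\mathbf{d}$ to be weakly increasing with $d_i \ge i$ — the two conditions that make $\mathbf{d}$ a bona fide Dyck path. Everything else is bookkeeping; the backward direction in particular rests entirely on the monotonicity inequalities sketched above.
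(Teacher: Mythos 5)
Your proposal is correct, but there is nothing in this paper to compare it against: Proposition~\ref{UIO char} is stated as a cited background result from Shareshian--Wachs \cite{SHARESHIAN2016497}, and the paper itself gives no proof of it. Your argument is essentially the standard one (and, in substance, the one underlying the cited reference): for the backward direction, a direct verification from the weak monotonicity of $\mathbf{d}$ and $d_i\ge i$ that $P(\mathbf{d})$ admits no induced $\mathbf{2}+\mathbf{2}$ or $\mathbf{3}+\mathbf{1}$; for the forward direction, the Scott--Suppes representation of a semiorder by unit intervals, followed by sorting the representing values and reading off $d_i=\max\{j : a(j)\le a(i)+1\}$. Two remarks. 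First, the real weight of the forward direction is carried by the Scott--Suppes theorem, which you invoke as a black box; this is legitimate (it is classical), but it should be cited explicitly, since under this paper's working definition of natural unit interval order (a poset that is $(\mathbf{3}+\mathbf{1})$-free and $(\mathbf{2}+\mathbf{2})$-free) that theorem is precisely what converts the forbidden-pattern hypothesis into a numerical representation, and it is the only non-bookkeeping step. Second, a couple of boundary cases deserve a line: the element $n$ has no associated $d_n$, so in your $(\mathbf{3}+\mathbf{1})$ argument the step $d_w\le d_b$ requires $w<n$ (which does follow from $w\le d_a<b\le n$), and in the sorting step ties $a(i)=a(j)$ may be broken arbitrarily only because tied elements are necessarily incomparable under the strict unit-gap relation; both points are easily patched and neither is a genuine gap.
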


Thus, given a natural unit interval order $P$ on $[n]$, we can obtain its corresponding Dyck path $\mathbf{d}=(d_1,d_2,\cdots,d_{n-1})$ on an $n\times n$ board, the partition $\tau$ determined by $\mathbf{d}$, and $G=\inc(P)$. Let $\mathbf{m}:=(m_0,m_1,\cdots,m_l)$ be a sequence of positive integers satisfying $d_1=m_0<m_1<\cdots<m_{l-1}<m_l=n$, and for all $i\in [1,l-1]$, $m_i$ is determined recursively by $m_{i-1}$ such that $m_i=d_{m_{i-1}+1}$. Let $b(\mathbf{d})$ be the {\em bounce path} connecting $(0,0),(0,m_0),(m_0,m_0),(m_0,m_1),(m_1,m_1),(m_1,m_2),(m_2,m_2),\cdots,(n,n)$. Denote the {\em bounce number}, that is, the number of times the bounce path hitting the diagonal including the endpoint $(n,n)$ by $|\mathbf{m}|$.

Figure \ref{b(d)=3} illustrates an example of a natural unit interval order $P=P(\mathbf{d})$ for Dyck path $\mathbf{d}=(4,6,6,6,6,7,8,8)$. Let $\{1,2,\cdots,m_0\}$ be the set $S_1$, $\{m_0+1,m_0+2,\cdots,m_1\}$ be the set $S_2$, and $\{m_1+1,m_1+2,\cdots,m_2\}$ be the set $S_3$. Denote the sizes of $S_1,S_2,S_3$ by $c,b,a$ respectively.

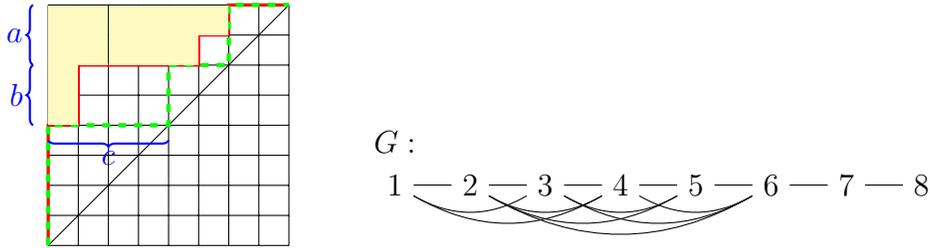
\begin{figure}[h]
    \centering
    \vspace{3mm}
\begin{tikzpicture}[every node/.style={minimum size=.4cm-\pgflinewidth, outer sep=0pt}]
\draw[step=0.4cm] (-1.6,-1.6) grid (1.6,1.6);
\draw (-1.6,-1.6) -- (1.6,1.6);
\draw [red,very thick] (-1.6,-1.6)--(-1.6,0)--(-1.2,0)--(-1.2,+.8)--(+0.4,+.8)--(0.4,1.2)--(.8,1.2)--(.8,1.6)--(1.6,1.6);
\draw [green, ultra thick, dashed] (-1.6,-1.6)--(-1.6,0)--(0,0)--(0,.8)--(.8,.8)--(.8,1.6)--(1.6,1.6);
    \node[fill=yellow!30] at (-1.4,1.4) {};
    \node[fill=yellow!30] at (-1,1.4) {};
    \node[fill=yellow!30] at (-.6,1.4) {};
    \node[fill=yellow!30] at (-.2,1.4) {};
    \node[fill=yellow!30] at (.2,1.4) {};
    \node[fill=yellow!30] at (.6,1.4) {};
    \node[fill=yellow!30] at (-1.4,1) {};
    \node[fill=yellow!30] at (-1,1) {};
    \node[fill=yellow!30] at (-.6,1) {};
    \node[fill=yellow!30] at (-.2,1) {};
    \node[fill=yellow!30] at (.2,1) {};
    \node[fill=yellow!30] at (-1.4,.6) {};
    \node[fill=yellow!30] at (-1.4,.2) {};
    \draw[decorate,decoration={brace,mirror},thick,blue] 
 (-1.8,1.6)--(-1.8,.8) node[midway,left]{$a$};
    \draw[decorate,decoration={brace,mirror},thick,blue]
    (-1.8,.8)--(-1.8,0) node[midway,left]{$b$};
    \draw[decorate,decoration={brace,mirror},thick,blue]
    (-1.6,-0.2)--(0,-0.2) node[midway,below]{$c$};
\end{tikzpicture}
\qquad
\begin{tikzpicture}
    \node[label=above:$G:$] (1) at (0,0) {1};
    \node (2) at (1,0) {2};
    \node (3) at (2,0) {3};
    \node (4) at (3,0) {4};
    \node (5) at (4,0) {5};
    \node (6) at (5,0) {6};
    \node (7) at (6,0) {7};
    \node (8) at (7,0) {8};
    \path (1) edge node {} (2);
    \path (1) edge[bend right] node {} (3);
    \path (1) edge[bend right] node {} (4);
    \path (2) edge node {} (3);
    \path (2) edge[bend right] node {} (4);
    \path (2) edge[bend right] node {} (5);
    \path (2) edge[bend right] node {} (6);
    \path (3) edge node {} (4); 
    \path (3) edge[bend right] node {} (5);
    \path (3) edge[bend right] node {} (6);
    \path (4) edge node {} (5); 
    \path (4) edge[bend right] node {} (6);
    \path (5) edge node {} (6); 
    \path (6) edge node {} (7); 
    \path (7) edge node {} (8); 
\end{tikzpicture}
\qquad
\caption{$P\simeq P(\mathbf{d})$ for $\mathbf{d}=(4,6,6,6,6,7,8,8)$ with the corresponding partition $\tau=(6511)$. $b(\mathbf{d})=(0,0),(0,4),(4,4),(4,6),(6,6),(6,8),(8,8)$. $\mathbf{m}=(4,6,8)$. $|\mathbf{m}|=3$. $S_1=\{1,2,3,4\}$; $S_2=\{5,6\}$; $S_3=\{7,8\}$. $G=\inc(P)$.}
    \label{b(d)=3}
\end{figure}

Following \cite{doi:10.1080/03081089008817966}, A {\em rim hook} with length $l$ of a partition $\lambda\vdash n$ is a sequence of $l$ connected cells in the Young diagram satisfying the following: 

\begin{enumerate}
    \item Any two adjacent cells have a common edge.
    \item The sequence starts from a cell in the diagram's southwest boundary and travels along its northeast rim.
    \item We can obtain a valid Young diagram with size $n-l$ by removing the rim hook.
\end{enumerate}

A {\em special rim hook} $H$ is a rim hook with at least one cell in the first column. Define the sign of $H$ as $\sign(H)=(-1)^{\heigt(H)-1}$, where $\heigt(H)$ is the height of the rim hook. 

A {\em special rim hook tabloid} $T$ of shape $\mu$ and type $\lambda=(\lambda_1,\cdots,\lambda_l)$ is a tiling of the Young diagram of shape $\mu$  with special rim hooks of sizes $\lambda_1,\cdots,\lambda_l$. Define the sign of $T$ as $\sign(T)=\prod_{H}\sign(H)$.

\begin{example} \label{special rim hook}
Let $T$ be a special rim hook tabloid of shape $\mu=(4333)$ and type $\lambda=(5431)$. Then we can obtain two such possible $T$ as in Figure \ref{SRHT}.
\end{example}

\begin{figure}[h]
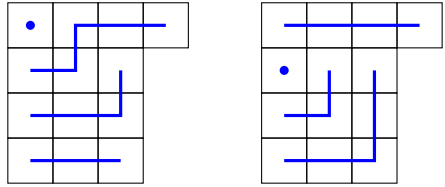

    \centering
    \vspace{5mm}
  \begin{Tableau} 
  {{ ,,,},{,,},{,,},{,,}}
      \draw[very thick,blue] (0.5,-3.5)--(2.5,-3.5);
      \draw[very thick,blue]
      (0.5,-2.5)--(2.5,-2.5)--(2.5,-1.5);
      \draw[very thick,blue]
      (0.5,-1.5)--(1.5,-1.5)--(1.5,-0.5)--(3.5,-0.5);
      \fill[blue] (0.5,-0.5) circle(.1);
  \end{Tableau}
\qquad
  \begin{Tableau}
   {{ ,,,},{,,},{,,},{,,}}
      \draw[very thick,blue] (0.5,-3.5)--(2.5,-3.5)--(2.5,-1.5);
      \draw[very thick,blue]
      (0.5,-2.5)--(1.5,-2.5)--(1.5,-1.5);
      \fill[blue] (0.5,-1.5) circle(.1);
      \draw[very thick,blue]
      (0.5,-0.5)--(3.5,-0.5);
  \end{Tableau}
  \caption{Special rim hook tabloids $T_1$ (left) and $T_2$ (right).}
    \label{SRHT}
\end{figure}

Hence $\sign(T_1)=1\cdot(-1)\cdot(-1)\cdot1=1$, and $\sign(T_2)=1\cdot1\cdot(-1)\cdot1=-1$.

Let $K_{\lambda,\mu}$ be the Kostka number enumerating {\em semistandard Young Tableau} (SSYT) of shape $\lambda$ and type $\mu$. Also let $m_\lambda$ and $h_\lambda$ denote the monomial symmetric functions and the complete homogeneous symmetric functions respectively. We have $s_{\lambda}=\sum_{\nu}K_{\lambda,\nu}m_{\nu}$. Since $\langle h_{\mu}, m_{\nu}\rangle =\delta_{\mu\nu}$ for the symmetric functions scalar product $\langle \, , \,\rangle$, we can obtain that $h_{\mu}=\sum_{\lambda}K_{\lambda,\mu}s_{\lambda}$. E\~{g}ecio\~{g}lu-Remmel \cite{doi:10.1080/03081089008817966} introduced a combinatorial interpretation of the inverse of the Kostka matrix. Since both $s_\lambda$ and $h_\lambda$ form bases for the symmetric functions, we can invert the Kostka matrix as $s_{\mu}=\sum_{\lambda}K\inv_{\lambda,\mu}h_{\lambda}$, where $K\inv$ are the inverse Kostka number. Recall that in symmetric functions, there is an involution $\omega: \Lambda \rightarrow \Lambda$ such that $\omega(e_\lambda) = h_{\lambda}$. Applying $\omega$ to both sides we obtain 

\begin{equation} \label{inv K exp}
s_{\mu}=\sum_{\lambda}K\inv_{\lambda,\mu'}e_{\lambda},
\end{equation}
where $\mu'$ denotes the conjugate partition.

The inverse Kostka number can be computed as the following.

\begin{theorem}\cite{doi:10.1080/03081089008817966} \label{inv Kostka}
\[K\inv_{\lambda, \mu}=\sum_{T}\sign(T),\]
\end{theorem}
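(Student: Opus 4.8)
The plan is to prove the identity by showing that the matrix $A=(A_{\lambda,\mu})$ with entries $A_{\lambda,\mu}=\sum_{T}\sign(T)$, the sum being over special rim hook tabloids of shape $\mu$ and type $\lambda$, is the inverse of the Kostka matrix $K$. Since $\{s_\mu\}$ and $\{h_\lambda\}$ are related by $h_\mu=\sum_\lambda K_{\lambda,\mu}s_\lambda$ and $K$ is unitriangular with respect to dominance order (hence invertible), it suffices to verify $\sum_\lambda K_{\nu,\lambda}A_{\lambda,\mu}=\delta_{\nu,\mu}$. This forces $A=K^{-1}$, and combined with the expansion $s_\mu=\sum_\lambda K^{-1}_{\lambda,\mu}h_\lambda$ it yields the claimed formula.

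First I would obtain an algebraic handle on $K^{-1}_{\lambda,\mu}$ via the Jacobi--Trudi identity $s_\mu=\det(h_{\mu_i-i+j})_{1\le i,j\le \ell}$ with $\ell=\ell(\mu)$. Expanding the determinant over the symmetric group gives $s_\mu=\sum_{\sigma\in S_\ell}\sign(\sigma)\prod_{i=1}^{\ell}h_{\mu_i-i+\sigma(i)}$, where $h_0=1$ and $h_k=0$ for $k<0$. Reading off the coefficient of $h_\lambda$ shows that $K^{-1}_{\lambda,\mu}$ equals the signed count of permutations $\sigma$ for which the multiset $\{\mu_i-i+\sigma(i):1\le i\le\ell\}$, after discarding zeros, is the multiset of parts of $\lambda$. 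A total-size check, $\sum_i(\mu_i-i+\sigma(i))=|\mu|$, confirms that each surviving $\sigma$ produces exactly $\ell(\lambda)$ positive parts summing to $|\mu|$. This recasts the theorem as the purely combinatorial assertion that this signed permutation sum equals $\sum_T\sign(T)$.

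The core of the proof is then a sign-preserving bijection between the permutations $\sigma$ contributing to the Jacobi--Trudi expansion and the special rim hook tabloids of shape $\mu$ and type $\lambda$. I would build this by interpreting each nonzero term as a tiling of the Young diagram of $\mu$ by $\ell(\lambda)$ rim hooks: the row index $i$ together with the value $\mu_i-i+\sigma(i)$ determines a rim hook, anchored in the first column, whose length is the corresponding part of $\lambda$, and one checks that $\sign(\sigma)$ matches $\prod_H\sign(H)=\prod_H(-1)^{\heigt(H)-1}$. The strictly decreasing data $\mu_i+\ell-i$ encodes precisely the ``special'' (first-column) condition, which is why each rim hook must touch the first column. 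Degenerate configurations in which two prospective hooks collide correspond to terms that either vanish (through $h_{<0}=0$) or cancel in pairs under the transposition swapping the offending rows, which flips $\sign(\sigma)$.

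The main obstacle will be making this rim hook correspondence precise and verifying that the sign bookkeeping is exact: one must show that the permutation sign decomposes as the product of rim hook heights, that every special rim hook tabloid of type $\lambda$ arises from exactly one admissible $\sigma$, and that overlapping or out-of-order placements are exactly the terms killed by $h_{<0}=0$ or cancelled by the sign-reversing swap. As an alternative to the determinant route, one could verify $\sum_\lambda K_{\nu,\lambda}A_{\lambda,\mu}=\delta_{\nu,\mu}$ directly by a sign-reversing involution on pairs $(P,T)$, where $P$ is an SSYT of shape $\nu$ and content $\lambda$ and $T$ is a special rim hook tabloid of shape $\mu$ and type $\lambda$; the involution peels rim hooks against the reading word of $P$, with the unique surviving fixed point (the all-rows tabloid of $\mu$, of sign $+1$, paired with the unique content-$\mu$ tableau, occurring only when $\nu=\mu$) producing the Kronecker delta. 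Either way, the delicate point is the exact matching of signs together with the special first-column condition.
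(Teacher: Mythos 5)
The paper does not prove this theorem; it is quoted directly from E\~{g}ecio\~{g}lu--Remmel \cite{doi:10.1080/03081089008817966} and used as a black box, so there is no internal proof to compare yours against. Judged on its own, your outline is sound and follows the two standard routes to this result. Your reduction is correct: since $K$ is unitriangular and $h_\mu=\sum_\lambda K_{\lambda,\mu}s_\lambda$, reading off the coefficient of $h_\lambda$ in the Jacobi--Trudi expansion $s_\mu=\sum_{\sigma}\sign(\sigma)\prod_i h_{\mu_i-i+\sigma(i)}$ identifies $K^{-1}_{\lambda,\mu}$ with a signed count of permutations, and the theorem becomes the claim that this count equals $\sum_T\sign(T)$. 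Two details of your sketch would need repair when executed. First, the hook assigned to row $i$ is not simply ``anchored at row $i$'' in the first column: in general it neither starts nor ends in row $i$ (in the paper's own Example \ref{special rim hook}, with $\mu=(4,3,3,3)$ and $\sigma=2314$, the single-cell hook occupying $(1,1)$ in $T_1$ is the hook belonging to row $3$). The correct dictionary peels hooks from the bottom: at each step the hook containing the lowest cell of column $1$ is the unique removable special rim hook of the current shape of its size, that size is a first-column hook length $\mu_i+\ell-i$ diminished by the number of hooks already removed, and assigning $\sigma(i)=\ell-t+1$ to the row $i$ peeled at step $t$ gives exactly $|H|=\mu_i-i+\sigma(i)$ and matches $\sign(\sigma)$ with $\prod_H(-1)^{\heigt(H)-1}$. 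Second, no pairwise cancellation is ever needed: once all $\mu_i-i+\sigma(i)\ge 0$, the correspondence is an exact bijection, and the only discarded terms are those annihilated by $h_{<0}=0$, so the transposition-cancellation fallback you reserve never occurs. Your alternative route---verifying $\sum_\lambda K_{\nu,\lambda}A_{\lambda,\mu}=\delta_{\nu,\mu}$ by a sign-reversing involution on pairs of tableaux and tabloids---is essentially the argument of the cited reference itself. Either way the plan is viable, and the real work is exactly where you located it: making the hook-to-row correspondence and the sign bookkeeping precise.
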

\noindent where the summation is over all special rim hook tabloids of type $\lambda$ and shape $\mu$.

\begin{example}
In Example \ref{special rim hook}, $K\inv_{(5431), (4333)}=\sign(T_1)+\sign(T_2)=0$.
\end{example}

\section{The chromatic symmetric functions for Dyck paths of bounce number three}\label{section3}
\subsection{The expansion of the chromatic symmetric functions in terms of the inverse Kostka number}

 In an $n\times n$ board, let $P$ be an unit interval order and $G=\inc(P)$. Then $P\simeq P(\mathbf{d})$ for $\mathbf{d}$ a Dyck path of bounce number $|\mathbf{m}|$. By Theorem \ref{quasi shur exp} and Equation \eqref{inv K exp}, we can write the associated chromatic symmetric function as

\begin{equation} \label{inv kostka expand}
X_G(\textbf{x},q)=\sum_{\mu}\sum_{\lambda}B_{\mu}(q)K\inv_{\lambda,\mu'}e_{\lambda}.
\end{equation}

\begin{remark} \label{remark 1}
In Equation (\ref{inv kostka expand}), the coefficient $B_\mu(q)$ is nonzero only if $\mu_1\leq|\mathbf{m}|$. Since for $B_{\mu}(q)=\sum_{T\in \PT(G),\sh(T)=\mu}q^{\iv(T)}$, a $P$-tableau $T$ of shape $\mu$ exists only if $\mu_1 \leq$ the longest chain of relations in $P$. In a longest chain of $P$, we can have at most one entry chosen from $\{1,\cdots,m_0\}$, and at most one entry chosen from $\{m_0+1,\cdots,m_1\}$ and so forth. Thus the length of the chain can be at most $|\mathbf{m}|$.
\end{remark}

From now and throughout the paper, we consider $X_G(\textbf{x},q)$ when $|\mathbf{m}|=3$. Following the notations in Figure \ref{b(d)=3}, we let $|S_1|=c, |S_2|=b, |S_3|=a$ and let $k=a+b$.

\begin{lemma}\label{P exist}
Let $P$ be an unit interval order on $[n]$ such that $P\simeq P(\mathbf{d})$ for Dyck path $\mathbf{d}$ and $|\mathbf{m}|=3$. For any $l$ and $j$, if a $P$-tableau $T$ with $\sh(T)=3^l2^j1^{n-3l-2j}$ exists, i.e., $B_{3^l2^j1^{n-3l-2j}}\neq 0$, then $2l+j\leq k$.
\end{lemma}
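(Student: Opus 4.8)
The plan is to read off the inequality $2l+j\le k$ directly from the blocks $S_1,S_2,S_3$ in which the entries of $T$ are forced to live. First I would record the order-theoretic facts underlying Remark~\ref{remark 1}. Because $\mathbf{d}$ is nondecreasing with $d_1=m_0$ and $m_1=d_{m_0+1}$, we have $d_x\ge m_0$ for every $x\in S_1$, $d_x\ge m_1$ for every $x\in S_2$, and $d_x=n$ for every $x\in S_3$. Translating through the defining relation $x\prec y \iff y>d_x$, this says that whenever $x\prec y$ we get $y\in S_2\cup S_3$ if $x\in S_1$, we get $y\in S_3$ if $x\in S_2$, and no element at all lies above an element of $S_3$. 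In particular each $S_i$ is an antichain, which is exactly why the longest chain of $P$ meets each block at most once and so has length at most $|\mathbf{m}|=3$.

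Next I would use that each row of a $P$-tableau is a chain in $P$, by condition (2) in the definition of a $P$-tableau, together with the fact that a shape $3^l2^j1^{n-3l-2j}$ has all rows of length at most three. In any row of length $\ge 2$ the entry $a_{i,2}$ in the second column is the successor of $a_{i,1}$ in the chain $a_{i,1}\prec a_{i,2}$, so by the previous paragraph $a_{i,2}\in S_2\cup S_3$, regardless of which block $a_{i,1}$ comes from. Likewise, in a row of length three the entry $a_{i,3}$ in the third column is the top of a chain $a_{i,1}\prec a_{i,2}\prec a_{i,3}$, so $a_{i,3}\in S_3\subseteq S_2\cup S_3$. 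Hence every entry of $T$ lying in column two or column three belongs to $S_2\cup S_3$.

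Finally I would count. For $\sh(T)=3^l2^j1^{n-3l-2j}$ there are $l$ rows of length three and $j$ rows of length two, so column three contains $l$ entries and column two contains $l+j$ entries, giving $2l+j$ entries in columns two and three combined. By condition (1) each element of $P$ occurs exactly once in $T$, so these are $2l+j$ \emph{distinct} elements, all of which lie in $S_2\cup S_3$ by the previous step. Since $S_2$ and $S_3$ are disjoint, it follows that $2l+j\le |S_2\cup S_3|=b+a=k$, which is the claim.

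The computation itself is immediate; the only real content is the first step, the description of where chains are allowed to travel through the blocks, and that is already essentially Remark~\ref{remark 1}. So I do not expect a genuine obstacle here. The one point requiring a little care is the assertion that every second-column entry lands in $S_2\cup S_3$ irrespective of where its predecessor sits, but this holds because a second entry forces its predecessor into $S_1\cup S_2$ and, in either case, the successor is pushed past $m_0$ into $S_2\cup S_3$.
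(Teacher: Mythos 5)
Your proof is correct and follows essentially the same route as the paper's: the paper's one-line argument also counts the $2l+j$ entries in columns two and three and observes (via Remark~\ref{remark 1}) that they can at most exhaust $S_2\cup S_3$, so that $2l+j\le b+a=k$. Your write-up simply makes explicit the block analysis ($d_x\ge m_0$ on $S_1$, $d_x\ge m_1$ on $S_2$, $d_x=n$ on $S_3$) that the paper leaves implicit in Remark~\ref{remark 1}.
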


\begin{proof}
This is immediate from Remark \ref{remark 1}. There are in total $2l+j$ entries in the second and third column of $T$, which can at most exhaust all elements of $P$ in $S_2$ and $S_3$.
\end{proof}

\begin{remark}\label{rmk2}
In the context of Lemma \ref{P exist}, using Remark \ref{remark 1}, we can reduce Equation \eqref{inv kostka expand} by finding all possible nonzero terms, that is, when $\mu=3^l2^j1^{n-3l-2j}$ for $l\in [0,\min\{a,b,c\}]$ and $j\in [0, k-2l]$.
\end{remark}

The six tabloids in Figure \ref{6tabloids} include all possible special rim hook tabloids when $\mu$ has at most three columns.

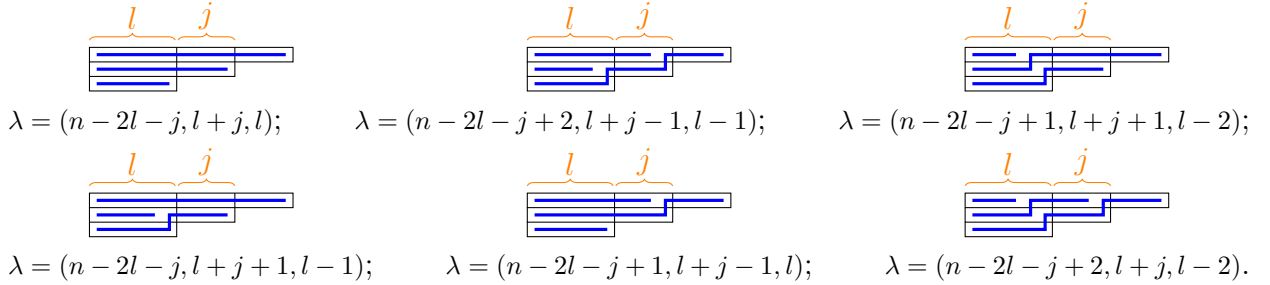
\begin{figure}[h]
 \centering
\begin{tikzpicture}[x=0.55pt,y=0.55pt,yscale=-1,xscale=1]


\draw   (100,100) -- (240,100) -- (240,110) -- (100,110) -- cycle ;

\draw   (200,100) -- (200,120) -- (100,120) -- (100,110);

\draw   (160,100) -- (160,130) -- (100,130) -- (100,120);
\draw[blue, very thick] (105,125) -- (155,125);
\draw[blue, very thick] (105,105) -- (235,105);
\draw[blue, very thick] (105,115) -- (195,115);
\draw[decorate,decoration={brace},orange]
    (100,95)--(159,95) node[midway,above]{$l$};
\draw[decorate,decoration={brace},orange]
    (161,95)--(200,95) node[midway,above]{$j$};
\end{tikzpicture}
\qquad
\hspace{2cm}
\begin{tikzpicture}[x=0.55pt,y=0.55pt,yscale=-1,xscale=1]


\draw   (100,100) -- (240,100) -- (240,110) -- (100,110) -- cycle ;

\draw   (200,100) -- (200,120) -- (100,120) -- (100,110);

\draw   (160,100) -- (160,130) -- (100,130) -- (100,120);
\draw[blue, very thick] (105,125) -- (155,125)-- (155,115)--(195,115)--(195,105)--(235, 105);
\draw[blue, very thick] (105,105) -- (185,105);
\draw[blue, very thick] (105,115) -- (145,115);
\draw[decorate,decoration={brace},orange]
    (100,95)--(159,95) node[midway,above]{$l$};
\draw[decorate,decoration={brace},orange]
    (161,95)--(200,95) node[midway,above]{$j$};
\end{tikzpicture} 
\qquad
\hspace{2cm}
\begin{tikzpicture}[x=0.55pt,y=0.55pt,yscale=-1,xscale=1]


\draw   (100,100) -- (240,100) -- (240,110) -- (100,110) -- cycle ;

\draw   (200,100) -- (200,120) -- (100,120) -- (100,110);

\draw   (160,100) -- (160,130) -- (100,130) -- (100,120);
\draw[blue, very thick] (105,125) -- (155,125)-- (155,115)--(195,115);
\draw[blue, very thick] (105,105) -- (135,105);
\draw[blue, very thick] (105,115) -- (145,115)--(145,105)--(235,105);
\draw[decorate,decoration={brace},orange]
    (100,95)--(159,95) node[midway,above]{$l$};
\draw[decorate,decoration={brace},orange]
    (161,95)--(200,95) node[midway,above]{$j$};
\end{tikzpicture} 

{\footnotesize $\lambda= (n-2l-j,l+j,l)$};
\qquad
{\footnotesize $\lambda= (n-2l-j+2,l+j-1,l-1)$};
\qquad
{\footnotesize $\lambda= (n-2l-j+1,l+j+1,l-2)$};

\begin{tikzpicture}[x=0.55pt,y=0.55pt,yscale=-1,xscale=1]


\draw   (100,100) -- (240,100) -- (240,110) -- (100,110) -- cycle ;

\draw   (200,100) -- (200,120) -- (100,120) -- (100,110);

\draw   (160,100) -- (160,130) -- (100,130) -- (100,120);
\draw[blue, very thick] (105,125) -- (155,125)-- (155,115)--(195,115);
\draw[blue, very thick] (105,105) -- (235,105);
\draw[blue, very thick] (105,115) -- (145,115);
\draw[decorate,decoration={brace},orange]
    (100,95)--(159,95) node[midway,above]{$l$};
\draw[decorate,decoration={brace},orange]
    (161,95)--(200,95) node[midway,above]{$j$};
\end{tikzpicture} 
\qquad
\hspace{2cm}
\begin{tikzpicture}[x=0.55pt,y=0.55pt,yscale=-1,xscale=1]


\draw   (100,100) -- (240,100) -- (240,110) -- (100,110) -- cycle ;

\draw   (200,100) -- (200,120) -- (100,120) -- (100,110);

\draw   (160,100) -- (160,130) -- (100,130) -- (100,120);
\draw[blue, very thick] (105,125) -- (155,125);
\draw[blue, very thick] (105,105) -- (185,105);
\draw[blue, very thick] (105,115) -- (195,115)--(195,105)--(235,105);
\draw[decorate,decoration={brace},orange]
    (100,95)--(159,95) node[midway,above]{$l$};
\draw[decorate,decoration={brace},orange]
    (161,95)--(200,95) node[midway,above]{$j$};
\end{tikzpicture} 
\qquad
\hspace{2cm}
\begin{tikzpicture}[x=0.55pt,y=0.55pt,yscale=-1,xscale=1]


\draw   (100,100) -- (240,100) -- (240,110) -- (100,110) -- cycle ;

\draw   (200,100) -- (200,120) -- (100,120) -- (100,110);

\draw   (160,100) -- (160,130) -- (100,130) -- (100,120);
\draw[blue, very thick] (105,125) -- (155,125)-- (155,115)--(195,115)--(195,105)--(235, 105);
\draw[blue, very thick] (105,105) -- (135,105);
\draw[blue, very thick] (105,115) -- (145,115)--(145,105)--(185,105);
\draw[decorate,decoration={brace},orange]
    (100,95)--(159,95) node[midway,above]{$l$};
\draw[decorate,decoration={brace},orange]
    (161,95)--(200,95) node[midway,above]{$j$};
\end{tikzpicture} 

{\footnotesize $\lambda= (n-2l-j,l+j+1,l-1)$};
\qquad
{\footnotesize $\lambda= (n-2l-j+1,l+j-1,l)$};
\qquad
{\footnotesize $\lambda= (n-2l-j+2,l+j,l-2)$}.

\caption{All special rim hook tabloids of shape $\mu'=(n-2l-j,l+j,l)$ and type $\lambda$.}
\label{6tabloids}
\end{figure}

Following Remark \ref{rmk2}, Theorem $\ref{inv Kostka}$ and Figure \ref{6tabloids}, for $l\in [0,\min\{a,b,c\}]$ and $j\in [0, k-2l]$, we can compute the inverse Kostka number $K\inv_{\lambda,\mu'}$ in Equation \eqref{inv kostka expand} for each $\mu$ and $\lambda$ as follows:

\begin{itemize}
    \item When $\mu={1^n}: K\inv_{\lambda,(n)}=\begin{cases}1 &\text{if}\,\, \lambda=(n)\\0 &\text{otherwise}\end{cases}$ 
    \item When $\mu={2^j1^{n-2j}}: K\inv_{\lambda,(n-j,j)}=$
      $\begin{cases}
    1 & \text{for}\,\, \lambda=(n-j,j) \\
    -1 & \text{for}\,\, \lambda=(n-j+1,j-1)\\
    0 & \text{otherwise}
      \end{cases}$
    \item When $\mu={3^l2^j1^{n-3l-2j}}:K\inv_{\lambda,(n-2l-j,l+j,l)}=$
    $\begin{cases}
    1 & \text{for}\,\,\lambda=(n-2l-j,l+j,l) \\
    1 & \text{for}\,\, \lambda=(n-2l-j+2,l+j-1,l-1) \\
    1 & \text{for}\,\, \lambda=(n-2l-j+1,l+j+1,l-2) \\
    -1 & \text{for}\,\, \lambda=(n-2l-j,l+j+1,l-1) \\
    -1 & \text{for}\,\,\lambda=(n-2l-j+1,l+j-1,l) \\
     -1 & \text{for}\,\, \lambda=(n-2l-j+2,l+j,l-2) \\
     0 & \text{otherwise}
    \end{cases}$
\end{itemize}

Note that the first two cases can be merged into the last one by restricting $\lambda$ in the last case to only valid shape. Namely, if $l=0$ or $j=0$, then $K\inv_{\lambda,\mu'}\neq 0$ provided $\lambda_i\geq 0$ for each $i$.

Now we can rewrite Equation \eqref{inv kostka expand} as 

\begingroup
\allowdisplaybreaks
\begin{align*} 
      & X_G(\textbf{x},q) \\
      =& \sum_{l=0}^{\min\{a,b,c\}}\sum_{j=0}^{k-2l}B_{3^l2^j1^{n-3l-2j}}s_{3^l2^j1^{n-3l-2j}} \\
      =& \sum_{l=0}^{\min\{a,b,c\}}\sum_{j=0}^{k-2l}B_{3^l2^j1^{n-3l-2j}}\big(\sum_\lambda K\inv_{\lambda,(n-2l-j,l+j,l)}e_\lambda \big) \\
      =& \sum_{l=0}^{\min\{a,b,c\}}\sum_{j=0}^{k-2l}B_{3^l2^j1^{n-3l-2j}}\big(e_{(n-2l-j,l+j,l)}+e_{(n-2l-j+2,l+j-1,l-1)}+e_{(n-2l-j+1,l+j+1,l-2)} \\
      & -e_{(n-2l-j+1,l+j-1,l)}-e_{(n-2l-j,l+j+1,l-1)}-e_{(n-2l-j+2,l+j,l-2)}\big) \\
      =& \sum_{l=0}^{\min\{a,b,c\}}\Bigg[\sum_{j=0}^{k-2l}B_{3^l2^j1^{n-3l-2j}}e_{(n-2l-j,l+j,l)}-\sum_{j=0}^{k-2l}B_{3^l2^j1^{n-3l-2j}}e_{(n-2l-j+1,l+j-1,l)}\Bigg] \\
      & +\sum_{l=-1}^{\min\{a,b,c\}-1}\Bigg[\sum_{j=0}^{k-2(l+1)}B_{3^{l+1}2^j1^{n-3l-2j-3}}e_{(n-2l-j,l+j,l)}-\sum_{j=0}^{k-2(l+1)}B_{3^{l+1}2^j1^{n-3l-2j-3}}e_{(n-2l-j-2,l+j+2,l)}\Bigg] \\
      & +\sum_{l=-2}^{\min\{a,b,c\}-2}\Bigg[\sum_{j=0}^{k-2(l+2)}B_{3^{l+2}2^j1^{n-3l-2j-6}}e_{(n-2l-j-3,l+j+3,l)}-\sum_{j=0}^{k-2(l+2)}B_{3^{l+2}2^j1^{n-3l-2j-6}}e_{(n-2l-j-2,l+j+2,l)}\Bigg] \\
      =& \sum_{l=0}^{\min\{a,b,c\}}\Bigg[\sum_{j=0}^{k-2l-1}\big(B_{3^l2^j1^{n-3l-2j}}-B_{3^l2^{j+1}1^{n-3l-2j-2}}\big)e_{(n-2l-j,l+j,l)} \\
      & +B_{3^l2^{k-2l}1^{n-2k+l}}e_{(n-k,k-l,l)}-B_{3^l1^{n-3l}}e_{(n-2l+1,l,l-1)}\Bigg] \\
      & +\sum_{l=-1}^{\min\{a,b,c\}-1}\Bigg[\sum_{j=2}^{k-2l-2}\big(B_{3^{l+1}2^j1^{n-3l-2j-3}}-B_{3^{l+1}2^{j-2}1^{n-3l-2j+1}}\big)e_{(n-2l-j,l+j,l)} \\
      & +B_{3^{l+1}1^{n-3l-3}}e_{(n-2l,l,l)}+B_{3^{l+1}2^11^{n-3l-5}}e_{(n-2l-1,l+1,l)} \\
      & -B_{3^{l+1}2^{k-2l-3}1^{n+l-2k+3}}e_{(n-k+1,k-l-1,l)}-B_{3^{l+1}2^{k-2l-2}1^{n+l-2k+1}}e_{(n-k,k-l,l)}\Bigg] \\
      & +\sum_{l=-2}^{\min\{a,b,c\}-2}\Bigg[\sum_{j=3}^{k-2l-2}\big(B_{3^{l+2}2^{j-3}1^{n-3l-2j}}-B_{3^{l+2}2^{j-2}1^{n-3l-2j-2}}\big)e_{(n-2l-j,l+j,l)} \\
      & +B_{3^{l+2}2^{k-2l-4}1^{n-2k+l+2}}e_{(n-k+1,k-l-1,l)}-B_{3^{l+2}1^{n-3l-6}}e_{(n-2l-2,l+2,l)}\Bigg] \\
       \numberthis \label{expansion2}
      =& \sum_{l=0}^{\min\{a,b,c\}}\Bigg[\sum_{j=0}^{k-2l-1}\big(B_{3^l2^j1^{n-3l-2j}}-B_{3^l2^{j+1}1^{n-3l-2j-2}}\big)e_{(n-2l-j,l+j,l)} \\
      & +B_{3^l2^{k-2l}1^{n-2k+l}}e_{(n-k,k-l,l)}-B_{3^l1^{n-3l}}e_{(n-2l+1,l,l-1)}\Bigg] \\
      & +\sum_{l=0}^{\min\{a,b,c\}}\Bigg[\sum_{j=2}^{k-2l-2}\big(B_{3^{l+1}2^j1^{n-3l-2j-3}}-B_{3^{l+1}2^{j-2}1^{n-3l-2j+1}}\big)e_{(n-2l-j,l+j,l)} \\
      & +B_{3^{l+1}1^{n-3l-3}}e_{(n-2l,l,l)}+B_{3^{l+1}2^11^{n-3l-5}}e_{(n-2l-1,l+1,l)} \\
      & -B_{3^{l+1}2^{k-2l-3}1^{n+l-2k+3}}e_{(n-k+1,k-l-1,l)}-B_{3^{l+1}2^{k-2l-2}1^{n+l-2k+1}}e_{(n-k,k-l,l)}\Bigg] \\
      & +\sum_{l=0}^{\min\{a,b,c\}}\Bigg[\sum_{j=2}^{k-2l-2}\big(B_{3^{l+2}2^{j-3}1^{n-3l-2j}}-B_{3^{l+2}2^{j-2}1^{n-3l-2j-2}}\big)e_{(n-2l-j,l+j,l)} \\
      & +B_{3^{l+2}2^{k-2l-4}1^{n-2k+l+2}}e_{(n-k+1,k-l-1,l)}\Bigg] \\
      =& \sum_{l=0}^{\min\{a,b,c\}}\Bigg[\sum_{j=2}^{k-2l-2}\big(B_{3^l2^j1^{n-3l-2j}}-B_{3^l2^{j+1}1^{n-3l-2j-2}}+B_{3^{l+1}2^j1^{n-3l-2j-3}}-B_{3^{l+1}2^{j-2}1^{n-3l-2j+1}} \\
      & +B_{3^{l+2}2^{j-3}1^{n-3l-2j}}-B_{3^{l+2}2^{j-2}1^{n-3l-2j-2}}\big)e_{(n-2l-j,l+j,l)} \\
      & +\big(B_{3^l1^{n-3l}}-B_{3^l2^11^{n-3l-2}}+B_{3^{l+1}1^{n-3l-3}}\big)e_{(n-2l,l,l)} \\
      & +\big(B_{3^l2^11^{n-3l-2}}-B_{3^l2^21^{n-3l-4}}+B_{3^{l+1}2^11^{n-3l-5}}\big)e_{(n-2l-1,l+1,l)} \\ 
      & +\big(B_{3^l2^{k-2l-1}1^{n+l-2k+2}}-B_{3^l2^{k-2l}1^{n+l-2k}}-B_{3^{l+1}2^{k-2l-3}1^{n+l-2k+3}}+B_{3^{l+2}2^{k-2l-4}1^{n+l-2k+2}}\big)e_{(n-k+1,k-l-1,l)} \\ 
      & +\big(B_{3^l2^{k-2l}1^{n+l-2k}}-B_{3^{l+1}2^{k-2l-2}1^{n+l-2k+1}}\big)e_{(n-k,k-l,l)} \\ 
      & -B_{3^l1^{n-3l}}e_{(n-2l+1,l,l-1)} \Bigg] \\
      \numberthis \label{expansion}
      =& \sum_{l=0}^{\min\{a,b,c\}}\Bigg[\sum_{j=2}^{k-2l}\big(B_{3^l2^j1^{n-3l-2j}}-B_{3^l2^{j+1}1^{n-3l-2j-2}}+B_{3^{l+1}2^j1^{n-3l-2j-3}}-B_{3^{l+1}2^{j-2}1^{n-3l-2j+1}} \\
      & +B_{3^{l+2}2^{j-3}1^{n-3l-2j}}-B_{3^{l+2}2^{j-2}1^{n-3l-2j-2}}\big)e_{(n-2l-j,l+j,l)} \\
      & +\big(B_{3^l1^{n-3l}}-B_{3^l2^11^{n-3l-2}}+B_{3^{l+1}1^{n-3l-3}}\big)e_{(n-2l,l,l)} \\
      & +\big(B_{3^l2^11^{n-3l-2}}-B_{3^l2^21^{n-3l-4}}+B_{3^{l+1}2^11^{n-3l-5}}-B_{3^{l+1}1^{n-3l-3}}\big)e_{(n-2l-1,l+1,l)}\Bigg] 
\end{align*}
\endgroup

Note that in the third line of Equation \eqref{expansion2}, we adjust the range of $l$ since when $l=-1, e_{\lambda}=0$, and when $l=\min\{a,b,c\}$, the corresponding $P$-tableau does not exist by Lemma \ref{P exist}, thus $B_{3^{l+1}2^j1^{n-3l-2j-3}}=B_{3^{l+1}2^{j-2}1^{n-3l-2j+1}}=0$. Similar reasoning can be applied to the last sum in this equation as well as in Equation \eqref{expansion}, where $B_{3^l2^{j+1}1^{n-3l-2j-2}}=0$ when $j=k-2l$, and $B_{3^{l+1}2^j1^{n-3l-2j-3}}=0$ when $j=k-2l$ and $j=k-2l-1$. 

\subsection{The proof of the $e$-positivity of the chromatic symmetric functions}

We prove Conjecture \ref{SWepositive} for certain coefficients when the bounce number is three. In particular, for each coefficient of the elementary function in Equation \eqref{expansion}, we construct a sign reversing involution such that each $P$-tableau with a negative sign is injectively mapped to one with a positive sign. Consequently, the remaining terms is a polynomial in $q$ with positive coefficients.

We first have the following lemma, which is critical in our insertion algorithms for constructing such sign reversing involutions. 

\begin{lemma}\label{lm1}
Given a natural unit interval order $P(\bf{d})$ on $[n]$ with $|\mathbf{m}|=3$. Let $T$ be a $P$-tableau obtained from $P$, and let $L$ be a vertical sequence of consecutive entries in column $j$ of $T$. Denote the entry at the $i$th row in $L$ by $a_{i,j}$. For every $i$, there are at most two indices $s$ such that $s>i$ and $a_{i,j},a_{i+1,j}\cdots,a_{s-1,j}\prec a_{s,j}$.
\end{lemma}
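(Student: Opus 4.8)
The plan is to argue by contradiction, reducing the statement to the single structural fact that a natural unit interval order of bounce number three contains no chain of four elements. This fact is exactly the content behind Remark~\ref{remark 1}: writing $[n]=S_1\sqcup S_2\sqcup S_3$ as in the bounce-path construction, every relation of $P(\mathbf d)$ strictly raises the block index, so a chain can meet each block at most once and therefore has at most $|\mathbf m|=3$ elements. Concretely, for $x\in S_1$ one has $d_x\ge d_1=m_0$, so $x\prec y$ forces $y>m_0$, i.e. $y\in S_2\cup S_3$; for $x\in S_2$ one has $d_x\ge d_{m_0+1}=m_1$, so $x\prec y$ forces $y\in S_3$; and for $x\in S_3$ one has $d_x=n$, so $x$ is maximal. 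Thus $x\prec y$ always implies that the block of $y$ has strictly larger index than the block of $x$, which is what bounds the length of chains.

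With this in hand, fix the row $i$ and call an index $s>i$ \emph{dominating} if $a_{t,j}\prec a_{s,j}$ for every $t\in\{i,i+1,\dots,s-1\}$; the lemma asserts there are at most two dominating indices. I would suppose, for contradiction, that three dominating indices $s_1<s_2<s_3$ exist, all exceeding $i$, and then extract a $4$-chain from the four entries $a_{i,j},a_{s_1,j},a_{s_2,j},a_{s_3,j}$. The point is that each dominating index lies in the domination range of every larger one: since $i<s_1<s_2<s_3$, we have $i\in\{i,\dots,s_1-1\}$, so $a_{i,j}\prec a_{s_1,j}$; we have $s_1\in\{i,\dots,s_2-1\}$, so $a_{s_1,j}\prec a_{s_2,j}$; and we have $s_2\in\{i,\dots,s_3-1\}$, so $a_{s_2,j}\prec a_{s_3,j}$. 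By transitivity of $\prec$ this yields the strictly increasing chain $a_{i,j}\prec a_{s_1,j}\prec a_{s_2,j}\prec a_{s_3,j}$, whose four entries are distinct because they occupy distinct cells of $T$, and each element of $P$ appears exactly once in a $P$-tableau. A chain of four elements contradicts the bound above, so no three dominating indices can coexist, and at most two exist.

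The only genuinely new observation is that the domination conditions chain together under transitivity; once that is noticed, the length-three bound on chains does all the remaining work, so I do not expect a serious obstacle. The routine parts are (i) checking that each dominating index sits inside the domination window of every larger one, which is immediate from $i<s_1<s_2<s_3$, and (ii) the block-monotonicity of $\prec$, which is read directly off the Dyck-path description ($x\prec y$ iff the cell $(x,y)$ lies above $\mathbf d$) together with the definitions of $m_0,m_1,m_2$ and $S_1,S_2,S_3$. Notably, condition~(3) in the definition of a $P$-tableau is not needed here: the argument is purely poset-theoretic, using only that the listed entries are distinct elements of $P$ and that bounce number three caps chain length at three.
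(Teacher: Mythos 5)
Your proof is correct and follows essentially the same route as the paper's: both rest on the observation that the domination windows chain together (so the dominating entries form a chain) combined with the chain-length bound of Remark \ref{remark 1} forced by $|\mathbf{m}|=3$. The only cosmetic difference is that the paper assumes two such indices and uses the block structure $S_1\prec S_2\prec S_3$ to conclude the top entry lies in $S_3$ and is maximal, whereas you assume three indices and contradict the length-three cap directly via a $4$-chain.
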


\begin{proof}
Following the notations defined in Figure \ref{b(d)=3}, we have the fact that $S_1\prec S_3$. Now suppose that for some $i$, there exist two indices $s$ satisfying the chain relations described in the lemma, denote them by $s_1$ and $s_2$. Without loss of generality, we can assume $s_1<s_2$. Then we have $a_{i,j},a_{i+1,j}\cdots,a_{s_1-1,j}\prec a_{s_1,j}$ and $a_{i,j},\cdots,a_{s_1,j},\cdots,a_{s_2-1,j}\prec a_{s_2,j}$. Since $|\mathbf{m}|=3$, so the length of a poset chain is at most three by Remark \ref{remark 1}. It is necessary that $a_{i,j},\cdots,a_{s_1-1,j}\in S_1, a_{s_1,j}\in S_2$, and $a_{s_2,j}\in S_3$ such that $a_{i,j}\prec a_{s_1,j}\prec a_{s_2,j}$, thus no element is greater than $a_{s_2,j}$ in $P$. The result follows.
\end{proof}

\begin{theorem} \label{e_{(n-2l,l,l)}}
Let $P$ be a natural unit interval order on $[n]$ such that $P\simeq P(\mathbf{d})$ for Dyck path $\mathbf{d}$ with $|\mathbf{m}|=3$, and $G=\inc(P)$. In the expansion of $X_G(\mathbf{x},q)$ in terms of the elementary symmetric functions, $[e_{(n-2l,l,l)}]X_G(\mathbf{x},q)$ for $l\in[0,\min\{a,b,c\}]$ are in $\mathbb{N}[q]$.
\end{theorem}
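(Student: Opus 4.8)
The plan is to read off the relevant coefficient from Equation \eqref{expansion} and then realize it as the generating function of an inversion-preserving injection. Fixing $\lambda=(n-2l,l,l)$, the only contribution in \eqref{expansion} comes from the ``$e_{(n-2l,l,l)}$'' line (the shifted sums start at $j=2$ and the second part $l$ pins down the outer index), so
\[
[e_{(n-2l,l,l)}]X_G(\mathbf{x},q)=B_{3^l1^{n-3l}}-B_{3^l2^11^{n-3l-2}}+B_{3^{l+1}1^{n-3l-3}}.
\]
Write $\mathcal{A}$, $\mathcal{B}$, $\mathcal{C}$ for the sets of $P$-tableaux of shapes $3^l1^{n-3l}$, $3^l2^11^{n-3l-2}$, $3^{l+1}1^{n-3l-3}$ respectively, so that $B_\mu=\sum_{\sh(T)=\mu}q^{\iv(T)}$. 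Since the negative term is carried by $\mathcal{B}$, it suffices to construct an injection $\Phi\colon\mathcal{B}\hookrightarrow\mathcal{A}\sqcup\mathcal{C}$ with $\iv(\Phi(T))=\iv(T)$ for every $T$. Given such a $\Phi$, for each power $q^m$ the coefficient in $B_{3^l1^{n-3l}}+B_{3^{l+1}1^{n-3l-3}}$ dominates that in $B_{3^l2^11^{n-3l-2}}$, and the un-hit tableaux in $\mathcal{A}\sqcup\mathcal{C}$ supply the remaining nonnegative coefficients; hence $[e_{(n-2l,l,l)}]X_G\in\mathbb{N}[q]$.

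To build $\Phi$ I focus on the unique protruding box of a tableau $T\in\mathcal{B}$, the cell $(l+1,2)$; write $w$ for its entry and $v$ for the entry of $(l+1,1)$ immediately to its left, so $v\prec w$. Comparing column lengths shows that the two target shapes each differ from $\sh(T)$ in one box: passing to $\mathcal{A}$ deletes $(l+1,2)$ and lengthens the first column by one, while passing to $\mathcal{C}$ keeps $(l+1,2)$, creates the box $(l+1,3)$, and shortens the first column by one. Accordingly I define $\Phi$ by a single slide: attempt to insert $w$ down the first column (removing it from column $2$) so as to land in $\mathcal{A}$; if this slide is obstructed, in the sense that the resulting column would violate the $P$-tableau conditions (2)--(3), then instead pull the appropriate first-column entry $u$ (with $w\prec u$) up into the new box $(l+1,3)$, landing in $\mathcal{C}$. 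Lemma \ref{lm1} is exactly what makes this dichotomy well defined: along any vertical run of entries there are at most two positions to which a given entry can legally be attached by a $\prec$-chain, so the slide either terminates cleanly inside column $1$ or is forced into column $3$, with no third possibility, and the same bound pinpoints which single box moves so that inversions can be matched one-for-one.

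The final step is injectivity together with disjointness of the two branches; since $\mathcal{A}$ and $\mathcal{C}$ have different shapes, it suffices to reverse $\Phi$ on each branch. From a tableau in the image one recovers $T$ by reversing the slide—reading off the location of the extra column-$1$ box (resp. the column-$3$ box) and moving its entry back to $(l+1,2)$—and Lemma \ref{lm1} again guarantees the reverse slide is unambiguous. I expect the main obstacle to be precisely this simultaneous bookkeeping: one must show at once that the slide always outputs a legal $P$-tableau of the advertised shape, that it leaves $\iv$ unchanged, and that the two branches are individually injective, and these three demands interact only through the chain structure controlled by Lemma \ref{lm1}. The boundary cases $l=0$ and $l=\min\{a,b,c\}$, where one of $\mathcal{A},\mathcal{B},\mathcal{C}$ may be empty (as noted after \eqref{expansion}), will be checked separately but should be routine.
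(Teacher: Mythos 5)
Your reduction to Equation \eqref{expansion} and your overall strategy (an inversion-preserving injection $\Phi\colon\mathcal{B}\hookrightarrow\mathcal{A}\sqcup\mathcal{C}$ from the negative-shape tableaux into the two positive-shape families) is exactly the paper's plan, but the map you define does not work, for two concrete reasons. First, your case distinction is triggered by the downward slide being ``obstructed,'' i.e.\ producing a column that violates the $P$-tableau conditions, and this never happens: writing $w=a_{l+1,2}$, if you insert $w$ at the smallest $m\in[l+2,n-2l-1]$ with $a_{m,1}\nprec w$ (or at the bottom when no such $m$ exists), then every entry above the insertion point satisfies $a_{i,1}\prec w$ (including $a_{l+1,1}\prec w$, the row condition of $T$), so conditions (2)--(3) hold automatically. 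Hence your $\Phi$ always lands in $\mathcal{A}$, and it is then not injective: if the first column of $T$ contains an entry $a_{s,1}$ with $a_{l+1,1},\dots,a_{s-1,1}\prec a_{s,1}\prec w$, the image $U\in\mathcal{A}$ has two ``pull-up candidates,'' namely $a_{s,1}$ and $w$ (the two indices permitted by Lemma \ref{lm1}), and pulling either one up to $(l+1,2)$ yields a valid preimage in $\mathcal{B}$, so two distinct tableaux share the image $U$. The existence of such a chain-dominating $a_{s,1}\prec w$ --- not any failure of validity --- is the correct trigger for the second branch, and it is exactly the condition the paper uses.

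Second, your fallback branch into $\mathcal{C}$ cannot be executed precisely when it is needed. You propose to keep $w$ at $(l+1,2)$ and pull a first-column entry $u$ with $w\prec u$ into the new box $(l+1,3)$. But in the problematic case above, $a_{l+1,1}\prec a_{s,1}\prec w$ is a $3$-chain; since $|\mathbf{m}|=3$, the argument in the proof of Lemma \ref{lm1} shows the top of such a chain lies in $S_3$ and no element of $P$ is greater than it. Thus no $u$ with $w\prec u$ exists in the tableau at all, and your $\mathcal{C}$-branch is vacuous. The paper's map does the opposite: it moves the chain-dominating entry $a_{s,1}$ (which satisfies $a_{s,1}\prec w$) into position $(l+1,2)$ and shifts $w$ right to $(l+1,3)$, producing the legal row $a_{l+1,1}\prec a_{s,1}\prec w$; uniqueness of $a_{s,1}$ (again Lemma \ref{lm1}, since two such entries would give a $4$-chain) makes that branch injective, and the downward slide is reserved for tableaux with no such $a_{s,1}$, which removes the ambiguity in the first branch. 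Your coefficient extraction and shape bookkeeping are fine, but the involution itself has to be rebuilt along these lines.
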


\begin{proof}
We obtain the expansion of $X_G(\mathbf{x},q)$ in Equation \eqref{expansion}. We prove the $e$-positivity for the term $e_{(n-2l,l,l)}$ by showing that $(B_{3^l1^{n-3l}}-B_{3^l2^11^{n-3l-2}}+B_{3^{l+1}1^{n-3l-3}})$ is a polynomial in $q$ with positive coefficients. In particular, we define an inversion preserving injective map $\alpha: \{T^-\in\PT(G)|\sh(T^-)=3^l2^11^{n-3l-2}\}\mapsto\{T^+\in\PT(G)|\sh(T^+)=3^l1^{n-3l}\,\, \text{or}\,\, 3^{l+1}1^{n-3l-3}\}$ as follows. 

Let $T$ be in the domain of $\alpha$, and let $a_{i,j}$ be the entry at the $i$th row and $j$th column of $T$. For $s\in[l+2,n-2l-1]$, if there exists an entry $a_{s,1}$ in the first column such that $a_{l+1,1},a_{l+2,1},\cdots,a_{s-1,1}\prec a_{s,1}\prec a_{l+1,2}$, then we move $a_{s,1}$ to the second position of row $l+1$ and move $a_{l+1,2}$ right after $a_{s,1}$ in the same row. See Figure \ref{coeff1} (left). Note that in this case, such $a_{s,1}$ is unique (if exists) by Lemma \ref{lm1}, otherwise $a_{l+1,1}\prec a_{s_1,1}\prec a_{s_2,1}\prec a_{l+1,2}$ gives a 4-chain in $P$. Moreover, $\{a_{s-1,1}\}\in S_1$, $\{a_{l,2},a_{s,1}\}\in S_2$, $\{a_{l,3},a_{l+1,2}\}\in S_3$, we are guaranteed to obtain a valid $P$-tableau $\alpha(T)$. If such $a_{s,1}$ does not exist, then we find the smallest $m\in[l+2,n-2l-1]$ such that $a_{m,1}\nprec a_{l+1,2}$ and insert $a_{l+1,2}$ above $a_{m,1}$. See Figure \ref{coeff1} (right). If no such $a_{m,1}$ exists, that is, $\{a_{l+1,1},a_{l+2,1},\cdots,a_{n-2l-1,1}\}\prec a_{l+1,2}$, then we simply move $a_{l+1,2}$ to the bottom of the first column. In all cases, for any two cells that are incomparable in the poset $P$, the relative position of above and below remains unchanged, preserving all inversions throughout the map. Moreover, from the way we construct this map, it is not hard to see that all resulting $P$-tableaux are distinct, thus the map is injective.

\begin{figure}[h]
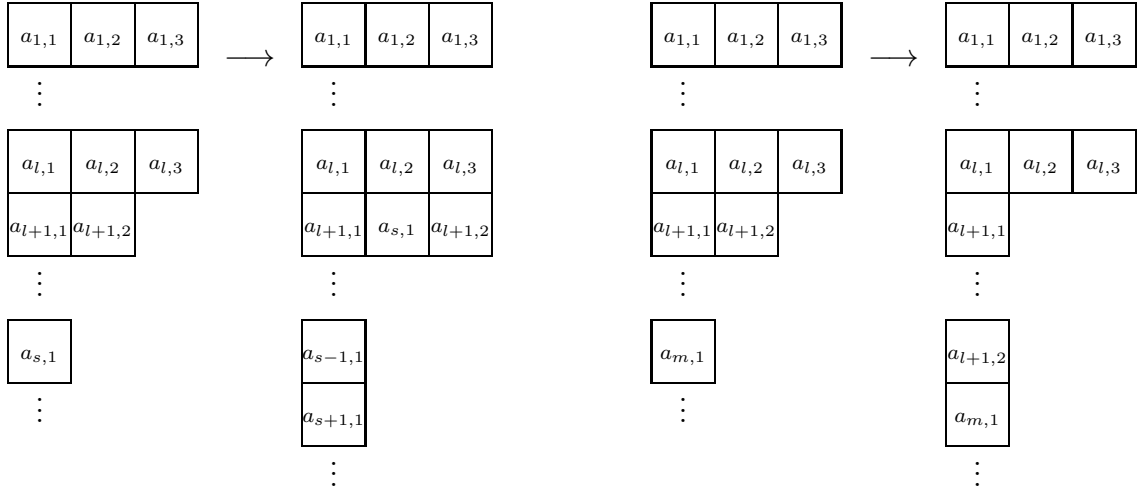

   \centering
   \ytableausetup{mathmode, boxsize=2em}
\begin{ytableau}
\scriptstyle a_{1,1} & \scriptstyle a_{1,2} & \scriptstyle a_{1,3} \\
\none[\vdots]  \\
\scriptstyle a_{l,1} & \scriptstyle a_{l,2} & \scriptstyle a_{l,3} \\
\scriptstyle a_{l+1,1} & \scriptstyle a_{l+1,2} \\
\none[\vdots]  \\
\scriptstyle a_{s,1} \\
\none[\vdots]  \\
\end{ytableau}\,\,\,
$\longrightarrow\,\,\,$
\begin{ytableau}
\scriptstyle a_{1,1} & \scriptstyle a_{1,2} & \scriptstyle a_{1,3} \\
\none[\vdots]  \\
\scriptstyle a_{l,1} & \scriptstyle a_{l,2} & \scriptstyle a_{l,3} \\
\scriptstyle a_{l+1,1} & \scriptstyle a_{s,1} & \scriptstyle a_{l+1,2} \\
\none[\vdots]  \\
\scriptstyle a_{s-1,1} \\
\scriptstyle a_{s+1,1} \\
\none[\vdots]  \\
\end{ytableau}
\qquad
\hspace{10mm}
\begin{ytableau}
\scriptstyle a_{1,1} & \scriptstyle a_{1,2} & \scriptstyle a_{1,3} \\
\none[\vdots]  \\
\scriptstyle a_{l,1} & \scriptstyle a_{l,2} & \scriptstyle a_{l,3} \\
\scriptstyle a_{l+1,1} & \scriptstyle a_{l+1,2} \\
\none[\vdots]  \\
\scriptstyle a_{m,1} \\
\none[\vdots]  \\
\end{ytableau}\,\,\,
$\longrightarrow\,\,\,$
\begin{ytableau}
\scriptstyle a_{1,1} & \scriptstyle a_{1,2} & \scriptstyle a_{1,3} \\
\none[\vdots]  \\
\scriptstyle a_{l,1} & \scriptstyle a_{l,2} & \scriptstyle a_{l,3} \\
\scriptstyle a_{l+1,1} \\
\none[\vdots]  \\
\scriptstyle a_{l+1,2} \\
\scriptstyle a_{m,1} \\
\none[\vdots]  \\
\end{ytableau}
\qquad
\caption{The injective map $\alpha$.}
\label{coeff1}  
\end{figure}
\end{proof}

\begin{theorem} \label{e_{(n-2l-1,l+1,l)}}
Let $P$ be a natural unit interval order on $[n]$ such that $P\simeq P(\mathbf{d})$ for Dyck path $\mathbf{d}$ with $|\mathbf{m}|=3$, and $G=\inc(P)$. In the expansion of $X_G(\mathbf{x},q)$ in terms of the elementary symmetric functions, $[e_{(n-2l-1,l+1,l)}]X_G(\mathbf{x},q)$ for $l\in[0,\min\{a,b,c\}]$ are in $\mathbb{N}[q]$.
\end{theorem}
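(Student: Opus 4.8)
The starting point is to read off from Equation \eqref{expansion} that
\[
[e_{(n-2l-1,l+1,l)}]X_G(\mathbf{x},q)=B_{3^l2^11^{n-3l-2}}-B_{3^l2^21^{n-3l-4}}+B_{3^{l+1}2^11^{n-3l-5}}-B_{3^{l+1}1^{n-3l-3}},
\]
so that, exactly as in Theorem \ref{e_{(n-2l,l,l)}}, it suffices to produce an inversion-preserving injection from the tableaux of the two negatively-signed shapes $3^l2^21^{n-3l-4}$ and $3^{l+1}1^{n-3l-3}$ into the tableaux of the two positively-signed shapes $3^l2^11^{n-3l-2}$ and $3^{l+1}2^11^{n-3l-5}$. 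Since preserving inversions means never altering the relative vertical order of two incomparable entries, such an injection forces each $q$-coefficient of the difference to be nonnegative, giving membership in $\mathbb{N}[q]$.

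First I would record the structural observation that all four shapes coincide outside rows $l+1$ and $l+2$: letting $x\in\{0,1\}$ indicate whether the cell $(l+1,3)$ is filled and $y\in\{0,1\}$ whether $(l+2,2)$ is filled, the four shapes $3^l2^11^{n-3l-2},\,3^l2^21^{n-3l-4},\,3^{l+1}2^11^{n-3l-5},\,3^{l+1}1^{n-3l-3}$ realize $(x,y)=(0,0),(0,1),(1,1),(1,0)$, and the attached sign is precisely $(-1)^{x+y}$. This reduces the construction to two local \emph{toggle} moves, each flipping exactly one of $x,y$ and hence the sign. The first is the promotion move of the map $\alpha$ applied in row $l+1$: if there is a first-column entry $a_{s,1}$ with $a_{l+1,1},\dots,a_{s-1,1}\prec a_{s,1}\prec a_{l+1,2}$, slide it into $(l+1,2)$ and push $a_{l+1,2}$ into $(l+1,3)$, toggling $x$; Lemma \ref{lm1} guarantees this $a_{s,1}$ is unique, so the move and its reverse are well defined. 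The second move toggles $y$ by sending $a_{l+2,2}$ down into the first column at the canonical slot (the smallest row $m$ with $a_{m,1}\nprec a_{l+2,2}$, or the bottom), exactly as in the second case of $\alpha$, together with its inverse.

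Next I would glue the two toggles into a single map by a priority rule — attempt the promotion toggle first and fall back to the $(l+2,2)$ toggle — and check that the result is a sign-reversing involution on the union of all four tableau-classes, equivalently the desired injection on the two negative classes. The validity of each image as a $P$-tableau is verified at the few moved cells using the chain bound $|\mathbf{m}|=3$ and the relation $S_1\prec S_3$, precisely the ingredients of Lemma \ref{lm1}; inversions are preserved because every move only slides an entry horizontally within its row or relocates it within the first column without overtaking an incomparable entry. The fixed points are then the tableaux to which neither toggle applies, and one concludes by showing these all have one of the two positive shapes, so that $[e_{(n-2l-1,l+1,l)}]X_G(\mathbf{x},q)=\sum_{T\ \mathrm{fixed}}q^{\iv(T)}\in\mathbb{N}[q]$.

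I expect the crux to be the proof that no negatively-signed tableau is a fixed point, i.e.\ that every tableau of shape $3^l2^21^{n-3l-4}$ or $3^{l+1}1^{n-3l-3}$ admits at least one applicable toggle producing a valid $P$-tableau, together with the compatibility needed for the priority rule to be injective. The difficulty is that the two toggles both act in rows $l+1$ and $l+2$ and can compete for the same entries — the entry released by the $(l+2,2)$ move may be exactly the entry the promotion move looks for, and conversely — so the boundary cases, when the portion of the first column below row $l+1$ is short or consists entirely of entries $\prec a_{l+1,2}$, require a careful case analysis to guarantee simultaneously applicability, validity, and that two distinct negative tableaux never collide in the image.
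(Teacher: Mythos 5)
Your reduction via Equation \eqref{expansion} and your sign analysis of the four shapes (the $(x,y)$-toggle picture with sign $(-1)^{x+y}$) are both correct, and the two local moves you propose are the right raw ingredients. But the proposal stops exactly where the proof has to start, and one of its load-bearing claims is false. You assert that inversions are preserved because every move ``only slides an entry horizontally within its row or relocates it within the first column without overtaking an incomparable entry.'' This fails precisely in the configurations that make this coefficient harder than the one in Theorem \ref{e_{(n-2l,l,l)}}: when row $l+2$ has two cells, relocating $a_{l+1,2}$ or $a_{l+2,2}$ necessarily moves it past the incomparable entries $a_{l+2,1}$, $a_{l+2,2}$, so inversions are both destroyed and created, and one must add compensating swaps (e.g.\ exchanging $a_{l+1,3}$ with $a_{l+2,2}$, or bumping $a_{l+2,2}$ into the third column) chosen according to which of the relations $a_{l+2,2}\prec a_{l+1,3}$, $a_{l+1,2}\prec a_{l+2,2}$, $a_{l+2,1}\prec a_{l+1,2}$ hold, so that $\iv$ is preserved only in total, not move-by-move. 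Likewise, your priority rule must satisfy the consistency condition that the toggle producing $\iota(T)$ is again the first applicable toggle at $\iota(T)$; since both toggles act on rows $l+1$ and $l+2$ and can compete for the same entry, this is the actual crux. You name both difficulties in your final paragraph but do not resolve them, so the entire case analysis that constitutes the proof is missing.

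For comparison, the paper does not build one direct injection from the two negative classes into the two positive ones, and the detour it takes is instructive. It constructs an inversion-preserving injection $f$ from the negative shape $3^{l+1}1^{n-3l-3}$ into the positive shape $3^l2^11^{n-3l-2}$, but then an injection $g$ in the \emph{opposite} direction, from the positive shape $3^{l+1}2^11^{n-3l-5}$ into the negative shape $3^l2^21^{n-3l-4}$, handled in four subcases with exactly the compensating swaps described above. This leaves two complements $A=\{\sh=3^l2^11^{n-3l-2}\}\setminus\im(f)$ and $B=\{\sh=3^l2^21^{n-3l-4}\}\setminus\im(g)$, which are characterized explicitly by nonexistence-of-chain conditions, and a third inversion-preserving injection $\phi:B\to A$ (plus an adjusted map $\phi'$ for one residual configuration in which $\phi$ would collide with $\im(f)$) completes the argument. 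That three-map structure, together with the explicit characterizations of $A$ and $B$, is precisely the machinery that tames the competition between your two toggles; until you supply it, or an equivalent case analysis establishing validity of the images, preservation of $\iv$, injectivity, and disjointness, your outline is a plan rather than a proof.
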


\begin{proof}
Using Equation \ref{expansion}, we want to show that $(B_{3^l2^11^{n-3l-2}}-B_{3^l2^21^{n-3l-4}}+B_{3^{l+1}2^11^{n-3l-5}}-B_{3^{l+1}1^{n-3l-3}})$ is a polynomial in $q$ with positive coefficients.

Define two maps 
\[f: \Big\{T^-\in\PT(G)|\sh(T^-)=3^{l+1}1^{n-3l-3}\Big\}\mapsto \Big\{T^+\in\PT(G)|\sh(T^+)=3^l2^11^{n-3l-2}\Big\}\]
and
\[g: \Big\{T^+\in\PT(G)|\sh(T^+)=3^{l+1}2^11^{n-3l-5}\Big\}\mapsto \Big\{T^-\in\PT(G)|\sh(T^-)=3^l2^21^{n-3l-4}\Big\}\]

For map $f$, let $T_1$ be in the domain of $f$, we find the smallest $s$ for $s\in[l+2,n-2l-2]$ such that $a_{s,1}\nprec a_{l+1,2}$, then insert $a_{l+1,2}$ above $a_{s,1}$, and move $a_{l+1,3}$ left by one cell. See Figure \ref{f}. If there does not exist such $a_{s,1}$, then we simply move $a_{l+1,2}$ to the bottom of the first column. Further, the resulting tableau $f(T_1)$ is a valid $P$-tableau since $a_{l+1,1}\prec a_{l+1,3}\nprec a_{l,2}$. In addition, since $a_{l+1,2}\in S_2$, inserting different $a_{l+1,2}$ into the first column yields distinct $P$-tableaux. Thus map $f$ is injective, and it is straightforward to check that all inversions are preserved. 

\begin{figure}[h]
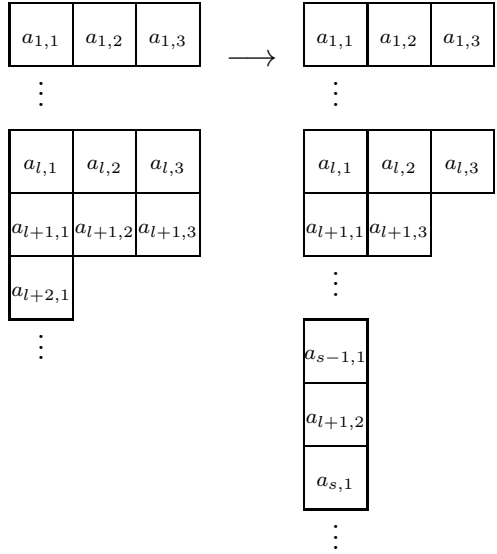

   \centering
   \ytableausetup{mathmode, boxsize=2em}
\begin{ytableau}
\scriptstyle a_{1,1} & \scriptstyle a_{1,2} & \scriptstyle a_{1,3} \\
\none[\vdots]  \\
\scriptstyle a_{l,1} & \scriptstyle a_{l,2} & \scriptstyle a_{l,3} \\
\scriptstyle a_{l+1,1} & \scriptstyle a_{l+1,2} & \scriptstyle a_{l+1,3} \\
\scriptstyle a_{l+2,1} \\
\none[\vdots]  \\
\end{ytableau}\,\,\,
$\longrightarrow\,\,\,$
\begin{ytableau}
\scriptstyle a_{1,1} & \scriptstyle a_{1,2} & \scriptstyle a_{1,3} \\
\none[\vdots]  \\
\scriptstyle a_{l,1} & \scriptstyle a_{l,2} & \scriptstyle a_{l,3} \\
\scriptstyle a_{l+1,1} &\scriptstyle a_{l+1,3} \\
\none[\vdots]  \\
\scriptstyle a_{s-1,1} \\
\scriptstyle a_{l+1,2} \\
\scriptstyle a_{s,1} \\
\none[\vdots]  \\
\end{ytableau}
\qquad
\caption{The injective map $f$.}
\label{f}  
\end{figure}

For map $g$, let $T_2$ be in the domain of $g$, we discuss two cases for entries of $T_2$. 
\begin{itemize}
    \item Case 1: $a_{l+2,2}\prec a_{l+1,3}$.

    We have $a_{l+1,1}, a_{l+2,1}\in S_1$ and $a_{l+1,2},a_{l+2,2}\in S_2$ and $a_{l+1,3}\in S_3$, thus $a_{l+2,1}\prec a_{l+1,3}$. In this case, we bump $a_{l+2,2}$ with $a_{l+1,3}$, and find the smallest $s$ for $s\in[l+3,n-2l-3]$ such that $a_{s,1}\nprec a_{l+2,2}$, then insert $a_{l+2,2}$ above $a_{s,1}$. See Figure \ref{g} (left). If there does not exist such $a_{s,1}$, then we simply move $a_{l+2,2}$ to the bottom of the first column. Again all inversions are preserved. \\

    \item Case 2: $a_{l+2,2}\nprec a_{l+1,3}$.

    We consider three sub-cases. 
    \begin{itemize}
        \item Case 2a: If $a_{l+1,2}\nprec a_{l+2,2}$ and $a_{l+2,1}\nprec a_{l+1,2}$, find the smallest $s$ for $s\in[l+3,n-2l-3]$ such that $a_{s,1}\nprec a_{l+1,2}$ and insert $a_{l+1,2}$ above $a_{s,1}$. Move $a_{l+1,3}$ left by one cell. See Figure \ref{g} (right). We can carefully check inversions in this case. Since $a_{l+1,2}\prec a_{l+1,3}$ while $a_{l+2,2}\nprec a_{l+1,3}$, then $a_{l+1,2}<a_{l+2,2}$. Similarly, since $a_{l+2,1}\prec a_{l+2,2}$ while $a_{l+1,2}\nprec a_{l+2,2}$, then $a_{l+2,1}<a_{l+1,2}$. By inserting $a_{l+1,2}$ in a position below both $a_{l+2,1}$ and $a_{l+2,2}$, the previous inversion $(a_{l+1,2},a_{l+2,1})$ is cancelled while a new inversion $(a_{l+2,2},a_{l+1,2})$ is created, resulting in an unchanged total number of inversions in $g(T_2)$.\\
        \item Case 2b: If $a_{l+1,2}\nprec a_{l+2,2}$ and $a_{l+2,1}\prec a_{l+1,2}$, we conduct the same insertion algorithm as described in Case 2a, and in addition, we switch $a_{l+1,3}$ and $a_{l+2,2}$. See Figure \ref{g2} (left). Note that since $a_{l+1,2}\prec a_{l+1,3}$ while $a_{l+1,2}\nprec a_{l+2,2}$, we have $a_{l+2,2}<a_{l+1,3}$. Switching $a_{l+1,3}$ and $a_{l+2,2}$ will help to decease one inversion, offsetting the new inversion $(a_{l+2,2},a_{l+1,2})$ created through the map. \\
        \item Case 2c: If $a_{l+1,2}\prec a_{l+2,2}$, then find the smallest $s$ for $s\in[l+2,n-3l-5]$ such that $a_{s,1}\nprec a_{l+1,2}$ and insert $a_{l+1,2}$ above $a_{s,1}$. Move $a_{l+1,3}$ left by one cell. See Figure \ref{g2} (right). All inversions remain unvaried in this case.
    \end{itemize}
\end{itemize}

We can compare the image of the map $g$ under each case, and it is straightforward to check that they are all distinct, thus $g$ is injective.

\begin{figure}[h]
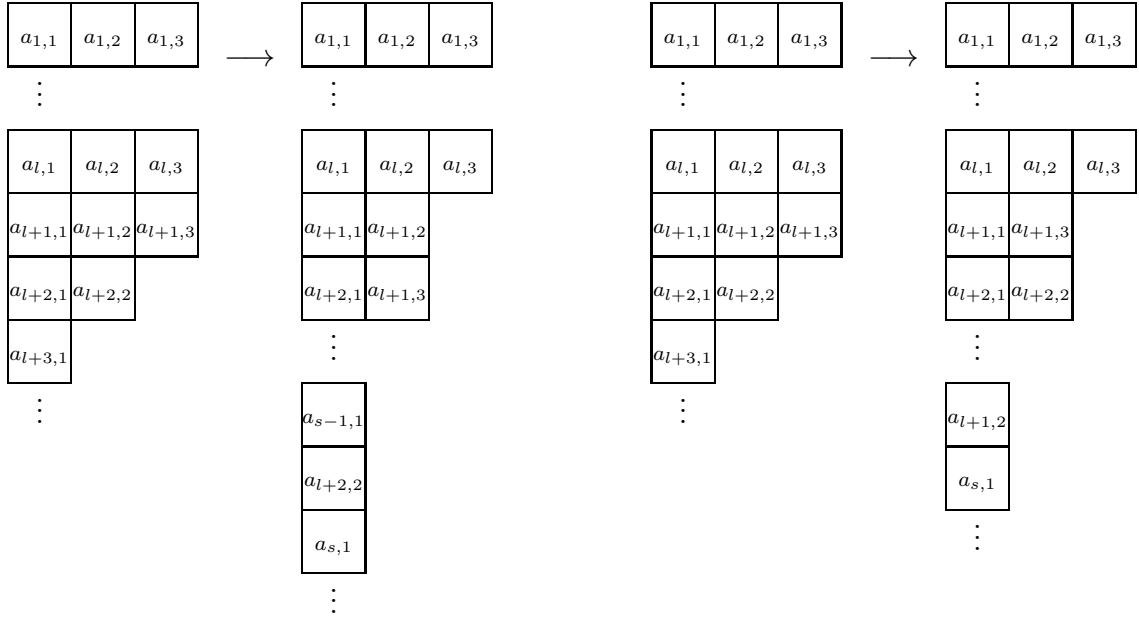

   \centering
   \ytableausetup{mathmode, boxsize=2em}
\begin{ytableau}
\scriptstyle a_{1,1} & \scriptstyle a_{1,2} & \scriptstyle a_{1,3} \\
\none[\vdots]  \\
\scriptstyle a_{l,1} & \scriptstyle a_{l,2} & \scriptstyle a_{l,3} \\
\scriptstyle a_{l+1,1} & \scriptstyle a_{l+1,2} & \scriptstyle a_{l+1,3} \\
\scriptstyle a_{l+2,1} & \scriptstyle a_{l+2,2}\\
\scriptstyle a_{l+3,1} \\
\none[\vdots]  \\
\end{ytableau}\,\,\,
$\longrightarrow\,\,\,$
\begin{ytableau}
\scriptstyle a_{1,1} & \scriptstyle a_{1,2} & \scriptstyle a_{1,3} \\
\none[\vdots]  \\
\scriptstyle a_{l,1} & \scriptstyle a_{l,2} & \scriptstyle a_{l,3} \\
\scriptstyle a_{l+1,1} &\scriptstyle a_{l+1,2} \\
\scriptstyle a_{l+2,1} & \scriptstyle a_{l+1,3}\\
\none[\vdots]  \\
\scriptstyle a_{s-1,1} \\
\scriptstyle a_{l+2,2} \\
\scriptstyle a_{s,1} \\
\none[\vdots]  \\
\end{ytableau}
\qquad
\hspace{10mm}
\begin{ytableau}
\scriptstyle a_{1,1} & \scriptstyle a_{1,2} & \scriptstyle a_{1,3} \\
\none[\vdots]  \\
\scriptstyle a_{l,1} & \scriptstyle a_{l,2} & \scriptstyle a_{l,3} \\
\scriptstyle a_{l+1,1} & \scriptstyle a_{l+1,2} & \scriptstyle a_{l+1,3}\\
\scriptstyle a_{l+2,1} & \scriptstyle a_{l+2,2}\\
\scriptstyle a_{l+3,1} \\
\none[\vdots]  \\
\end{ytableau}\,\,\,
$\longrightarrow\,\,\,$
\begin{ytableau}
\scriptstyle a_{1,1} & \scriptstyle a_{1,2} & \scriptstyle a_{1,3} \\
\none[\vdots]  \\
\scriptstyle a_{l,1} & \scriptstyle a_{l,2} & \scriptstyle a_{l,3} \\
\scriptstyle a_{l+1,1} & \scriptstyle a_{l+1,3}\\
\scriptstyle a_{l+2,1} & \scriptstyle a_{l+2,2}\\
\none[\vdots]  \\
\scriptstyle a_{l+1,2} \\
\scriptstyle a_{s,1} \\
\none[\vdots]  \\
\end{ytableau}
\qquad
\caption{The injective map $g$ for case 1 (left) and case $2(a)$ (right).}
\label{g}  
\end{figure}

\begin{figure}[h]
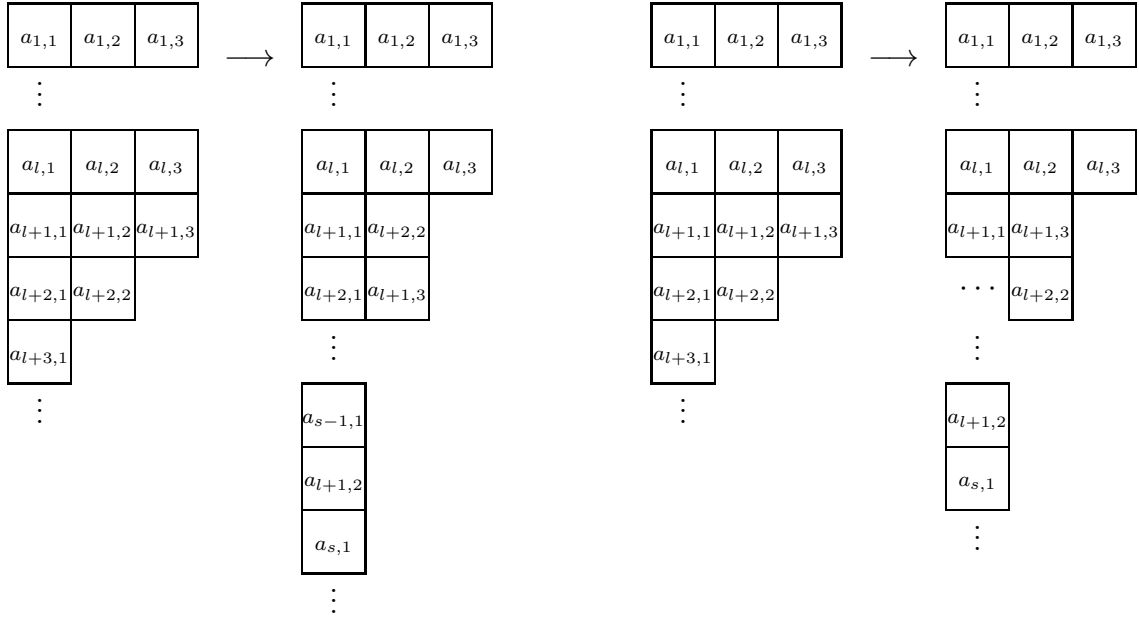

   \centering
   \ytableausetup{mathmode, boxsize=2em}
\begin{ytableau}
\scriptstyle a_{1,1} & \scriptstyle a_{1,2} & \scriptstyle a_{1,3} \\
\none[\vdots]  \\
\scriptstyle a_{l,1} & \scriptstyle a_{l,2} & \scriptstyle a_{l,3} \\
\scriptstyle a_{l+1,1} & \scriptstyle a_{l+1,2} & \scriptstyle a_{l+1,3} \\
\scriptstyle a_{l+2,1} & \scriptstyle a_{l+2,2}\\
\scriptstyle a_{l+3,1} \\
\none[\vdots]  \\
\end{ytableau}\,\,\,
$\longrightarrow\,\,\,$
\begin{ytableau}
\scriptstyle a_{1,1} & \scriptstyle a_{1,2} & \scriptstyle a_{1,3} \\
\none[\vdots]  \\
\scriptstyle a_{l,1} & \scriptstyle a_{l,2} & \scriptstyle a_{l,3} \\
\scriptstyle a_{l+1,1} &\scriptstyle a_{l+2,2} \\
\scriptstyle a_{l+2,1} & \scriptstyle a_{l+1,3}\\
\none[\vdots]  \\
\scriptstyle a_{s-1,1} \\
\scriptstyle a_{l+1,2} \\
\scriptstyle a_{s,1} \\
\none[\vdots]  \\
\end{ytableau}
\qquad
\hspace{10mm}
\begin{ytableau}
\scriptstyle a_{1,1} & \scriptstyle a_{1,2} & \scriptstyle a_{1,3} \\
\none[\vdots]  \\
\scriptstyle a_{l,1} & \scriptstyle a_{l,2} & \scriptstyle a_{l,3} \\
\scriptstyle a_{l+1,1} & \scriptstyle a_{l+1,2} & \scriptstyle a_{l+1,3}\\
\scriptstyle a_{l+2,1} & \scriptstyle a_{l+2,2}\\
\scriptstyle a_{l+3,1} \\
\none[\vdots]  \\
\end{ytableau}\,\,\,
$\longrightarrow\,\,\,$
\begin{ytableau}
\scriptstyle a_{1,1} & \scriptstyle a_{1,2} & \scriptstyle a_{1,3} \\
\none[\vdots]  \\
\scriptstyle a_{l,1} & \scriptstyle a_{l,2} & \scriptstyle a_{l,3} \\
\scriptstyle a_{l+1,1} & \scriptstyle a_{l+1,3}\\
\none[\cdots]& \scriptstyle a_{l+2,2}\\
\none[\vdots]  \\
\scriptstyle a_{l+1,2} \\
\scriptstyle a_{s,1} \\
\none[\vdots]  \\
\end{ytableau}
\qquad
\caption{The injective map $g$ for case $2(b)$ (left) and case $2(c)$ (right).}
\label{g2}  
\end{figure}

Now we let
\[A=\Big\{\{T^+\in\PT|\sh(T^+)=3^l2^11^{n-3l-2}\}-\im(f)\Big\}\]
and
\[B=\Big\{\{T^-\in\PT|\sh(T^-)=3^l2^21^{n-3l-4}\}-\im(g)\Big\}\]

We can finish the proof of Theorem \ref{e_{(n-2l-1,l+1,l)}} by finding an injection from $B$ to $A$. We start by characterizing the sets $A$ and $B$ more precisely. Using Figure \ref{Set A and B} (left), $A$ is the set of $P$-tableaux of shape $3^l2^11^{n-3l-2}$, whose entries satisfy the following conditions: For any $r\in [l+2,n-2l-1]$, there does not exist a vertical sequence of consecutive entries $a_{l+1,1},\cdots,a_{{r-1},1}$ in the first column such that $a_{l+1,1},\cdots,a_{{r-1},1}\prec a_{r,1}\prec a_{l+1,2}$. Using Figure \ref{Set A and B} (right), $B$ is the set of $P$-tableaux of shape $3^l2^21^{n-3l-4}$, whose entries satisfy all of the followings:
\begin{enumerate}
    \item (The complement of case 1) If $a_{l+1,2}\prec a_{l+2,2}$, then for any $r\in [l+3,n-2l-2]$, there does not exist a vertical sequence of consecutive entries $\{a_{l+2,1},\cdots,a_{{r-1},1}\}$ in the first column such that $a_{l+2,1},\cdots,a_{{r-1},1}\prec a_{r,1}\prec a_{l+2,2}$. 
    \item (The complement of case 2a) If $a_{l+1,2}\nprec a_{l+2,2}$, then for any $r\in [l+3,n-2l-2]$, there does not exist a vertical sequence of consecutive entries $\{a_{l+3,1},\cdots,a_{{r-1},1}\}$ in the first column such that $a_{l+3,1},\cdots,a_{{r-1},1}\prec a_{r,1}\prec a_{l+1,2}, a_{r,1}<a_{l+2,2}$ and $a_{l+2,1}\nprec a_{r,1}$. 
    \item (The complement of case 2b) If $a_{l+1,2}\nprec a_{l+2,2}$, then for any $r\in [l+3,n-2l-2]$, there does not exist a vertical sequence of consecutive entries $\{a_{l+2,1},\cdots,a_{{r-1},1}\}$ in the first column such that $a_{l+2,1},\cdots,a_{{r-1},1}\prec a_{r,1}\prec a_{l+2,2}$ and $a_{r,1}\nprec a_{l+1,2}$. 
    \item (The complement of case 2c) If $a_{l+1,2}\nprec a_{l+2,2}$, then for any $r\in [l+3,n-2l-2]$, there does not exist a vertical sequence of consecutive entries $\{a_{l+1,1},\cdots,a_{{r-1},1}\}$ in the first column such that $a_{l+1,1},\cdots,a_{{r-1},1}\prec a_{r,1}\prec a_{l+2,2}$ and $a_{r,1}\prec a_{l+1,2}$. Further, the conditions that $a_{l+1,1}\prec a_{l+2,1}\prec a_{l+2,2}$, $a_{l+2,1}\prec a_{l+1,2}$ and $a_{l+3,1}\prec a_{l+2,2}$ cannot be satisfied at the same time.
\end{enumerate}

\begin{figure}[h]
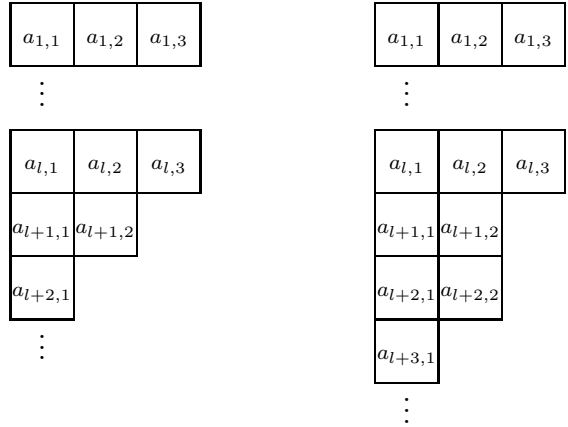

   \centering
   \ytableausetup{mathmode, boxsize=2em}
\begin{ytableau}
\scriptstyle a_{1,1} & \scriptstyle a_{1,2} & \scriptstyle a_{1,3} \\
\none[\vdots]  \\
\scriptstyle a_{l,1} & \scriptstyle a_{l,2} & \scriptstyle a_{l,3} \\
\scriptstyle a_{l+1,1} & \scriptstyle a_{l+1,2} \\
\scriptstyle a_{l+2,1} \\
\none[\vdots]  \\
\end{ytableau}\,\,\,
\hspace{1cm}
\qquad
\begin{ytableau}
\scriptstyle a_{1,1} & \scriptstyle a_{1,2} & \scriptstyle a_{1,3} \\
\none[\vdots]  \\
\scriptstyle a_{l,1} & \scriptstyle a_{l,2} & \scriptstyle a_{l,3} \\
\scriptstyle a_{l+1,1} &\scriptstyle a_{l+1,2} \\
\scriptstyle a_{l+2,1} &\scriptstyle a_{l+2,2} \\
\scriptstyle a_{l+3,1} \\
\none[\vdots]  \\
\end{ytableau}
\qquad
\caption{Examples of $P$-tableaux in set $A$ (left) and set $B$ (right).}
\label{Set A and B}  
\end{figure}

Now we define an injective map $\phi: B\mapsto A$ using the characterization of set $B$ above. Let $T_b\in B$, $\phi(T_b)$ is illustrated in Figure \ref{phi1} (left). In particular, find the smallest $s\in [l+3, n-2l-2]$ such that $a_{s,1}\nprec a_{l+2,2}$ and insert $a_{l+2,2}$ above $a_{s,1}$. If there does not exist such $a_{s,1}$, then we simply move $a_{l+2,2}$ to the bottom of the first column. We will carefully check each case of the characterization of $B$ to ensure that $\phi$ is injective and $\phi(T_b)\in A$.

If $a_{l+1,2}\prec a_{l+2,2}$, then by the characterization of $B$ (the complement of case 1), for any $r\in [l+3,n-2l-2]$, there does not exist a vertical sequence of consecutive entries $\{a_{l+2,1},\cdots,a_{{r-1},1}\}$ in the first column such that $a_{l+2,1},\cdots,a_{{r-1},1}\prec a_{r,1}\prec a_{l+2,2}$, we have $\phi$ to be injective under this case since different $T_b$ will have distinct images $\phi(T_b)$. Moreover, since $a_{l+1,1}\prec a_{l+1,2}\prec a_{l+2,2}$, then $a_{l+1,2}\in S_2$, thus there cannot be an $r\in [l+2,n-2l-1]$ such that $a_{l+1,1},\cdots,a_{{r-1},1}\prec a_{r,1}\prec a_{l+1,2}$. Equivalently, $\phi(T_b)\in A$.

If $a_{l+1,2}\nprec a_{l+2,2}$, when there exists some $r\in [l+3,n-2l-2]$ such that $a_{l+2,1},\cdots,a_{{r-1},1}\prec a_{r,1}\prec a_{l+2,2}$, then by then characterization of $B$, we must have $a_{r,1}\prec a_{l+1,2}$ (the complement of case 2b), thus the inverse map of $\phi$ is well defined since $a_{r,1}$ cannot be pulled back below $a_{l+1,2}$. Also it is enforced that $a_{l+1,1}\nprec a_{r,1}$ (the complement of case 2c), which ensures $\phi(T_b)\in A$. When there does not exist an $r\in [l+3,n-2l-2]$, such that $a_{l+2,1},\cdots,a_{{r-1},1}\prec a_{r,1}\prec a_{l+2,2}$, the argument remains the same using map $\phi$ in Figure \ref{phi1} (left) such that $\phi$ is injective. To see that $\phi(T_b)\in A$, note that $a_{l+2,2}$ is right below $a_{l+1,2}$, then there cannot be a sequence such that $a_{l+1,1}, a_{l+2,1}, \cdots, a_{s-1,1}\prec a_{l+2,2}\prec a_{l+1,2}$. 

Notice that we are left with a special case stated in the last part of set $B$ (the complement of case 2c), that is, $a_{l+1,1}\prec a_{l+2,1}\prec a_{l+2,2}$, $a_{l+2,1}\prec a_{l+1,2}$ and $a_{l+3,1}\nprec a_{l+2,2}$. In this case, $\phi$ in Figure \ref{phi1} (left) will give us an image in $\im(f)$. Thus we use an adjusted map $\phi'$ described in Figure \ref{phi1} (right). One can easily verify that the images of $\phi$ and $\phi'$ are disjoint since the first entry in row $l+2$ belongs to $S_3$.

\begin{figure}[h]
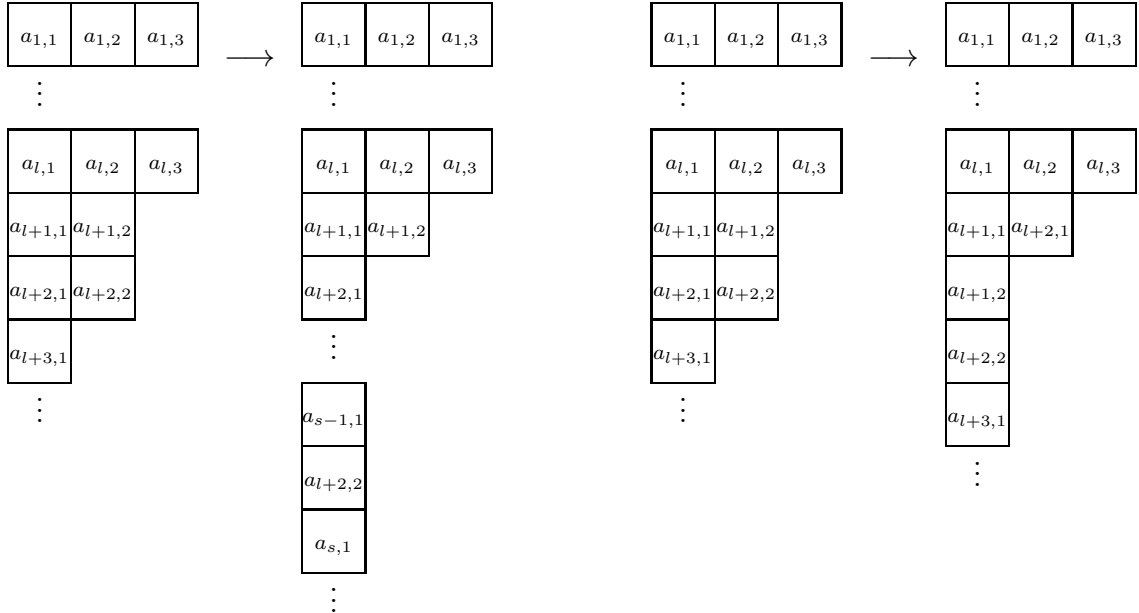

   \centering
   \ytableausetup{mathmode, boxsize=2em}
\begin{ytableau}
\scriptstyle a_{1,1} & \scriptstyle a_{1,2} & \scriptstyle a_{1,3} \\
\none[\vdots]  \\
\scriptstyle a_{l,1} & \scriptstyle a_{l,2} & \scriptstyle a_{l,3} \\
\scriptstyle a_{l+1,1} & \scriptstyle a_{l+1,2} \\
\scriptstyle a_{l+2,1} & \scriptstyle a_{l+2,2}\\
\scriptstyle a_{l+3,1} \\
\none[\vdots]  \\
\end{ytableau}\,\,\,
$\longrightarrow\,\,\,$
\begin{ytableau}
\scriptstyle a_{1,1} & \scriptstyle a_{1,2} & \scriptstyle a_{1,3} \\
\none[\vdots]  \\
\scriptstyle a_{l,1} & \scriptstyle a_{l,2} & \scriptstyle a_{l,3} \\
\scriptstyle a_{l+1,1} &\scriptstyle a_{l+1,2} \\
\scriptstyle a_{l+2,1} \\
\none[\vdots]  \\
\scriptstyle a_{s-1,1} \\
\scriptstyle a_{l+2,2} \\
\scriptstyle a_{s,1} \\
\none[\vdots]  \\
\end{ytableau}
\qquad
\hspace{10mm}
\begin{ytableau}
\scriptstyle a_{1,1} & \scriptstyle a_{1,2} & \scriptstyle a_{1,3} \\
\none[\vdots]  \\
\scriptstyle a_{l,1} & \scriptstyle a_{l,2} & \scriptstyle a_{l,3} \\
\scriptstyle a_{l+1,1} & \scriptstyle a_{l+1,2} \\
\scriptstyle a_{l+2,1} & \scriptstyle a_{l+2,2}\\
\scriptstyle a_{l+3,1} \\
\none[\vdots]  \\
\end{ytableau}\,\,\,
$\longrightarrow\,\,\,$
\begin{ytableau}
\scriptstyle a_{1,1} & \scriptstyle a_{1,2} & \scriptstyle a_{1,3} \\
\none[\vdots]  \\
\scriptstyle a_{l,1} & \scriptstyle a_{l,2} & \scriptstyle a_{l,3} \\
\scriptstyle a_{l+1,1} &\scriptstyle a_{l+2,1} \\
\scriptstyle a_{l+1,2} \\
\scriptstyle a_{l+2,2} \\
\scriptstyle a_{l+3,1} \\
\none[\vdots]  \\
\end{ytableau}
\qquad
\caption{The injective map $\phi$ (left) and the adjusted map $\phi'$ (right).}
\label{phi1}  
\end{figure}
\end{proof}

\section{$e$-positivity for a class of chromatic symmetric functions for Dyck paths of bounce number three}\label{section4}

Using Equation \ref{expansion}, we show that a class of chromatic symmetric functions for Dyck paths of bounce number three (and their transposes) are $e$-positive.

\begin{theorem} \label{epos1}
Let $P(\bf{d})$ be a natural unit interval order on $[n]$, for which the associated Dyck path $\mathbf{d}=(d_1, d_2, n-1,\cdots,n-1,n,\cdots,n)$, $|\bf{m}|=3$ and $G=\inc(P)$. Then the chromatic symmetric function $X_G(\textbf{x},q)$ is $e$-positive.
\end{theorem}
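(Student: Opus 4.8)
The plan is to start from the $e$-expansion in Equation \eqref{expansion} and exploit a structural feature special to this family: for such $\mathbf{d}$ one always has $\min\{a,b,c\}=1$. Indeed $m_1=d_{d_1+1}$, so if $d_1=1$ then $c=|S_1|=1$, whereas if $d_1\geq 2$ then $d_1+1\geq 3$ forces $d_{d_1+1}\in\{n-1,n\}$, and since $|\mathbf{m}|=3$ requires $m_1<n$ we get $m_1=n-1$ and hence $a=|S_3|=n-m_1=1$. In either case $\min\{a,b,c\}=1$, so by Lemma \ref{P exist} (equivalently Remark \ref{rmk2}) the index $l$ ranges only over $\{0,1\}$ and every $B_{3^{l}\cdots}$ with $l\geq 2$ vanishes. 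Substituting $l\in\{0,1\}$ into Equation \eqref{expansion} and discarding all terms containing $B_{3^{2}\cdots}$ or $B_{3^{3}\cdots}$ collapses the sum dramatically.

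After this reduction the coefficients of $e_{(n-2l,l,l)}$ and $e_{(n-2l-1,l+1,l)}$ are already in $\mathbb{N}[q]$ by Theorems \ref{e_{(n-2l,l,l)}} and \ref{e_{(n-2l-1,l+1,l)}}, so it remains to treat $e_{(n-2l-j,l+j,l)}$ for $j\geq 2$. For $l=1$ the coefficient reduces to $B_{3\cdot 2^{j}1^{n-3-2j}}-B_{3\cdot 2^{j+1}1^{n-5-2j}}$, and for $l=0$ it reduces to $B_{2^{j}1^{n-2j}}-B_{2^{j+1}1^{n-2j-2}}+B_{3\cdot 2^{j}1^{n-2j-3}}-B_{3\cdot 2^{j-2}1^{n-2j+1}}$. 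For the $l=1$ term I would construct a single inversion-preserving injection from $P$-tableaux of shape $3\cdot 2^{j+1}1^{n-5-2j}$ into those of shape $3\cdot 2^{j}1^{n-3-2j}$ by locating the appropriate length-$2$ row and pushing its second entry down into the first column (the analogue of $\alpha$ and $f$), with Lemma \ref{lm1} guaranteeing both the uniqueness of the insertion site and preservation of $\iv$.

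For the $l=0$ term I would follow the architecture of the proof of Theorem \ref{e_{(n-2l-1,l+1,l)}}: build an injection $F$ from shape $2^{j+1}$ into $2^{j}$ and an injection $G$ from shape $3\cdot 2^{j}$ into $3\cdot 2^{j-2}$ (each pushing second-column entries down into the first column), set $A=\{T:\sh(T)=2^{j}\}\setminus\im(F)$ and $B=\{T:\sh(T)=3\cdot 2^{j-2}\}\setminus\im(G)$, and then exhibit an inversion-preserving injection $\Phi\colon B\to A$. The same bookkeeping as in Theorem \ref{e_{(n-2l-1,l+1,l)}} (namely $\lvert 2^{j}\rvert-\lvert 2^{j+1}\rvert+\lvert 3\cdot 2^{j}\rvert-\lvert 3\cdot 2^{j-2}\rvert=\lvert A\rvert-\lvert B\rvert\geq 0$ coefficientwise) then gives nonnegativity of the full coefficient.

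The hard part will be $\Phi$. A tableau in $B$ carries the unique length-$3$ chain $x\prec y\prec z$ (the single $l=1$ row guaranteed by $\min\{a,b,c\}=1$), while the target shape $2^{j}$ has no length-$3$ row; hence $\Phi$ must simultaneously dismantle that $3$-chain into a length-$2$ row and manufacture one further length-$2$ row out of the first column, all while keeping $\iv$ constant. As with the maps $\phi$ and $\phi'$ in the case $j=1$, this should split into several subcases governed by the comparabilities among $x,y,z$ and the topmost first-column entries, with a handful of boundary configurations needing an adjusted map so that the images of the subcases stay disjoint; verifying injectivity and inversion-preservation across these subcases is where the bulk of the work lies. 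Finally, since within this family the two regimes $c=1$ and $a=1$ are interchanged by transposing the board (reflecting $\tau$ across the diagonal), an operation under which $e$-positivity is preserved, it suffices to carry out the construction in one regime and the transposed paths are covered as well.
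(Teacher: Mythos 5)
Your skeleton matches the paper's own proof: restrict the range of $l$, invoke Theorems \ref{e_{(n-2l,l,l)}} and \ref{e_{(n-2l-1,l+1,l)}} for two of the coefficient families, and clear the remaining coefficients with inversion-preserving injections arranged exactly as in the proof of Theorem \ref{e_{(n-2l-1,l+1,l)}}. But there are genuine gaps. First, your reduction $\min\{a,b,c\}=1$ is correct yet far weaker than what this family actually satisfies, and the difference is decisive. The paper's Lemma \ref{lm4.2}(c) shows that for these posets a $P$-tableau containing a $3$-chain can contain at most one further $2$-chain, and a $P$-tableau with no $3$-chain at most three $2$-chains; hence $B_{3^12^j1^{n-3-2j}}=0$ for $j\geq 2$ and $B_{2^j1^{n-2j}}=0$ for $j\geq 4$. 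With this, all of your $l=1$, $j\geq 2$ coefficients are identically zero, and the only nontrivial leftovers are $[e_{(n-2,2)}]$ and $[e_{(n-3,3)}]$. Without it, you are proposing to construct inversion-preserving injections $3\cdot 2^{j+1}\to 3\cdot 2^{j}$ and to run the four-term $F,G,\Phi$ scheme for \emph{every} $j\geq 2$ --- that is, the general "move cells among multiple columns" problem that the paper explicitly identifies as open in Section \ref{open1}. Your proposal gives no reason such maps exist in general; they only "exist" here because the relevant sets are empty, which your argument never establishes.

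Second, even in the cases that genuinely survive, all of the mathematical content lies in the maps you defer. At $j=3$ your $\Phi$ must inject every shape-$3^12^11^{n-5}$ tableau into the shape-$2^31^{n-6}$ tableaux, and at $j=2$ every shape-$3^11^{n-3}$ tableau into the shape-$2^21^{n-4}$ tableaux while avoiding $\im(F)$; these are precisely the paper's Lemmas \ref{lemman-3} and \ref{lemman-2}, whose proofs occupy most of Section \ref{section4} and lean heavily on the fine structure recorded in Lemma \ref{lm4.2}(a),(b) (e.g. if $a_{1,3}=n$ then $a_{1,1},a_{2,1}\in\{1,2\}$, and if $a_{1,3}\neq n$ then $S_1=\{1\}$) to define roughly a dozen case-by-case swaps and to check that inversions balance. "Push one second-column entry down into the first column" cannot by itself dismantle a $3$-row into $2$-rows; the swaps are exactly what must be exhibited, so the sketch cannot be accepted as a proof. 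Finally, your closing reduction by transposition is false: for $\tau=(n-d_1,\,n-d_2,\,1^r)$ the conjugate is $(r+2,\,2^{n-d_2-1},\,1^{d_2-d_1})$, which for $d_2<n-2$ lies in the family of Corollary \ref{Corepos}, not in the family of this theorem. So the $a=1$ regime cannot be obtained from the $c=1$ regime by reflecting the board; both regimes must be handled directly, as the paper does.
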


Figure \ref{exthm1} illustrates two examples of the unit interval orders descried in Theorem \ref{epos1}. Throughout this section, we assume that the natural unit interval order $P(\bf{d})$ is as described in Theorem \ref{epos1}. Lemma \ref{lm4.2} below explains why we display these two examples and they are essential for classifying the sets $S_1,S_2,S_3$ and conducting our cases discussion in later proof. 

\begin{figure}[h]
    \centering
\begin{tikzpicture}[every node/.style={minimum size=.3cm-\pgflinewidth, outer sep=0pt}]
\draw[step=0.3cm] (-1.5001,-1.5001) grid (1.5,1.5);
\draw (-1.5,-1.5) -- (1.5,1.5);
\draw [red,very thick] (-1.5,-1.5)--(-1.5,-.3)--(-1.2,-.3)--(-1.2,0)--(-.9,0)--(-.9,1.2)--(.6,1.2)--(.6,1.5)--(1.5,1.5);
\draw [green,ultra thick, dashed] (-1.5,-1.5)--(-1.5,-.3)--(-.3,-.3)--(-.3,1.2)--(1.2,1.2)--(1.2,1.5)--(1.5,1.5);
    \node[fill=yellow!30] at (-1.35,1.35) {};
    \node[fill=yellow!30] at (-1.05,1.35) {};
    \node[fill=yellow!30] at (-.75,1.35) {};
    \node[fill=yellow!30] at (-0.45,1.35) {};
    \node[fill=yellow!30] at (-0.15,1.35) {};
    \node[fill=yellow!30] at (0.15,1.35) {};
    \node[fill=yellow!30] at (0.45,1.35) {};
    \node[fill=yellow!30] at (-1.35,1.05) {};
    \node[fill=yellow!30] at (-1.05,1.05) {};
    \node[fill=yellow!30] at (-1.35,.75) {};
    \node[fill=yellow!30] at (-1.05,.75) {};
    \node[fill=yellow!30] at (-1.35,.45) {};
    \node[fill=yellow!30] at (-1.05,.45) {};
    \node[fill=yellow!30] at (-1.35,.15) {};
    \node[fill=yellow!30] at (-1.05,.15) {};
    \node[fill=yellow!30] at (-1.35,-.15) {};
\end{tikzpicture}
\qquad
\hspace{2cm}
\begin{tikzpicture}[every node/.style={minimum size=.3cm-\pgflinewidth, outer sep=0pt}]
\draw[step=0.3cm] (-1.5001,-1.5001) grid (1.5,1.5);
\draw (-1.5,-1.5) -- (1.5,1.5);
\draw [red,very thick] (-1.5,-1.5)--(-1.5,-1.2)--(-1.2,-1.2)--(-1.2,-.3)--(-.9,-.3)--(-.9,1.2)--(-.3,1.2)--(-.3,1.5)--(1.5,1.5);
\draw [green, ultra thick, dashed] (-1.5,-1.5)--(-1.5,-1.2)--(-1.2,-1.2)--(-1.2,-.3)--(-.3,-.3)--(-.3,1.5)--(1.5,1.5);
    \node[fill=yellow!30] at (-1.35,1.35) {};
    \node[fill=yellow!30] at (-1.05,1.35) {};
    \node[fill=yellow!30] at (-.75,1.35) {};
    \node[fill=yellow!30] at (-0.45,1.35) {};
    \node[fill=yellow!30] at (-1.35,1.05) {};
    \node[fill=yellow!30] at (-1.05,1.05) {};
    \node[fill=yellow!30] at (-1.35,.75) {};
    \node[fill=yellow!30] at (-1.05,.75) {};
    \node[fill=yellow!30] at (-1.35,.45) {};
    \node[fill=yellow!30] at (-1.35,.15) {};
    \node[fill=yellow!30] at (-1.35,-.15) {};
    \node[fill=yellow!30] at (-1.05,.45) {};
    \node[fill=yellow!30] at (-1.05,.15) {};
    \node[fill=yellow!30] at (-1.05,-.15) {};
    \node[fill=yellow!30] at (-1.35,-.45) {};
    \node[fill=yellow!30] at (-1.35,-.75) {};
    \node[fill=yellow!30] at (-1.35,-1.05) {};
\qquad
\end{tikzpicture}
\caption{Examples of the class of natural unit interval orders in Theorem \ref{epos1}.}
\label{exthm1}
\end{figure}
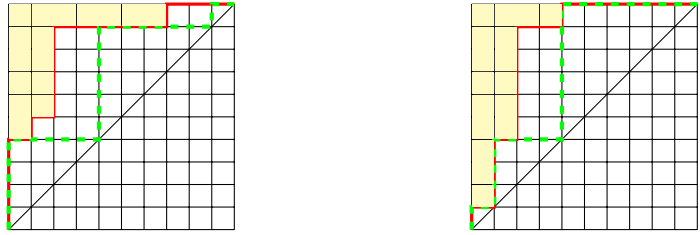

\begin{lemma} \label{lm4.2}
Following the context of Theorem \ref{epos1} and the notations in Figure \ref{b(d)=3}, we obtain three properties of $P(\bf{d})$ and its corresponding $P$-tableaux.

$(a)$ If $\mathbf{m}=(m_0,m_1,n)$ and $S_3\neq \{n\}$, then $m_0=1$.

$(b)$ For any $P$-tableau $T$ with $\sh(T)=3^12^11^{n-5}$, let $a_{i,j}$ be the entry of $T$ at the $i$th row and $j$th column. If $a_{1,3}=n$, then $a_{1,1},a_{2,1}\in \{1,2\}$. If $a_{1,3}\neq n$, then $S_1=\{1\}$, $a_{1,1}=1, a_{1,2}=2$ and $a_{2,2}=n$.

$(c)$ A valid $P$-tableau with longest chain of length three can have at most one 3-chain and one 2-chain. A valid $P$-tableau with longest chain of length two can have at most three 2-chains.
\end{lemma}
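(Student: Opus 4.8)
My plan is to read everything off the rigid shape of the poset $P(\mathbf{d})$ forced by the special form $\mathbf{d}=(d_1,d_2,n-1,\dots,n-1,n,\dots,n)$. The decisive observation is that for every index $i\ge 3$ we have $d_i\in\{n-1,n\}$, so such an $i$ satisfies $i\prec j$ for at most $j=n$; that is, $i$ is either maximal or covered solely by $n$. By contrast $1\prec j$ for all $j>d_1=m_0$, so $1$ sits below all of $S_2\cup S_3$, and $2\prec j$ exactly for $j>d_2$. Thus $1$ and $2$ are the only elements able to bridge distinct blocks through edges other than the ``$\cdot\prec n$'' edges. I would first prove (a) by contraposition: if $m_0\ge 2$ then $m_0+1\ge 3$, so $m_1=d_{m_0+1}\in\{n-1,n\}$; since $|\mathbf{m}|=3$ forces $m_1<n$ we get $m_1=n-1$ and hence $S_3=\{n\}$. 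Equivalently $S_3\neq\{n\}\Rightarrow m_0=1$. The key corollary I would extract and reuse is the dichotomy $\min(|S_1|,|S_3|)=1$: either $m_0=1$ and $S_1=\{1\}$, or $m_0\ge2$ and $S_3=\{n\}$. I would also note that bounce number $3$ forces $d_{m_1+1}=n$, so the whole ``$n-1$ block'' of $\mathbf{d}$ sits among indices $\le m_1$; consequently, in the case $m_0=1$ every element of $S_3$ is maximal, so a $3$-chain necessarily reads $S_1\prec S_2\prec S_3$ (using also that each $S_i$ is an antichain).

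For (c) I would use this dichotomy. Any $3$-chain has its bottom in $S_1$ and its top in $S_3$, and since one of these two blocks is a singleton and distinct chains need distinct entries, there is at most one $3$-chain. To bound $2$-chains I would count block-crossing pairs, organised by case. When $m_0\ge 2$ the only $S_3$-element $n$ is the top of at most one chain, while every other $2$-chain is of type $S_1\prec S_2$ with bottom in $\{1,2\}$; with a $3$-chain present it already consumes $n$ and one of $\{1,2\}$, leaving at most one $2$-chain, and without a $3$-chain the count is at most $1+2=3$. When $m_0=1$ the symmetric roles hold: $S_1=\{1\}$ is the unique source, the only $S_2$-element below $S_3\setminus\{n\}$ is $2$ and all higher $S_2$-elements can only be paired with $n$, so there are at most two $S_2\prec S_3$ links; a $3$-chain consumes $1$ and one such link, leaving at most one $2$-chain, while without a $3$-chain the bottom-$1$ chain plus the two $S_2\prec S_3$ links give at most three.

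For (b), with $\sh(T)=3^12^11^{n-5}$ the single $3$-chain is $a_{1,1}\prec a_{1,2}\prec a_{1,3}$ with $a_{1,1}\in S_1$, $a_{1,2}\in S_2$, $a_{1,3}\in S_3$; since $a_{1,1}$ lies below an $S_2$-element it is forced into $\{1,2\}$. If $a_{1,3}=n$, I would show the $2$-chain bottom $a_{2,1}$ also lies in $\{1,2\}$ by the dichotomy: for $m_0\ge 2$ it is an $S_1$-element below $S_2$, and for $m_0=1$ the consumed element $1$ means the surviving $2$-chain is $S_2\prec S_3\setminus\{n\}$, whose only possible bottom is $2$. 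If $a_{1,3}\neq n$, then $S_3\neq\{n\}$ forces $m_0=1$ and $S_1=\{1\}$ by (a), so $a_{1,1}=1$; because $2$ is the unique $S_2$-element below $S_3\setminus\{n\}$, the middle entry is $a_{1,2}=2$, and then the remaining $2$-chain bottom $a_{2,1}\in\{3,\dots,m_1\}$ is covered only by $n$, forcing $a_{2,2}=n$.

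The main obstacle is the $2$-chain count in (c): one must argue rigorously that the links into the single sink $n$ and out of the singleton source cannot be multiplied, and that the high-index elements of $S_2$ (resp.\ $S_1$) are genuinely confined to pairing with $n$, so that the crossing edges are capped at three (two once a $3$-chain is present). Keeping the cases $m_0=1$ and $m_0\ge 2$ in parallel, and discarding the degenerate shapes for which no valid $P$-tableau exists (making the corresponding clauses vacuous), are the remaining bookkeeping points; all of the combinatorial content is controlled by the dichotomy from (a) together with the observation that only $1$ and $2$ can bridge blocks off the vertex $n$.
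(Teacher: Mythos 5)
Your proposal is correct and takes essentially the same approach as the paper: both rest on the structural fact that the special form of $\mathbf{d}$ forces every element $i\ge 3$ to lie below at most the single element $n$ (equivalently, every row of $\tau$ after the first has at most two cells), yielding the dichotomy $S_1=\{1\}$ or $S_3=\{n\}$ and a case analysis of poset relations. The paper states these facts tersely with reference to its figures, while you spell out the chain-counting details it leaves implicit.
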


\begin{proof}
(a) Let $\tau=(\tau_1, \cdots,\tau_{a+b})$ be the Young diagram determined by $\bf{d}$. Then we have $\tau_i\leq 2$ for $i\geq 2$. If $\mathbf{m}=(m_0,m_1,n)$ and $S_3\neq \{n\}$, then the bounce path must touch the diagonal for the first time at $m_0=1$. Consequently, $S_1=\{1\}$ and $2\prec \{S_3\}$. This case is shown in Figure \ref{exthm1} (right).

(b) This follows immediately from a poset relations analysis on the cases shown in Figure \ref{exthm1}.

(c) Since each part of $\tau$ has length $\leq 2$ except the first row, this result is also straightforward from the poset relations. 
\end{proof}

Using Lemma \ref{lm4.2}, the possible shapes of $P$-tableaux are limited, and we can reduce Equation \eqref{expansion} to the following expansion.

\begingroup
\allowdisplaybreaks
\begin{align*} 
\numberthis \label{expansionepos1}
X_G(\textbf{x},q)=&\sum_{l=0,1}\Bigg[\big(B_{3^l1^{n-3l}}-B_{3^l2^11^{n-3l-2}}+B_{3^{l+1}1^{n-3l-3}}\big)e_{(n-2l,l,l)} \\
&+\big(B_{3^l2^11^{n-3l-2}}-B_{3^l2^21^{n-3l-4}}+B_{3^{l+1}2^11^{n-3l-5}}-B_{3^{l+1}1^{n-3l-3}}\big)e_{(n-2l-1,l+1,l)}\Bigg] \\
&+\big(B_{2^21^{n-4}}-B_{2^31^{n-6}}-B_{3^11^{n-3}}\big)e_{(n-2,2)} \\
&+\big(B_{2^31^{n-6}}-B_{3^12^11^{n-5}}\big)e_{(n-3,3)}
\end{align*}
\endgroup

We prove the following lemmas using sign reversing involutions to show that $X_G(\textbf{x},q)$ is $e$-positive. Lemma \ref{lm4.2} is very useful to help us obtain the next result.

\begin{lemma} \label{lemman-3}
In Equation \ref{expansionepos1}, $[e_{(n-3,3)}]X_G(\mathbf{x},q)$ is in $\mathbb{N}[q]$.
\end{lemma}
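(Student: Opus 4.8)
The plan is to read off from Equation \eqref{expansionepos1} that
\[
[e_{(n-3,3)}]X_G(\mathbf{x},q)=B_{2^31^{n-6}}-B_{3^12^11^{n-5}},
\]
and then, following the sign-reversing philosophy already used for the maps $\alpha$, $f$, $g$ in the proofs of Theorems \ref{e_{(n-2l,l,l)}} and \ref{e_{(n-2l-1,l+1,l)}}, to exhibit an inversion-preserving injection
\[
\psi:\{T^-\in\PT(G)\mid \sh(T^-)=3^12^11^{n-5}\}\longrightarrow\{T^+\in\PT(G)\mid \sh(T^+)=2^31^{n-6}\}.
\]
Since $B_\mu(q)=\sum_{T}q^{\iv(T)}$, any such $\psi$ makes $B_{3^12^11^{n-5}}$ coefficientwise dominated by $B_{2^31^{n-6}}$, so the difference lies in $\mathbb{N}[q]$, which is exactly the claim.

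First I would record the geometry: the two shapes have the same number $n-3$ of rows and the same first-column length, and $2^31^{n-6}$ is obtained from $3^12^11^{n-5}$ by deleting the cell $(1,3)$ and adjoining the cell $(3,2)$. By Lemma \ref{lm4.2}(c) a shape-$3^12^11^{n-5}$ tableau carries exactly one $3$-chain (row $1$) and one $2$-chain (row $2$), while a shape-$2^31^{n-6}$ tableau must carry three $2$-chains (rows $1,2,3$). Thus $\psi$ has to reorganize the three non-first-column entries of $T^-$, namely $a_{1,2}\in S_2$, $a_{1,3}\in S_3$ (the middle and top of the $3$-chain, using that $S_1,S_2,S_3$ are antichains so the $3$-chain meets each block once) and the $2$-chain top $a_{2,2}$, into the three $2$-chain tops of $T^+$. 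I would then invoke Lemma \ref{lm4.2}(b) to split into the two rigid cases $a_{1,3}=n$ and $a_{1,3}\neq n$: in the former the first column begins with $\{1,2\}$ in some order, and in the latter $S_1=\{1\}$, $a_{1,1}=1$, $a_{1,2}=2$, $a_{2,2}=n$, so that only a handful of configurations of the top two rows occur.

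In each case $\psi$ slides $a_{1,3}$ out of column $3$ and re-inserts it into the first column above the smallest $a_{s,1}$ with $a_{s,1}\nprec a_{1,3}$ (exactly the insertion used for $f$ and $g$), while promoting $a_{2,2}$, or a suitable transposition of the column-$2$ entries, to build the new second column of length three. The relation $S_1\prec S_3$ of Lemma \ref{lm1} guarantees that the descents $a_{i+1,1}\nprec a_{i,1}$ survive the insertion, so $\psi(T^-)$ is a genuine $P$-tableau. Inversion bookkeeping is handled as for $g$: whenever the descent of $a_{1,3}$ past the first column destroys an inversion, a compensating swap of two incomparable column-$2$ entries—whose relative order is pinned down by the chain relations, e.g.\ $a_{1,2}\prec a_{1,3}$ together with $a_{2,2}\nprec a_{1,3}$ forces $a_{1,2}<a_{2,2}$—recreates exactly one inversion, leaving $\iv$ unchanged. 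Injectivity follows by recovering the case and the insertion index $s$ from $\psi(T^-)$, the images of the separate cases being disjoint because one can locate the unique relocated $S_3$-entry and read off which structural case produced it.

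The main obstacle I anticipate is the inversion count rather than the validity of $\psi(T^-)$: the naive relocation of $a_{1,3}$ will gain or lose a single inversion depending on the mutual comparabilities among $\{a_{1,2},a_{1,3},a_{2,2}\}$ and the displaced first-column entries, so the delicate part is to enumerate these comparability patterns (in the spirit of Cases $1$, $2a$, $2b$, $2c$ of map $g$) and attach to each the correct compensating swap so that $\iv$ is preserved on the nose. A secondary subtlety is to confirm that the images of the various sub-cases are pairwise disjoint so that the assembled $\psi$ is globally injective; here the rigidity supplied by Lemma \ref{lm4.2}(b) is exactly what keeps this finite bookkeeping tractable.
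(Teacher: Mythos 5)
Your high-level frame matches the paper exactly: read off $[e_{(n-3,3)}]X_G(\mathbf{x},q)=B_{2^31^{n-6}}-B_{3^12^11^{n-5}}$ from Equation \eqref{expansionepos1}, build an inversion-preserving injection $\psi$ from shape-$3^12^11^{n-5}$ tableaux to shape-$2^31^{n-6}$ tableaux, and use Lemma \ref{lm4.2} to rigidify the cases $a_{1,3}=n$ versus $a_{1,3}\neq n$. However, the core mechanism you propose has a genuine gap: you cannot ``slide $a_{1,3}$ out of column 3 and re-insert it into the first column above the smallest $a_{s,1}$ with $a_{s,1}\nprec a_{1,3}$'' as in the maps $f$ and $g$, because here, unlike in Theorems \ref{e_{(n-2l,l,l)}} and \ref{e_{(n-2l-1,l+1,l)}}, the source and target shapes have first columns of \emph{equal} length $n-3$ --- a fact you yourself record when you observe that $2^31^{n-6}$ is obtained from $3^12^11^{n-5}$ by deleting $(1,3)$ and adjoining $(3,2)$. (The insertion device works for $f$ and $g$ precisely because there the target shape has one more row than the source.) Inserting $a_{1,3}$ into column 1 produces a column of length $n-2$, and your compensating step, ``promoting $a_{2,2}$, or a suitable transposition of the column-$2$ entries,'' cannot repair this: $a_{2,2}$ already sits in column 2, so no promotion or transposition of column-2 entries adds the needed third cell to column 2 or removes the extra cell from column 1. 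As written, $\psi$ does not land in tableaux of shape $2^31^{n-6}$ at all, so the questions of injectivity and inversion bookkeeping --- which you correctly identify as delicate --- never even arise.

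The paper's construction avoids long-range insertion entirely: in every case the displaced $S_3$-entry (usually $n$) is placed in the new cell $(3,2)$, and all remaining moves are swaps among the at most seven entries of the top three rows, with rows $4$ and below untouched. For instance, in Case 2a the entries $a_{2,1}$ and $a_{3,1}$ are swapped in column 1 while $a_{2,2}$ descends to $(3,2)$; in Case 2b the column-2 entry $a_{1,2}$ migrates into position $(3,1)$ while $a_{3,1}$ moves into column 2. The inversion count is then a finite check over those entries, pinned down by exactly the relations you cite (e.g.\ $a_{1,2}\prec a_{1,3}$ together with $a_{2,2}\nprec a_{1,3}$ forces $a_{1,2}<a_{2,2}$), but deployed for local swaps rather than a deep insertion. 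To salvage your version you would have to specify which first-column entry migrates to $(3,2)$ after inserting $a_{1,3}$, and verify the row/column conditions and the inversion balance for that migration in each comparability pattern; that case analysis is precisely the content of the paper's proof, so the insertion detour buys nothing.
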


\begin{proof}
Using similar methods in Theorem \ref{e_{(n-2l,l,l)}} and Theorem \ref{e_{(n-2l-1,l+1,l)}}, we will build an injective map $\psi:\{T^-\in\PT|\sh(T^-)=3^12^11^{n-5}\}\mapsto \{T^+\in\PT|\sh(T^+)=2^31^{n-6}\}$. Let $T_1$ be in the domain of $\psi$, and $a_{i,j}$ be the entry at the $i$th row and $j$th column of $T_1$. We elaborate $\psi$ in four cases:
\begin{itemize}
    \item Case 1: $a_{2,2}\prec a_{1,3}$ and $a_{3,1}\prec a_{1,3}$.
      
       Since $a_{3,1}\neq 1,2$ by Lemma \ref{lm4.2}(b), $a_{3,1}\prec a_{1,3}$ provides that $a_{1,3}=n$. For $\psi$, we move $a_{1,3}=n$ as a new entry $a_{3,2}$ to obtain a P-tableau $\psi(T_1)$, leaving all inversions unvaried. See Figure \ref{case1}. \\
      
      \begin{figure}[h]
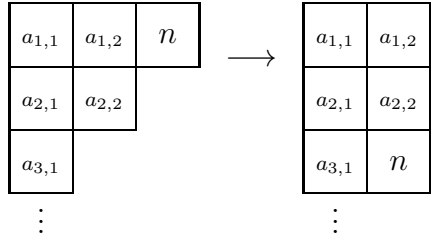

   \centering
   \ytableausetup{mathmode, boxsize=2em}
\begin{ytableau}
\scriptstyle a_{1,1} & \scriptstyle a_{1,2} & n \\
\scriptstyle a_{2,1} & \scriptstyle a_{2,2} \\
\scriptstyle a_{3,1} \\
\none[\vdots]  \\
\end{ytableau}\,\,\,
$\longrightarrow\,\,\,$
\begin{ytableau}
\scriptstyle a_{1,1} & \scriptstyle a_{1,2} \\
\scriptstyle a_{2,1} & \scriptstyle a_{2,2} \\
\scriptstyle a_{3,1} & n\\
\none[\vdots]  \\
\end{ytableau}
\qquad
\caption{The injective map $\psi$: Case 1.}
\label{case1}  
\end{figure}
      
    \item Case 2: $a_{2,2}\nprec a_{1,3}$ and $a_{3,1}\prec a_{1,3}$.
    
    Again, $a_{1,3}=n$, and $a_{1,1}, a_{2,1}\in \{1,2\}$. In addition, $a_{3,1}\prec n$ and $a_{1,2}\prec n$ while $a_{2,2}\nprec n$ implies that $a_{3,1}<a_{2,2}$ and $a_{1,2}<a_{2,2}$. Then we have three sub-cases:

    \begin{itemize}
    \item Case 2a: If $a_{2,1}\nprec a_{3,1}$, then we swap $a_{2,1}$ and $a_{3,1}$, replace $a_{2,2}$ with $n$ and move $a_{2,2}$ as a new entry $a_{3,2}$. In this sub-case, $(a_{3,1},a_{2,1})$ is a new inversion while the original inversion $(a_{2,2},a_{3,1})$ is removed, resulting in an unvaried total number of inversions. See Figure \ref{case2a}. \\
    
          \begin{figure}[h]
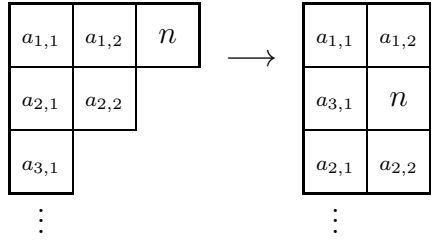

   \centering
   \ytableausetup{mathmode, boxsize=2em}
\begin{ytableau}
\scriptstyle a_{1,1} & \scriptstyle a_{1,2} & n \\
\scriptstyle a_{2,1} & \scriptstyle a_{2,2} \\
\scriptstyle a_{3,1} \\
\none[\vdots]  \\
\end{ytableau}\,\,\,
$\longrightarrow\,\,\,$
\begin{ytableau}
\scriptstyle a_{1,1} & \scriptstyle a_{1,2} \\
\scriptstyle a_{3,1} & n \\
\scriptstyle a_{2,1} & \scriptstyle a_{2,2}\\
\none[\vdots]  \\
\end{ytableau}
\qquad
\caption{The injective map $\psi$: Case 2a.}
\label{case2a}  
\end{figure}

    \item Case 2b: If $a_{2,1}\prec a_{3,1}$ and $a_{2,1}<a_{1,2}$, it implies that $a_{1,2}<a_{3,1}$. Note that $a_{1,2}\nprec a_{3,1}$ by Lemma \ref{lm4.2}(b). Thus we use the map shown in Figure \ref{case2bc} (left). Note that the number of inversions is unchanged since the original inversions $(a_{1,2},a_{2,1})$ and $(n,a_{2,2})$ are replaced with new inversions $(a_{2,2},a_{1,2})$ and $(a_{3,1},a_{1,2})$. \\
    
    \item Case 2c: If $a_{2,1}\prec a_{3,1}$ and $a_{2,1}\prec a_{1,2}$, we simply swap $a_{1,2}$ and $a_{3,1}$ from the image in Case 2b. Again all inversions remain the same except that $(n,a_{2,2})$ are replaced with $(a_{2,2},a_{1,2})$. See Figure \ref{case2bc}(right). \\
    
          \begin{figure}[h]
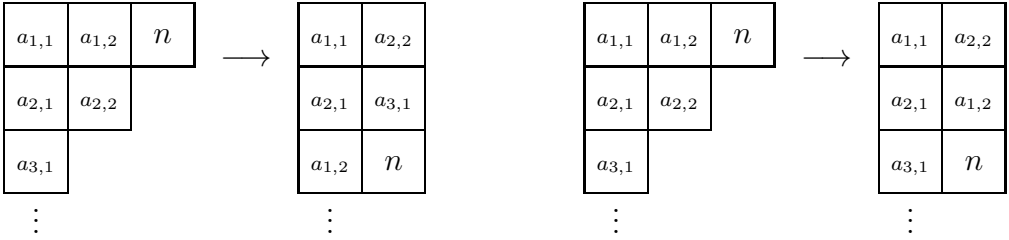

   \centering
   \ytableausetup{mathmode, boxsize=2em}
\begin{ytableau}
\scriptstyle a_{1,1} & \scriptstyle a_{1,2} & n \\
\scriptstyle a_{2,1} & \scriptstyle a_{2,2} \\
\scriptstyle a_{3,1} \\
\none[\vdots]  \\
\end{ytableau}\,\,\,
$\longrightarrow\,\,\,$
\begin{ytableau}
\scriptstyle a_{1,1} & \scriptstyle a_{2,2} \\
\scriptstyle a_{2,1} & \scriptstyle a_{3,1} \\
\scriptstyle a_{1,2} & n\\
\none[\vdots]  \\
\end{ytableau}
\qquad
\hspace{10mm}
\begin{ytableau}
\scriptstyle a_{1,1} & \scriptstyle a_{1,2} & n \\
\scriptstyle a_{2,1} & \scriptstyle a_{2,2} \\
\scriptstyle a_{3,1} \\
\none[\vdots]  \\
\end{ytableau}\,\,\,
$\longrightarrow\,\,\,$
\begin{ytableau}
\scriptstyle a_{1,1} & \scriptstyle a_{2,2} \\
\scriptstyle a_{2,1} & \scriptstyle a_{1,2} \\
\scriptstyle a_{3,1} & n\\
\none[\vdots]  \\
\end{ytableau}
\qquad
\caption{The injective map $\psi$: Case 2b (left) and Case 2c (right).}
\label{case2bc}  
\end{figure}

\end{itemize}
    
    \item Case 3: $a_{2,2}\prec a_{1,3}$ and $a_{3,1}\nprec a_{1,3}$.
    
     $a_{2,2}\prec a_{1,3}$ provides that $a_{1,3}=n$. In addition, $a_{3,1}\nprec n$ implies that $a_{2,2}<a_{3,1}$ and $a_{2,1}\prec a_{3,1}$. We swap $a_{2,2}$ and $a_{3,1}$, and move $n$ as a new entry $a_{3,2}$. Thus $(a_{3,1},a_{2,2})$ contributes to one more inversion, offsetting one disappeared inversion $(n,a_{3,1})$ caused by moving $n$. See Figure \ref{case3}.\\
    
              \begin{figure}[h]
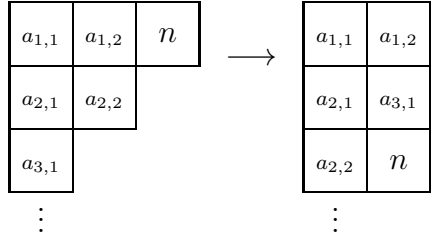

   \centering
   \ytableausetup{mathmode, boxsize=2em}
\begin{ytableau}
\scriptstyle a_{1,1} & \scriptstyle a_{1,2} & n \\
\scriptstyle a_{2,1} & \scriptstyle a_{2,2} \\
\scriptstyle a_{3,1} \\
\none[\vdots]  \\
\end{ytableau}\,\,\,
$\longrightarrow\,\,\,$
\begin{ytableau}
\scriptstyle a_{1,1} & \scriptstyle a_{1,2} \\
\scriptstyle a_{2,1} & \scriptstyle a_{3,1} \\
\scriptstyle a_{2,2} & n\\
\none[\vdots]  \\
\end{ytableau}
\qquad
\caption{The injective map $\psi$: Case 3.}
\label{case3}  
\end{figure}

     \item Case 4: $a_{2,2}\nprec a_{1,3}$ and $a_{3,1}\nprec a_{1,3}$.

     We need to discuss four sub-cases. In Cases 4a-b, we suppose that $a_{2,1}\prec a_{3,1}$, then by Lemma \ref{lm4.2}(b), $a_{1,3}=n$. Also note that since $a_{2,2},a_{3,1}\nprec n$ while $a_{1,2}\prec n$, then we have $a_{1,2}<a_{3,1},a_{2,2}$. In Case 4c-d, we suppose that $a_{2,1}\nprec a_{3,1}$, then either $a_{1,3}=n$ or $a_{2,2}=n$.

     \begin{itemize}
     \item Case 4a: If $a_{2,1}\prec a_{3,1}$ and $a_{2,1}\prec a_{1,2}$. We swap $a_{1,2}$ and $a_{3,1}$, then swap $a_{3,1}$ and $a_{2,2}$. Lastly move $n$ as a new entry $a_{3,2}$. Two inversions $(n,a_{2,2})$ and $(n,a_{3,1})$ disappeared by moving $n$. Two inversions $(a_{2,2},a_{1,2})$ and $(a_{3,1},a_{1,2})$ are added back, keeping the total number of inversions unchanged. See Figure \ref{case4ab} (left).

\item Case 4b: If $a_{2,1}\prec a_{3,1}$, $a_{2,1}\nprec a_{1,2}$. Similar reasoning can be applied as Case 4a, we use the image in Case 4a and further switch row two and row three. See Figure \ref{case4ab} (right). Note that the inversion $(n,a_{2,2})$ is removed but $(a_{2,2},a_{1,2})$ is added.

                  \begin{figure}[h]
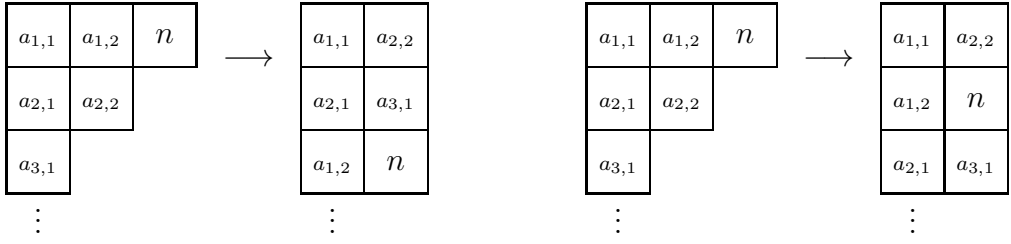

   \centering
   \ytableausetup{mathmode, boxsize=2em}
\begin{ytableau}
\scriptstyle a_{1,1} & \scriptstyle a_{1,2} & n \\
\scriptstyle a_{2,1} & \scriptstyle a_{2,2} \\
\scriptstyle a_{3,1} \\
\none[\vdots]  \\
\end{ytableau}\,\,\,
$\longrightarrow\,\,\,$
\begin{ytableau}
\scriptstyle a_{1,1} & \scriptstyle a_{2,2} \\
\scriptstyle a_{2,1} & \scriptstyle a_{3,1} \\
\scriptstyle a_{1,2} & n\\
\none[\vdots]  \\
\end{ytableau}
\qquad
\hspace{10mm}
\begin{ytableau}
\scriptstyle a_{1,1} & \scriptstyle a_{1,2} & n \\
\scriptstyle a_{2,1} & \scriptstyle a_{2,2} \\
\scriptstyle a_{3,1} \\
\none[\vdots]  \\
\end{ytableau}\,\,\,
$\longrightarrow\,\,\,$
\begin{ytableau}
\scriptstyle a_{1,1} & \scriptstyle a_{2,2} \\
\scriptstyle a_{1,2} & n \\
\scriptstyle a_{2,1} & \scriptstyle a_{3,1}\\
\none[\vdots]  \\
\end{ytableau}
\caption{The injective map $\psi$: Case 4a (left) and Case 4b (right).}
\label{case4ab}  
\end{figure}

\item Case 4c: If $a_{2,1}\nprec a_{3,1}$, and $a_{1,3}=n$. $a_{1,2}<a_{3,1}$ implies that $a_{2,1}\nprec a_{1,2}$. $a_{2,1}\nprec a_{3,1}$ while $a_{2,1}\prec a_{2,2}$ implies that $a_{3,1}<a_{2,2}$. In fact, it is not hard to see that $a_{1,1}=1$, $a_{2,1}=2$. We use the map in Figure \ref{case4cd} (left). Note that in this case the inversions $(n,a_{3,1})$, $(a_{2,2},a_{3,1})$ are removed but $(a_{3,1},a_{1,2})$, $(a_{3,1},2)$ are added.

                  \begin{figure}[h]
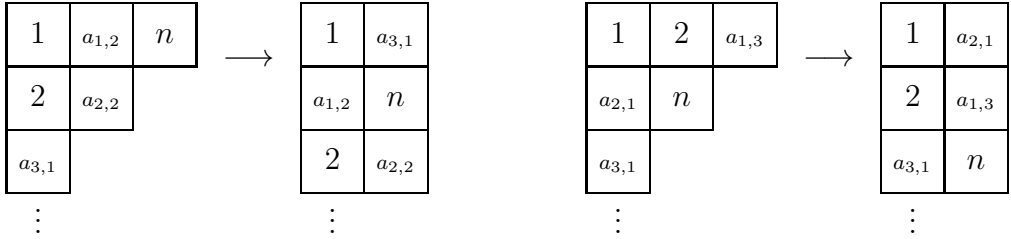

   \centering
   \ytableausetup{mathmode, boxsize=2em}
\begin{ytableau}
1 & \scriptstyle a_{1,2} & n \\
2 & \scriptstyle a_{2,2} \\
\scriptstyle a_{3,1} \\
\none[\vdots]  \\
\end{ytableau}\,\,\,
$\longrightarrow\,\,\,$
\begin{ytableau}
1 & \scriptstyle a_{3,1} \\
\scriptstyle a_{1,2} & n \\
2 & \scriptstyle a_{2,2}\\
\none[\vdots]  \\
\end{ytableau}
   \qquad
\hspace{10mm}
\begin{ytableau}
1 & 2 & \scriptstyle a_{1,3} \\
 \scriptstyle a_{2,1} & n \\
\scriptstyle a_{3,1} \\
\none[\vdots]  \\
\end{ytableau}\,\,\,
$\longrightarrow\,\,\,$
\begin{ytableau}
1 & \scriptstyle a_{2,1} \\
2 & \scriptstyle a_{1,3} \\
\scriptstyle a_{3,1} & n\\
\none[\vdots]  \\
\end{ytableau}
\caption{The injective map $\psi$: Case 4c (left) and Case 4d (right).}
\label{case4cd}  
\end{figure}

\item Case 4d(d'): If $a_{2,1}\nprec a_{3,1}$, and $a_{2,2}=n$. Then $a_{1,1}=1$, $a_{1,2}=2$. Further, $2,a_{2,1}\in S_2$ implies that $2<a_{2,1}$. Now if $a_{3,1}\prec n$, we use the map in Figrue \ref{case4cd} (right). The inversion $(a_{1,3},a_{2,1})$ is replaced by $(a_{2,1},2)$. If $a_{3,1}< n$, then we use Figure \ref{case4d'} (left) when $2\prec a_{3,1}$, and use Figure \ref{case4d'} (right) when $2\nprec a_{3,1}$. One can carefully check that the number of inversions are preserved under each case.
\end{itemize}

                  \begin{figure}[h]
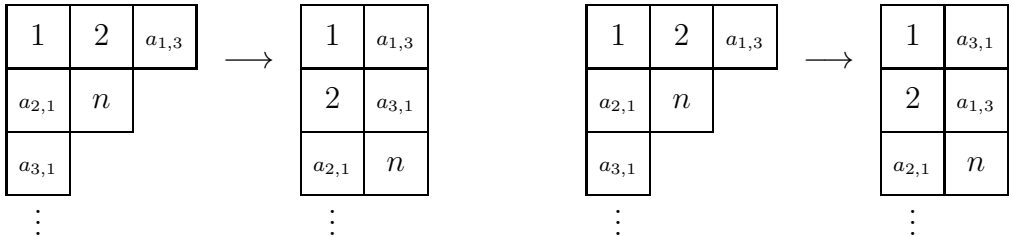

   \centering
   \ytableausetup{mathmode, boxsize=2em}
\begin{ytableau}
1 & 2 & \scriptstyle a_{1,3} \\
 \scriptstyle a_{2,1} & n \\
\scriptstyle a_{3,1} \\
\none[\vdots]  \\
\end{ytableau}\,\,\,
$\longrightarrow\,\,\,$
\begin{ytableau}
1 & \scriptstyle a_{1,3} \\
2 & \scriptstyle a_{3,1} \\
\scriptstyle a_{2,1} & n\\
\none[\vdots]  \\
\end{ytableau}
   \qquad
\hspace{10mm}
\begin{ytableau}
1 & 2 & \scriptstyle a_{1,3} \\
\scriptstyle a_{2,1} & n \\
\scriptstyle a_{3,1} \\
\none[\vdots]  \\
\end{ytableau}\,\,\,
$\longrightarrow\,\,\,$
\begin{ytableau}
1 & \scriptstyle a_{3,1} \\
2 & \scriptstyle a_{1,3} \\
\scriptstyle a_{2,1} & n\\
\none[\vdots]  \\
\end{ytableau}
\caption{The injective map $\psi$: Case 4d'.}
\label{case4d'}  
\end{figure}

\end{itemize}

It is straightforward to check that all images of $\psi$ discussed in above cases have no intersection. For example, the images in Case 2a, Case 4b and 4c are all distinct by observing that $a_{1,2}\prec n$ in the image of Case 2a, while $a_{2,2}\nprec n$ and $a_{3,1}\nprec n$ in the images of Case 4b and 4c respectively. Further, $a_{2,1}\prec a_{2,2}$ in the image of Case 4b, while $2\nprec a_{3,1}$ in the image of Case 4c. Similar analysis can be conducted to differentiate the images of all four cases.
\end{proof}

\begin{lemma} \label{lemman-2}
In Equation \ref{expansionepos1}, $[e_{(n-2,2)}]X_G(\mathbf{x},q)$ is in $\mathbb{N}[q]$.
\end{lemma}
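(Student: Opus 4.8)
The plan is to read off from Equation \eqref{expansionepos1} that $[e_{(n-2,2)}]X_G(\mathbf{x},q)=B_{2^21^{n-4}}-B_{2^31^{n-6}}-B_{3^11^{n-3}}$, and then to exhibit two inversion-preserving injections with \emph{disjoint} images, one out of the shape-$2^31^{n-6}$ tableaux and one out of the shape-$3^11^{n-3}$ tableaux, both landing inside the shape-$2^21^{n-4}$ tableaux. Once this is achieved, the displayed difference equals $\sum q^{\iv(T)}$ taken over the $P$-tableaux of shape $2^21^{n-4}$ that lie in neither image, which is manifestly in $\mathbb{N}[q]$. Throughout I would exploit Lemma \ref{lm4.2}: part $(c)$ bounds a length-two tableau by three $2$-chains and a length-three tableau by one $3$-chain plus one $2$-chain, which keeps every source set small, while parts $(a)$ and $(b)$ pin down the membership of the extremal entries in $S_1,S_2,S_3$ and typically force the top-of-chain entry to equal $n$.

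First I would build $\psi_1\colon \{T^-:\sh(T^-)=2^31^{n-6}\}\mapsto\{T^+:\sh(T^+)=2^21^{n-4}\}$ by dissolving the bottom $2$-row, exactly in the spirit of the maps $f$, $g$ of Theorem \ref{e_{(n-2l-1,l+1,l)}} and $\psi$ of Lemma \ref{lemman-3}: slide $a_{3,2}$ into the first column by locating the smallest $s$ below row $3$ with $a_{s,1}\nprec a_{3,2}$ and inserting $a_{3,2}$ just above $a_{s,1}$ (or appending it to the bottom of the first column when no such $s$ exists). Lemma \ref{lm1} makes this insertion point well defined and unique, and since the relative vertical order of every pair of incomparable entries is untouched, $\iv$ is preserved and the filling stays a valid $P$-tableau.

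Next I would build $\psi_2\colon \{T^-:\sh(T^-)=3^11^{n-3}\}\mapsto\{T^+:\sh(T^+)=2^21^{n-4}\}$ by folding the unique $3$-chain $a_{1,1}\prec a_{1,2}\prec a_{1,3}$ in the first row into a second $2$-row. The clean move sends $a_{1,3}$ down to position $(2,2)$ whenever $a_{2,1}\prec a_{1,3}$; when $a_{2,1}\nprec a_{1,3}$ a corrective swap in the first column (paired with a $\psi_1$-style reinsertion) is needed both to restore the column relations and to offset the single inversion that would otherwise be gained or lost, precisely as in Cases~2--4 of Lemma \ref{lemman-3}. Here Lemma \ref{lm4.2}$(a)$--$(b)$ is what collapses the branching: it forces $a_{1,3}=n$ (respectively $S_1=\{1\}$) in the relevant configurations, leaving only a handful of subcases to verify.

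Finally I would check $\im(\psi_1)\cap\im(\psi_2)=\varnothing$. The separation should come from the $S_i$-membership of the entry landing in the second column of the second row: a $\psi_1$-image retains the two original top $2$-rows of a $2^31^{n-6}$ tableau intact, whereas a $\psi_2$-image carries the former top-of-chain entry $a_{1,3}\in S_3$ (often $n$) in the second column, a configuration ruled out in $\im(\psi_1)$ by the chain-length bound of Lemma \ref{lm4.2}$(c)$. I expect the main obstacle to be exactly the $a_{2,1}\nprec a_{1,3}$ branch of $\psi_2$ together with this disjointness bookkeeping: those are the places where a swap must be introduced, and one must confirm simultaneously that it violates no column relation and leaves $\iv$ unchanged. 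As in the earlier proofs, Lemma \ref{lm4.2} is what renders these verifications finite and routine.
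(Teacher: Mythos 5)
Your skeleton matches the paper's proof: the coefficient is $B_{2^21^{n-4}}-B_{2^31^{n-6}}-B_{3^11^{n-3}}$, your $\psi_1$ is exactly the paper's map $\sigma_1$ (same insertion rule, same injectivity argument), and the paper likewise builds a second injection $\sigma_2$ out of the shape-$3^11^{n-3}$ tableaux and then proves the two images disjoint. The genuine gap is in your $\psi_2$, precisely at the spot you flag as the main obstacle, but the failure is in the branch you call ``clean,'' not the corrective one. If $a_{2,1}\prec a_{1,3}$ but the first column contains an entry $a_{s,1}$, $s\geq 4$, with $a_{3,1},\dots,a_{s-1,1}\prec a_{s,1}$ and $a_{s,1}\nprec a_{1,3}$, then moving $a_{1,3}$ to cell $(2,2)$ produces a tableau that \emph{does} lie in $\im(\psi_1)$: pull $a_{s,1}$ out of the first column to position $(3,2)$; the result is a valid $P$-tableau of shape $2^31^{n-6}$ (row condition $a_{3,1}\prec a_{s,1}$ and column condition $a_{s,1}\nprec a_{1,3}$ hold, and the spliced column stays valid by transitivity), and applying $\psi_1$ to it re-inserts $a_{s,1}$ at exactly the same spot. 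Concretely, for $n=10$ and $\mathbf{d}=(3,4,9,9,9,9,9,9,10)$ (so $S_1=\{1,2,3\}$, $S_2=\{4,\dots,9\}$, $S_3=\{10\}$), take $T_2$ with first row $1\prec 5\prec 10$ and first column $1,4,2,9,3,6,7,8$. Here $a_{2,1}=4\prec 10$, so your rule applies the clean move; but the image coincides with $\psi_1$ applied to the valid tableau with rows $(1,5)$, $(4,10)$, $(2,9)$ and first column $1,4,2,3,6,7,8$. So $\im(\psi_1)\cap\im(\psi_2)\neq\emptyset$ and the count no longer goes through.

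The same example refutes your proposed separating invariant: a $\psi_1$-image can perfectly well carry an $S_3$ element (indeed $n$ itself) at cell $(2,2)$, because nothing prevents the middle row of a $2^31^{n-6}$ tableau from being a $2$-chain ending at $n$; Lemma \ref{lm4.2}(c) only bounds the \emph{number} of chains, not where their tops lie. The paper's actual mechanism is different: $\sigma_2$ splits \emph{first} on whether there exists $s$ with $a_{3,1},\cdots,a_{s-1,1}\prec a_{s,1}\nprec a_{1,3}$ (its Case 1 versus Case 2), performs your clean move only in the nonexistence case (Case 1a, where the pull-back above is blocked by the column condition at $(3,2)$), and in the existence case uses maps that deliberately remove the offending first-column entry — moving $a_{4,1}$ or $n$ into the second column and pushing entries such as $a_{1,2}$ down into the first column (the paper's Cases 2a, 2a$'$, 2b, 2b$'$, which also rely on Lemma \ref{lm4.2}(a)--(b) to force $a_{1,1},a_{3,1}\in\{1,2\}$ or $a_{s,1}=n$). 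That existence dichotomy, not $S_i$-membership at $(2,2)$, is what makes the two images recognizably disjoint. To repair your proof you would need to adopt this finer case split (or some equivalent device) before the disjointness claim can hold.
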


\begin{proof}
We define injective maps \[\sigma_1: \{T^-\in\PT(G)|\sh(T^-)=2^31^{n-6}\}\mapsto \{T^+\in\PT(G)|\sh(T^+)=2^21^{n-4}\} \] and \[\sigma_2: \{T^-\in\PT(G)|\sh(T^-)=3^11^{n-3}\}\mapsto \{T^+\in\PT(G)|\sh(T^+)=2^21^{n-4}\}, \] then conclude that $\im(\sigma_1)\cap \im(\sigma_2)=\emptyset$.

First, let $T_1$ be a $P$-tableau in the domain of $\sigma_1$ and $a_{i,j}$ be the entry at the $i$th row and $j$th column of $T_1$. For $s\in [4,n-3]$, find the smallest $s$ such that $a_{s,1}\nprec a_{3,2}$ and insert $a_{3,2}$ above $a_{s,1}$. Move $a_{3,2}$ to the bottom of the first column if such $a_{s,1}$ does not exist. See Figure \ref{coeff n-2,2}. To see that $\sigma_1$ is injective, we can show that there does not exist an entry $a_{s,1}$ such that $a_{3,1},\cdots,a_{s-1,1}\prec a_{s,1}\prec a_{3,2}$. Otherwise we have a 3-chain for which $a_{3,1}\in \{1,2\}$. Then if $a_{3,2}=n$, by Lemma \ref{lm4.2}(c), we do not have enough poset relations to fill out another two 2-chains in first two rows of $T_1$. If $a_{3,2}\neq n$, then $S_1=\{1\}$ by Lemma \ref{lm4.2}(a), that is, $a_{3,1}=1$. It contradicts to the fact that $a_{3,1}\nprec a_{2,1}$. The inverse map of $\sigma_1$ is natural. For $s\in [4,n-2]$, we can always find the unique index $s$ such that $a_{3,1}, \cdots, a_{s-1,1}\prec a_{s,1}$ and move $a_{s,1}$ back as the entry $a_{3,2}$.

                  \begin{figure}[h]
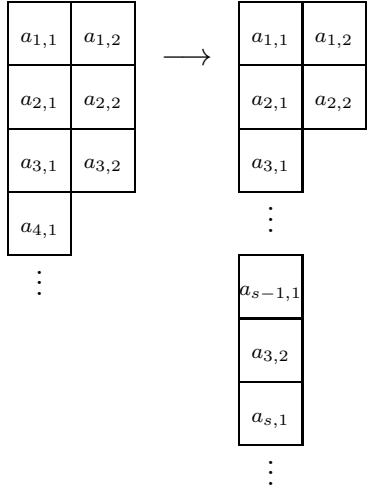

   \centering
   \ytableausetup{mathmode, boxsize=2em}
\begin{ytableau}
\scriptstyle a_{1,1} & \scriptstyle a_{1,2} \\
\scriptstyle a_{2,1} & \scriptstyle a_{2,2} \\
\scriptstyle a_{3,1} & \scriptstyle a_{3,2}\\
\scriptstyle a_{4,1} \\
\none[\vdots]  \\
\end{ytableau}\,\,\,
$\longrightarrow\,\,\,$
\begin{ytableau}
\scriptstyle a_{1,1} & \scriptstyle a_{1,2} \\
\scriptstyle a_{2,1} & \scriptstyle a_{2,2} \\
\scriptstyle a_{3,1} \\
\none[\vdots]  \\
\scriptstyle a_{s-1,1} \\
\scriptstyle a_{3,2} \\
\scriptstyle a_{s,1} \\
\none[\vdots]  \\
\end{ytableau}
\qquad
\caption{The injective map $\sigma_1$.}
\label{coeff n-2,2}  
\end{figure}

Second, let $T_2$ be a $P$-tableau in the domain of $\sigma_2$ and $a_{i,j}$ be the entry at the $i$th row and $j$th column of $T_2$. We define $\sigma_2$ according to following cases. The reason why we consider for these two cases will become clear after we describe the map.

\begin{itemize}
    \item Case 1: There does not exist an index $s$ for $s\in [4,n-2]$ such that $a_{3,1},a_{4,1}\cdots,a_{s-1,1}\prec a_{s,1}\nprec a_{1,3}$. 

    \begin{itemize}
    \item Case 1a: If $a_{2,1}\prec a_{1,3}$, we simply move $a_{1,3}$ below $a_{1,2}$. See Figure \ref{coeff n-2,2,case1ab} (left). 
    
    \item Case 1b: If $a_{2,1}\nprec a_{1,3}$, with knowing that $a_{1,2}\prec a_{1,3}$, we have either $a_{1,2}< a_{2,1}$ or $a_{1,2}\prec a_{2,1}$. When $a_{1,2}< a_{2,1}$, we adjust the map above by swapping $a_{2,1}$ and $a_{1,2}$. The inversion $(a_{1,3},a_{2,1})$ is replaced by $(a_{2,1},a_{1,2})$. See Figure \ref{coeff n-2,2,case1ab} (right). When $a_{1,2}\prec a_{2,1}$, we swap $a_{2,1}$ and $a_{1,3}$ in addition to Figure \ref{coeff n-2,2,case1ab} (right), preserving all inversions. See Figure \ref{coeff n-2,2,case1abc}. \\
    \end{itemize}
    
\begin{figure}[h]
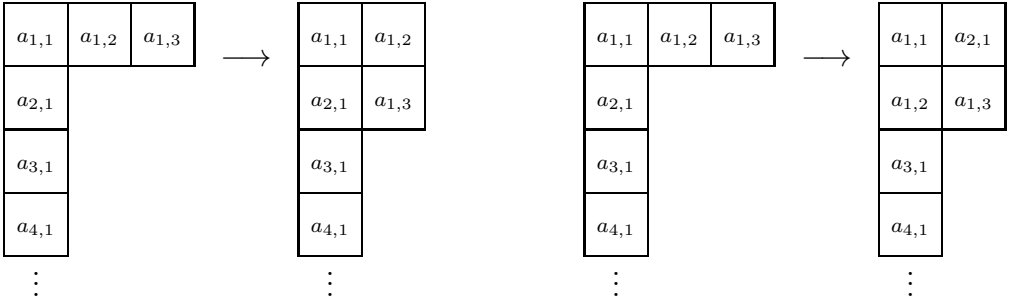

   \centering
   \ytableausetup{mathmode, boxsize=2em}
\begin{ytableau}
\scriptstyle a_{1,1} & \scriptstyle a_{1,2}& \scriptstyle a_{1,3} \\
\scriptstyle a_{2,1}\\
\scriptstyle a_{3,1} \\
\scriptstyle a_{4,1} \\
\none[\vdots]  \\
\end{ytableau}\,\,\,
$\longrightarrow\,\,\,$
\begin{ytableau}
\scriptstyle a_{1,1} & \scriptstyle a_{1,2} \\
\scriptstyle a_{2,1} & \scriptstyle a_{1,3} \\
\scriptstyle a_{3,1} \\
\scriptstyle a_{4,1} \\
\none[\vdots]  \\
\end{ytableau}
\qquad
\hspace{10mm}
\begin{ytableau}
\scriptstyle a_{1,1} & \scriptstyle a_{1,2}& \scriptstyle a_{1,3} \\
\scriptstyle a_{2,1}\\
\scriptstyle a_{3,1} \\
\scriptstyle a_{4,1} \\
\none[\vdots]  \\
\end{ytableau}\,\,\,
$\longrightarrow\,\,\,$
\begin{ytableau}
\scriptstyle a_{1,1} & \scriptstyle a_{2,1} \\
\scriptstyle a_{1,2} & \scriptstyle a_{1,3} \\
\scriptstyle a_{3,1} \\
\scriptstyle a_{4,1} \\
\none[\vdots]  \\
\end{ytableau}
\qquad
\caption{The injective map $\sigma_2:$ Case 1a (left) and Case 1b (right).}
\label{coeff n-2,2,case1ab}  
\end{figure} 

\begin{figure}[h]
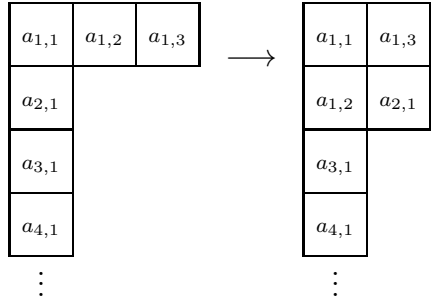

   \centering
   \ytableausetup{mathmode, boxsize=2em}
\begin{ytableau}
\scriptstyle a_{1,1} & \scriptstyle a_{1,2}& \scriptstyle a_{1,3} \\
\scriptstyle a_{2,1}\\
\scriptstyle a_{3,1} \\
\scriptstyle a_{4,1} \\
\none[\vdots]  \\
\end{ytableau}\,\,\,
$\longrightarrow\,\,\,$
\begin{ytableau}
\scriptstyle a_{1,1} & \scriptstyle a_{1,3} \\
\scriptstyle a_{1,2} & \scriptstyle a_{2,1} \\
\scriptstyle a_{3,1} \\
\scriptstyle a_{4,1} \\
\none[\vdots]  \\
\end{ytableau}
\qquad
\caption{The injective map $\sigma_2:$ Case 1b'.}
\label{coeff n-2,2,case1abc}  
\end{figure}   

     \item Case 2: There exists an index $s$ for $s\in [4,n-2]$ such that $a_{3,1},\cdots,a_{s-1,1}\prec a_{s,1}\nprec a_{1,3}$.

     \begin{itemize}
     \item Case $2a(a')$: If $a_{1,3}=n$. It is necessary that the index $s=4$ such that $a_{3,1}\prec a_{4,1}\nprec n$, and then $a_{1,1}, a_{3,1}\in \{1,2\}$. When $a_{3,1}\prec a_{1,2}$, we define the map as shown in Figure \ref{coeff n-2,2,case2ab} (left). All changes of inversions can be tracked with the following poset relations. First, $a_{1,2}\prec n$ and $a_{4,1}\nprec n$ implies $a_{1,2}<a_{4,1}$. Second, $a_{3,1}\prec a_{4,1}$ and $a_{3,1}\nprec a_{2,1}$ implies $a_{2,1}<a_{4,1}$. Lastly, $a_{3,1}\prec a_{1,2}$ and $a_{3,1}\nprec a_{2,1}$ implies that $a_{2,1}<a_{1,2}$. With these relations, one can easily verify that the total number of inversions remains unvaried. When $a_{3,1}\nprec a_{1,2}$, we define the map as shown in Figrue \ref{coeff n-2,2,case2ab} (right). Consequently, the inversions $(n,a_{4,1})$ and $(a_{2,1},a_{3,1})$ are replaced with $(a_{4,1},a_{1,2})$ and $(a_{4,1},a_{2,1})$.
     
    \begin{figure}[h]
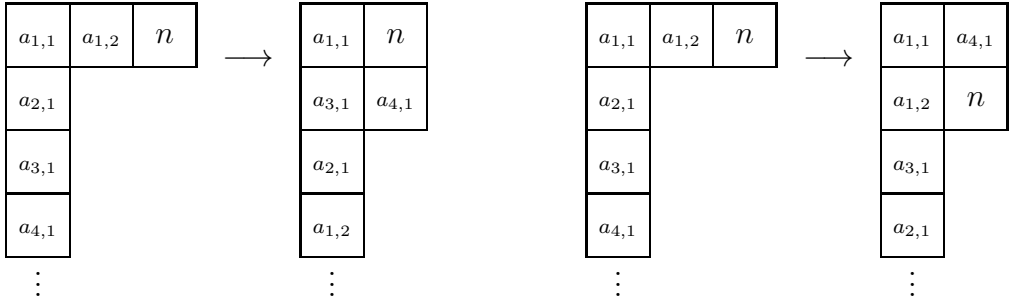

   \centering
   \ytableausetup{mathmode, boxsize=2em}
\begin{ytableau}
\scriptstyle a_{1,1} & \scriptstyle a_{1,2} & n \\
\scriptstyle a_{2,1}\\
\scriptstyle a_{3,1} \\
\scriptstyle a_{4,1} \\
\none[\vdots]  \\
\end{ytableau}\,\,\,
$\longrightarrow\,\,\,$
\begin{ytableau}
\scriptstyle a_{1,1} & n \\
\scriptstyle a_{3,1} & \scriptstyle a_{4,1} \\
\scriptstyle a_{2,1} \\
\scriptstyle a_{1,2} \\
\none[\vdots]  \\
\end{ytableau}
\qquad
\hspace{10mm}
\begin{ytableau}
\scriptstyle a_{1,1} & \scriptstyle a_{1,2}& n \\
\scriptstyle a_{2,1}\\
\scriptstyle a_{3,1} \\
\scriptstyle a_{4,1} \\
\none[\vdots]  \\
\end{ytableau}\,\,\,
$\longrightarrow\,\,\,$
\begin{ytableau}
\scriptstyle a_{1,1} & \scriptstyle a_{4,1} \\
\scriptstyle a_{1,2} & n \\
\scriptstyle a_{3,1} \\
\scriptstyle a_{2,1} \\
\none[\vdots]  \\
\end{ytableau}
\qquad
\caption{The injective map $\sigma_2:$ Case $2a$ (left) and Case $2a'$ (right).}
\label{coeff n-2,2,case2ab}  
\end{figure}

\item Case $2b(b')$: If $a_{1,3}\neq n$. By Lemma \ref{lm4.2}(a), we must have $a_{1,1}=1$, $a_{1,2}=2$ and $2\prec \{S_3\}$. Further, the index $s$ exists such that $a_{s,1}=n$. Thus valid $P$-tableaux are very limited. When $a_{2,1}\prec n$, we define the map as shown in Figure \ref{coeff n-2,2,case2de} (left). When $a_{2,1}\nprec n$, we use Figure \ref{coeff n-2,2,case2de} (right), where the inversion $(a_{2,1},a_{3,1})$ is replaced by $(n,a_{2,1})$.
\end{itemize}   

One can easily verify the injectivity of this map and moreover, because of the nonexistence condition of the index $s$ under Case 1, and the removal of the entry $a_{s,1}$ in the map under Case 2, it is straightforward to compare all images of these two cases with the image in Figure \ref{coeff n-2,2} and conclude that they do not intersect. Namely, $\im(\sigma_1)\cap \im(\sigma_2)=\emptyset$.

    \begin{figure}[h]
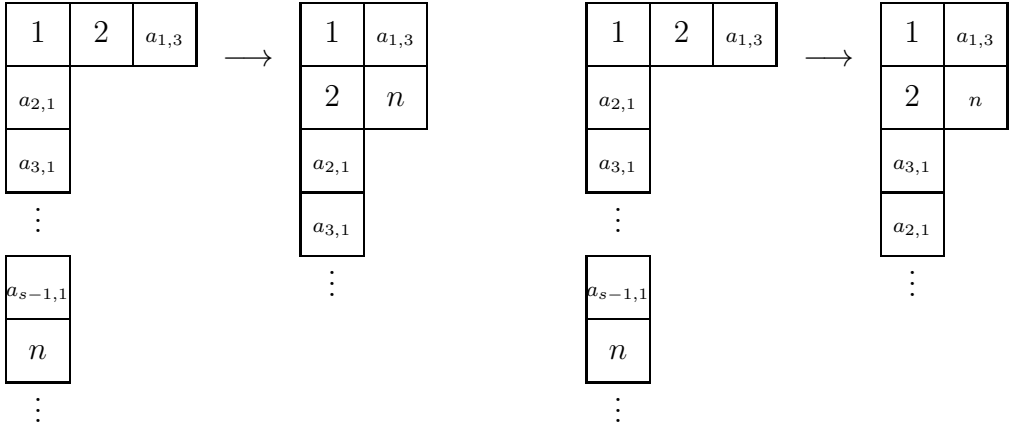

   \centering
   \ytableausetup{mathmode, boxsize=2em}
\begin{ytableau}
1 & 2 & \scriptstyle a_{1,3}\\
\scriptstyle a_{2,1}\\
\scriptstyle a_{3,1} \\
\none[\vdots]  \\
\scriptstyle a_{s-1,1} \\
n\\
\none[\vdots]
\end{ytableau}\,\,\,
$\longrightarrow\,\,\,$
\begin{ytableau}
1 & \scriptstyle a_{1,3}\\
2 & n \\
\scriptstyle a_{2,1} \\
\scriptstyle a_{3,1} \\
\none[\vdots]  \\
\end{ytableau}
\qquad
\hspace{10mm}
\begin{ytableau}
1 & 2 & \scriptstyle a_{1,3}\\
\scriptstyle a_{2,1}\\
\scriptstyle a_{3,1} \\
\none[\vdots]  \\
\scriptstyle a_{s-1,1} \\
n\\
\none[\vdots]
\end{ytableau}\,\,\,
$\longrightarrow\,\,\,$
\begin{ytableau}
1 & \scriptstyle a_{1,3} \\
2 & \scriptstyle n \\
\scriptstyle a_{3,1} \\
\scriptstyle a_{2,1} \\
\none[\vdots]  \\
\end{ytableau}
\qquad
\caption{The injective map $\sigma_2:$ Case $2b$ (left) and Case $2b'$ (right).}  
\label{coeff n-2,2,case2de}
\end{figure} 
\end{itemize}
\end{proof}

\begin{proof}[Proof of Theorem \ref{epos1}]
The result follows immediately combining Theorem \ref{e_{(n-2l,l,l)}}, Theorem \ref{e_{(n-2l-1,l+1,l)}}, Lemma \ref{lemman-3} and Lemma \ref{lemman-2}.
\end{proof}

\begin{corollary} \label{Corepos}
Let $P(\bf{d})$ be a naturual unit interval order on $[n]$, for which the associated Dyck path $\mathbf{d}=(d_1, n-2,\cdots,n-2,n-1,\cdots,n-1,n,\cdots,n)$, $|\bf{m}|=3$ and $G=\inc(P)$. Then the chromatic symmetric function $X_G(\textbf{x},q)$ is $e$-positive.
\end{corollary}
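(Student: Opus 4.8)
The plan is to deduce this corollary from Theorem \ref{epos1} by a transpose (anti-diagonal reflection) argument, exactly as signalled by the phrase ``and their transposes'' in the opening of this section. The two observations to assemble are: the family of Dyck paths in this corollary is precisely the family of \emph{transposes} of the paths treated in Theorem \ref{epos1}, and transposing a Dyck path leaves the chromatic quasisymmetric function and the bounce number unchanged.

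First I would make the path correspondence explicit through conjugate partitions. For a Dyck path $\mathbf{d}=(d_1,\dots,d_{n-1})$ set $\mu_i=n-d_i$, so that $\mu=(\mu_1\ge\mu_2\ge\cdots)$ is a partition and the region $\tau$ above $\mathbf{d}$ is its conjugate, $\tau=\mu'$. Reflecting the $n\times n$ board across the anti-diagonal sends $\mathbf{d}$ to the Dyck path $\mathbf{d}^{*}$ whose associated partition is $\mu^{*}=\mu'$; this reflection is an involution and preserves the bounce number, so $|\mathbf{m}|=3$ is retained. A direct computation of conjugates then matches the two families: if $\mathbf{d}=(d_1,n-2,\dots,n-2,n-1,\dots,n-1,n,\dots,n)$ is as in the corollary, then $\mu=(n-d_1,\,2^{s},\,1^{t},\,0^{u})$, whose conjugate is $\mu'=(1+s+t,\,1+s,\,1^{\,n-d_1-2})$, and hence $\mathbf{d}^{*}=(n-1-s-t,\;n-1-s,\;n-1,\dots,n-1,\;n,\dots,n)$ is of the form treated in Theorem \ref{epos1}. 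Since transpose is an involution, every path of Theorem \ref{epos1} conversely transposes to a corollary path, so $\mathbf{d}\leftrightarrow\mathbf{d}^{*}$ is a bijection between the two families.

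Next I would establish the transpose invariance $X_{\inc(P(\mathbf{d}))}(\mathbf{x},q)=X_{\inc(P(\mathbf{d}^{*}))}(\mathbf{x},q)$. The relabelling $i\mapsto n+1-i$ carries the relation $i\prec_{\mathbf{d}}j$ (a cell above $\mathbf{d}$) to $(n+1-j)\prec_{\mathbf{d}^{*}}(n+1-i)$ (a cell above $\mathbf{d}^{*}$), hence is an isomorphism $\inc(P(\mathbf{d}))\to\inc(P(\mathbf{d}^{*}))$ preserving incomparability; on an edge $\{i,j\}$ with $i<j$ it reverses the order of the endpoints, so it interchanges ascents and descents of the corresponding proper colouring $\kappa^{*}=\kappa\circ(n+1-(\cdot))$. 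As a proper colouring makes every edge either an ascent or a descent, $\text{asc}(\kappa^{*})=|E|-\text{asc}(\kappa)$, giving $X_{\inc(P(\mathbf{d}^{*}))}(\mathbf{x},q)=q^{|E|}X_{\inc(P(\mathbf{d}))}(\mathbf{x},q^{-1})$ with $|E|$ the common edge count. Combined with the palindromicity $X_{\inc(P(\mathbf{d}))}(\mathbf{x},q)=q^{|E|}X_{\inc(P(\mathbf{d}))}(\mathbf{x},q^{-1})$ of Shareshian--Wachs, this yields the desired equality.

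Finally, given a corollary path $\mathbf{d}$, its transpose $\mathbf{d}^{*}$ is a path of Theorem \ref{epos1} with $|\mathbf{m}|=3$, so $X_{\inc(P(\mathbf{d}^{*}))}(\mathbf{x},q)$ is $e$-positive; by the invariance just established, $X_{\inc(P(\mathbf{d}))}(\mathbf{x},q)=X_{\inc(P(\mathbf{d}^{*}))}(\mathbf{x},q)$ is $e$-positive as well. I expect the main obstacle to be the transpose-invariance step: one must argue carefully that the anti-diagonal reflection realises the conjugation $\mu\mapsto\mu'$ and that the induced graph isomorphism genuinely swaps ascents with descents, the return from $q^{|E|}X(\mathbf{x},q^{-1})$ to $X(\mathbf{x},q)$ resting squarely on palindromicity. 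The partition bookkeeping in the second paragraph, including the degenerate cases where the block of $(n-2)$'s or of $(n-1)$'s is empty (so that $s=0$ or $t=0$), also needs checking but is routine.
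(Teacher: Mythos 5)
Your proposal is correct, and it follows the same overall route as the paper: identify the corollary's Dyck paths as the anti-diagonal reflections (conjugate partitions) of the paths in Theorem \ref{epos1}, and transfer $e$-positivity through the relabeling $i\mapsto n+1-i$. Your explicit bookkeeping $\mu=(n-d_1,2^s,1^t,0^u)\mapsto\mu'=(1+s+t,1+s,1^{\,n-d_1-2})$ is exactly the correspondence the paper leaves implicit. Where you genuinely diverge is the invariance step, and there your version is the more careful one. The paper asserts that under the relabeling ``all ascents in each coloring are preserved'' and concludes $X_G(\mathbf{x},q)=X_{G_{\text{tr}}}(\mathbf{x},q)$ directly; that assertion is false as stated. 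Since the relabeling reverses the order of the vertex labels, an edge $(i,j)$ with $i<j$ and $\kappa(i)<\kappa(j)$ is carried to an edge whose smaller-labelled endpoint receives the larger color (e.g.\ for $\mathbf{d}=(2,4,4)$, $\mathbf{d}_{\text{tr}}=(3,3,4)$ on $n=4$, the coloring $(1,2,3,1)$ of $G_{\text{tr}}$ has $3$ ascents but its relabeled image has $1$). So the relabeling only yields $X_{G_{\text{tr}}}(\mathbf{x},q)=q^{|E|}X_G(\mathbf{x},q^{-1})$, and one must then invoke the Shareshian--Wachs palindromicity $X_G(\mathbf{x},q)=q^{|E|}X_G(\mathbf{x},q^{-1})$ --- which they deduce from symmetry of $X_G$ in the $\mathbf{x}$-variables together with color complementation, so there is no circularity --- to recover the equality. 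Your proof supplies precisely this missing step, so it repairs a real defect in the paper's justification while reaching the same (true) conclusion. The only loose end on your side is the unproved assertion that reflection preserves the bounce number; this is easy (the bounce number equals the length of the longest chain of $P(\mathbf{d})$, and reflection produces a relabeled copy of the dual poset, which has the same chains), but a sentence to that effect would make the argument self-contained.
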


\begin{proof}
The result follows immediately from Theorem \ref{epos1} with the fact that the chromatic symmetric functions of two unit interval orders are equal if one's associated Young diagram is the transpose of the other's. To see this, let $\mathbf{d}$ and $\mathbf{d}_{\text{tr}}$ be the unit interval orders obtained from the associated Young diagram $\tau$ and its transpose $\tau_{\text{tr}}$. Let $G$ and $G_{\text{tr}}$ be their corresponding incomparability graphs, respectively. Then we can relabel the vertices $i$ of $G_{\text{tr}}$ as $n+1-i$ to see $G\cong G_{\text{tr}}$. Then all ascents in each coloring are preserved and $X_G(\textbf{x},q)=X_{G_{\text{tr}}}(\textbf{x},q)$. Figure \ref{exthm2} shows an example of this class of natural unit interval orders.
\end{proof}

\begin{figure}[h]
    \centering
    \vspace{3mm}
\begin{tikzpicture}[every node/.style={minimum size=.3cm-\pgflinewidth, outer sep=0pt}]
\draw[step=0.3cm] (-1.5001,-1.5001) grid (1.5,1.5);
\draw (-1.5,-1.5) -- (1.5,1.5);
\draw [red,very thick] (-1.5,-1.5)--(-1.5,-0.3)--(-1.2,-0.3)--(-1.2,.9)--(0.3,.9)--(0.3,1.2)--(.9,1.2)--(.9,1.5)--(1.5,1.5);
\draw [green,ultra thick, dashed] (-1.5,-1.5)--(-1.5,-0.3)--(-0.3,-0.3)--(-0.3,.9)--(.9,.9)--(.9,1.5)--(1.5,1.5);
    \node[fill=yellow!30] at (-1.35,1.35) {};
    \node[fill=yellow!30] at (-1.05,1.35) {};
    \node[fill=yellow!30] at (-.75,1.35) {};
    \node[fill=yellow!30] at (-0.45,1.35) {};
    \node[fill=yellow!30] at (-0.15,1.35) {};
    \node[fill=yellow!30] at (0.15,1.35) {};
    \node[fill=yellow!30] at (0.45,1.35) {};
    \node[fill=yellow!30] at (.75,1.35) {};
    \node[fill=yellow!30] at (-1.35,1.05) {};
    \node[fill=yellow!30] at (-1.05,1.05) {};
    \node[fill=yellow!30] at (-.75,1.05) {};
    \node[fill=yellow!30] at (-0.45,1.05) {};
    \node[fill=yellow!30] at (-0.15,1.05) {};
    \node[fill=yellow!30] at (0.15,1.05) {};
    \node[fill=yellow!30] at (-1.35,.75) {};
    \node[fill=yellow!30] at (-1.35,0.45) {};
    \node[fill=yellow!30] at (-1.35,0.15) {};
    \node[fill=yellow!30] at (-1.35,-0.15) {};
\end{tikzpicture}
\qquad
\caption{An example of the class of natural unit interval orders in Corollary \ref{Corepos}.}
\label{exthm2}
\end{figure}

\section{Discussions}
\subsection{Proof of $e$-positivity of chromatic symmetric function $X_G(\mathbf{x},q)$ for Dyck paths of bounce number three} \label{open1}

We showed that $[e_{(n-2l,l,l)}]X_G(\mathbf{x},q)$ and $[e_{(n-2l-1,l+1,l)}]X_G(\mathbf{x},q)$ for $l\in[0,\min\{a,b,c\}]$ are positive. We have encountered difficulties proving $e$-positivity of the rest of the coefficients. According to Equation \ref{expansion}, we are looking for injections from the set of $P$-tableaux $\{T^-\in\PT|\sh(T^-)=3^l2^{j+1}1^{n-3l-2j-2}\,\, \text{or}\,\, 3^{l+1}2^{j-2}1^{n-3l-2j+1}\}$ to the set $P$-tableaux $\{T^+\in\PT|\sh(T^+)=3^l2^j1^{n-3l-2j}\,\, \text{or}\,\, 3^{l+1}2^j1^{n-3l-2j-3}\}$. We essentially need an algorithm allowing us to move cells among multiple columns of a $P$-tableaux which preserves the number of inversions.

\subsection{For a chromatic symmetric function $X_G(\mathbf{x},q)$ for Dyck path of an arbitrary bounce number, $[e_{(n-i,i)}]X_G(\mathbf{x},q)$ are positive for every $i$}

Using the inverse Kostka numbers, we are actually able to expand a chromatic symmetric function for Dyck path $\mathbf{d}$ of an arbitrary bounce number in $e$ basis. Thus developing an algorithm more general than the one described in Section \ref{open1} would be helpful to prove Conjecture \ref{SWepositive}. In particular, finding $[e_{\lambda}]X_G(\mathbf{x},q)$ for some $\lambda$ is equivalent finding the tiling of all possible Young diagrams with type $\lambda$. It would be interesting to investigate $[e_{(n-i,i)}]X_G(\mathbf{x},q)$ for every $i$, in which case the tiling of a special rim hook tabloid would always consists of two special rim hooks of length $i$ and $n-i$ (see Figure \ref{2ribbons}). When these two special rim hooks have opposite signs, we obtain a negative special rim hook tabloid. Thus we are looking for an injection from a negative special rim hook tabloid to a positive one by adjusting cells of the Young diagram such that the two special rim hooks agree with the signs.  

\begin{figure}[h]
    \centering
    \vspace{3mm}
\begin{tikzpicture}
\draw[black] (0,0) -- (.3,0) -- (.3,2) -- (2.5,2) -- (2.5,2.3)-- (0,2.3) -- cycle;
\draw[black] (0.3,0.6) -- (.6,.6) -- (.6,1.7) -- (1.9,1.7) -- (1.9,2);
\draw[blue, very thick] (0.15,0.15) -- (.15,.75) -- (.45,.75) -- (.45,1.85) -- (1.75,1.85)-- (1.75,2.15) -- (2.35,2.15);
\draw[blue, very thick] (0.15,0.9) -- (.15,2.15) -- (1.6,2.15);
\end{tikzpicture}
\qquad
\caption{A special rim hook tabloid of type $(n-i,i)$.}
\label{2ribbons}
\end{figure}

\section{Acknowledgements} 

The author would like to thank her advisor, Prof. Greta Panova, for all her guidance and helpful discussions on this project.

\printbibliography
\end{document}